\numberwithin{equation}{section}
\newtheorem{theorem}{Theorem}[section]
\newtheorem{proposition}[theorem]{Proposition}
\newtheorem{lemma}[theorem]{Lemma}
\newtheorem{corollary}[theorem]{Corollary}
\theoremstyle{definition}
\newtheorem{example}[theorem]{Example}
\newtheorem{definition}[theorem]{Definition}
\newtheorem{remark}[theorem]{Remark}
\newcommand{\cA}{\mbox{${\cal A}$}}
\newcommand{\cO}{\mbox{${\cal O}$}}
\title{\textbf{Skew PBW extensions over weak symmetric and $(\Sigma,\Delta)$-weak symmetric rings}}
\author{Armando Reyes\\ Department of Mathematics\\ National University of Colombia, Bogot\'a \\ mareyesv@unal.edu.co \\  \\ H\'ector Su\'arez\\ School of Mathematics and Statistics \\ Pedagogical and Technological University of Colombia, Tunja \\ hector.suarez@uptc.edu.co}
\date{}
\begin{document}
\maketitle
\begin{abstract}
\noindent In this paper we study skew Poincar\'e-Birkhoff-Witt extensions over weak symmetric and $(\Sigma,\Delta)$-weak symmetry rings. Since these extensions generalize Ore extensions of injective type and another noncommutative rings of polynomial type, we unify and extend several results in the literature concerning the property of being symmetry. Under adequate conditions, we transfer the property of being weak symmetric or $(\Sigma,\Delta)$-weak symmetric from a ring of coefficients to a skew PBW extension over this ring. We illustrate our results with remarkable examples of algebras appearing in noncommutative algebraic geometry and  theoretical physics.

\bigskip

\noindent \textit{Key words and phrases.} Symmetric ring; Noncommutative ring; skew PBW extension.

\bigskip

\noindent 2010 \textit{Mathematics Subject Classification:} 16S36, 16S37, 16S38, 16S99.
\bigskip

\end{abstract}

\section{Introduction}\label{section}

A ring $B$ is said to be {\em Armendariz} (the term was introduced by Rege and Chhawchharia \cite{RegeChhawchharia1997}), if whenever polynomials $f(x)=\sum_{i=0}^{s}a_ix^{i}$ and $g(x)=\sum_{j=0}^{t} b_jx^{j}$ in $B[x]$ satisfy $f(x)g(x)=0$, then $a_ib_j$, for all $i, j$. In the context of the well-known Ore extensions, for an endomorphism $\alpha$ and a $\alpha$-derivation $\delta$ of $B$, Moussavi and Hashemi \cite{MoussaviHashemi2005} defined $B$ to be $(\alpha,\delta)$-skew Armendariz, if for $f(x)=\sum_{i=0}^{s}a_ix^{i}$ and $g(x)=\sum_{j=0}^{t} b_jx^{j}$ in $B[x;\alpha,\delta]$ satisfy $f(x)g(x)=0$, then $a_ix^{i}b_jx^{j} = 0$, for each $i, j$. On the other hand, a ring $B$ is called (i) {\em reduced}, if $a^{2}=0\Rightarrow a=0$, for all $a\in B$; (ii) (Lambek \cite{Lambek1971}) {\em symmetric}, if $abc = 0\Rightarrow acb=0$, for all $a,b,c,\in B$; (iii) {\em reversible}, if $ab=0 \Rightarrow ba=0$, for all $a, b\in B$;  (iv) {\em semicommutative}, if $ab=0 \Rightarrow aBb=0$, for all $a, b\in B$ (Bell \cite{Bell1970} defined the following: a ring $B$ is said to satisfy the $IFP$, {\em insertion of factors property}, if $r_B(a)$ is an ideal for all $a\in B$. Sometimes, a semicommutative ring is also called a {a ring with $IFP$ property). It is known that the implications {\em reduced} $\Rightarrow$ {\em symmetric} $\Rightarrow$ {\em reversible} $\Rightarrow$ {\em semicommutative} hold but, in general, the converse of each one of these implications is false, see Marks \cite{Marks2003} for a detailed discussion.\\

Of course, commutative rings are symmetric. Reduced rings are symmetric as we can appreciated in Anderson and Camillo \cite{AndersonCamillo1998}. Nevertheless, there are many nonreduced commutative (so symmetric) rings. Now, if $B$ is Armendariz, then the classical polynomial ring $B[x]$ over $B$ is symmetric if and only if $B$ is symmetric (Huh et al. \cite{HuhLeeSmoktunowicz2002} and Kim and Lee \cite{KimLee2003}. In the noncommutative case, there are results concerning the property of being symmetric over $(\alpha,\delta)$-skew Armendariz rings, see \cite{OuyangChen2010}. Precisely, this was the motivation for Ouyang and Chen who in \cite{OuyangChen2010} defined weak symmetric rings and weak $(\alpha,\delta)$-symmetric rings for the context of Ore extensions $B[x;\alpha,\delta]$, where $B$ is an associate ring with unity. They proved that for every $(\alpha,\delta)$-compatible and reversible ring $B$ (following Annin \cite{Annin2002}, for an endomorphism $\alpha$ and an $\alpha$-derivation $\delta$ of $B$, $B$ is called $\alpha$-compatible, if for every $a,b \in B$, we have $ab=0$ if and only if $a\alpha(b)=0$ (necessarily, the endomorphism $\alpha$ is injective), and $B$ is called to be $\delta$-{\em compatible} if for each $a, b\in B$, $ab=0 \Rightarrow a\delta(b)=0$; if $B$ is both $\alpha$-compatible and $\delta$-compatible, $B$ is called $(\alpha,\delta)$-{\em compatible}), $B$ is weak symmetric if and only if $B[x;\alpha,\delta]$ is weak symmetric, and for every semicommutative ring $B$, $B$ is weak $(\alpha,\delta)$-symmetric if and only if $B[x]$ is weak $(\overline{\alpha}, \overline{\delta})$-symmetric, where $\overline{\alpha}$ and $\overline{\delta}$ are the extended maps of $\alpha$ and $\delta$ over $B[x]$, respectively. The results presented in \cite{OuyangChen2010} generalize those corresponding for .\\

Having in mind all above results and with the aim of establishing similar results to the more general noncommutative rings than (iterated) Ore extensions, in this paper we are interested in the family of rings known as {\em skew Poincar\'e-Birkhoff-Witt extensions} which were introduced by Gallego and Lezama \cite{LezamaGallego2011}. Besides of Ore extensions, skew PBW extensions generalize several families of noncommutative rings (see  \cite{ReyesPhD2013, ReyesYesica2018} for a list of noncommutative rings which are skew PBW extensions but not iterated Ore extensions) and include as particular rings different examples of remarkable algebras appearing in representation theory, Hopf algebras, quantum groups, noncommutative algebraic geometry and another algebras of interest in the context of mathematical physics. Let us mention some of these algebras (see \cite{LezamaReyes2014, ReyesPhD2013} for a detailed reference of every one of these families): (i) universal enveloping algebras of finite dimensional Lie algebras; (ii) PBW extensions introduced by Bell and Goodearl; (iii) almost normalizing extensions defined by McConnell and Robson; (iv) sol\-va\-ble polynomial rings introduced by Kandri-Rody and Weispfenning; (v) diffusion algebras studied by Isaev, Pyatov, and Rittenberg; (vi) 3-dimensional skew polynomial algebras introduced by Bell and Smith; (vii) the regular graded algebras studied by Kirkman, Kuzmanovich, and Zhang, and other noncommutative algebras of polynomial type. The importance of skew PBW extensions is that the coefficients do not necessarily commute with the variables, and these coefficients are not necessarily elements of fields (see Definition \ref {gpbwextension}). In fact, the skew PBW extensions contain well-known groups of algebras such as some types of $G$-algebras introduced by Apel and some PBW algebras defined by Bueso et. al., (both $G$-algebras and PBW algebras take coefficients in fields and assume that coefficientes commute with variables), Auslander-Gorenstein rings, some Calabi-Yau and skew Calabi-Yau algebras, some Artin-Schelter regular algebras, some Koszul and augmented Koszul algebras, quantum polynomials, some quantum universal enveloping algebras, some graded skew Clifford algebras and others (see \cite{BrownGoodearl2002, BuesoTorrecillasVerschoren, Rosenberg1995, Suarez2017, SuarezLezamaReyes2017, SuarezReyesgenerKoszul2017} for a list of examples). As we can appreciated, skew PBW extensions include a lot of noncommutative rings, which means that a theory of symmetry for these extensions will cover several treatments in the literature and will establish similar results for algebras not considered before. To formulate this theory is the objective of the present paper. In this way, we continue the study of ring theoretical properties of skew PBW extensions (c.f. \cite{AcostaLezama2015, Artamonov2015, ArtamonovLezamaFajardo2016, LezamaGallego2011, LezamaGallego2017, LezamaAcostaReyes2015,  LezamaLatorre2017, LezamaReyes2014, ReyesPhD2013}).\\

The paper is organized as follows: In Section \ref{definitionexamplesspbw} we establish some useful results about skew PBW extensions for the rest of the paper. In Section \ref{SigmarigidandSigmaDeltacompatible} we recall the notions of $\Sigma$-rigid rings and $(\Sigma,\Delta)$-compatible rings which are key throughout the paper. Next, in Section  \ref{weaksymmetricskewPBWextensions} we present some results about nilpotent elements of skew PBW extensions and then characterize these extensions over weak symmetric rings. In Section \ref{SigmaDeltaweaksymmetricskewPBWextensions}  we investigate skew PBW extensions over weak $(\Sigma,\Delta)$-symmetric rings. The results presented in Sections \ref{weaksymmetricskewPBWextensions} and \ref{SigmaDeltaweaksymmetricskewPBWextensions} generalize corresponding results presented by Ouyang and Chen \cite{OuyangChen2010} for Ore extensions of injective type and generalize those presented in \cite{JaramilloReyes2018}. The techniques used here are fairly standard and follow the same path as other text on the subject. Finally, Section \ref{examplespaper} presents remarkable examples appearing in noncommutative algebraic geometry and theoretical physics where results obtained in Sections \ref{weaksymmetricskewPBWextensions} and \ref{SigmaDeltaweaksymmetricskewPBWextensions} can be illustrated. \\

Throughout the paper, the word ring means a ring (not necessarily commutative) with unity. The letter $k$ will denote a commutative ring and $\Bbbk$ will denote a field. $\mathbb{C}$ will denote the field of complex numbers. Finally, for a ring $B$, ${\rm nil}(B)$ represents the set of nilpotent elements of $B$.
\section{Skew PBW extensions}\label{definitionexamplesspbw}
In this section we recall some  results about skew PBW extensions which are important for the rest of the paper.
\begin{definition}[\cite{LezamaGallego2011},  Definition 1]\label{gpbwextension}
Let $R$ and $A$ be rings. We say that $A$ is a {\em skew PBW extension} (also known as {\em $\sigma$-PBW extension}) {\em of}  $R$, which is denoted by $A:=\sigma(R)\langle
x_1,\dots,x_n\rangle$, if the following conditions hold:
\begin{enumerate}
\item[\rm (i)]$R\subseteq A$.
\item[\rm (ii)]there exist elements $x_1,\dots ,x_n\in A$ such that $A$ is a left free $R$-module, with basis ${\rm Mon}(A):= \{x^{\alpha}=x_1^{\alpha_1}\cdots
x_n^{\alpha_n}\mid \alpha=(\alpha_1,\dots ,\alpha_n)\in
\mathbb{N}^n\}$,  and $x_1^{0}\dotsb x_n^{0}:=1\in {\rm Mon}(A)$.

\item[\rm (iii)]For each $1\leq i\leq n$ and any $r\in R\ \backslash\ \{0\}$, there exists an element $c_{i,r}\in R\ \backslash\ \{0\}$ such that $x_ir-c_{i,r}x_i\in R$.
\item[\rm (iv)] For any elements $1\leq i,j\leq n$, there exists $c_{i,j}\in R\ \backslash\ \{0\}$ such that $x_jx_i-c_{i,j}x_ix_j\in R+Rx_1+\cdots +Rx_n$ (i.e., there exist elements $r_0^{(i,j)}, r_1^{(i,j)}, \dotsc, r_n^{(i,j)}$ of $R$ with $x_jx_i - c_{i,j}x_ix_j = r_0^{(i,j)} + \sum_{k=1}^{n} r_k^{(i,j)}x_k$).
\end{enumerate}
\end{definition}
Since ${\rm Mon}(A)$ is a left $R$-basis of $A$, the elements $c_{i,r}$ and $c_{i,j}$ are unique, (\cite{LezamaGallego2011}, Remark 2).
\begin{proposition}[\cite{LezamaGallego2011}, Proposition 3]\label{sigmadefinition}
Let $A$ be a skew PBW  extension of $R$. For each $1\leq i\leq
n$, there exist an injective endomorphism $\sigma_i:R\rightarrow
R$ and an $\sigma_i$-derivation $\delta_i:R\rightarrow R$ such that $x_ir=\sigma_i(r)x_i+\delta_i(r)$, for  each $r\in R$. From now on, we will write  $\Sigma:=\{\sigma_1,\dotsc, \sigma_n\}$, and $\Delta:=\{\delta_1,\dotsc, \delta_n\}$.
\end{proposition}
\begin{definition}[\cite{LezamaGallego2011}, Definition 4; \cite{LezamaAcostaReyes2015}, Definition 2.3]\label{sigmapbwderivationtype}
Let $A$ be a skew PBW extension of $R$.
\begin{enumerate}
\item[\rm (a)] $A$ is called \textit{quasi-commutative}, if the conditions {\rm(}iii{\rm)} and {\rm(}iv{\rm)} in Definition \ref{gpbwextension} are replaced by the following: (iii') for each $1\leq i\leq n$ and all $r\in R\ \backslash\ \{0\}$, there exists $c_{i,r}\in R\ \backslash\ \{0\}$ such that $x_ir=c_{i,r}x_i$; (iv') for any $1\leq i,j\leq n$, there exists $c_{i,j}\in R\ \backslash\ \{0\}$ such that $x_jx_i=c_{i,j}x_ix_j$.
\item[\rm (b)] $A$ is called \textit{bijective}, if $\sigma_i$ is bijective for each $1\leq i\leq n$, and $c_{i,j}$ is invertible, for any $1\leq i<j\leq n$.
\item[\rm (c)] $A$ is called of {\em endomorphism type}, if $\delta_i=0$, for every $i$.  In addition, if every $\sigma_i$ is bijective, $A$ is a skew PBW extension of {\em automorphism type}.
\end{enumerate}
\end{definition}
\begin{example}\label{mentioned}
If $R[x_1;\sigma_1,\delta_1]\dotsb [x_n;\sigma_n,\delta_n]$ is an iterated Ore extension where
\begin{itemize}
\item $\sigma_i$ is injective, for $1\le i\le n$,
\item $\sigma_i(r)$, $\delta_i(r)\in R$, for every $r\in R$ and $1\le i\le n$,
\item $\sigma_j(x_i)=cx_i+d$, for $i < j$, and $c, d\in R$, where $c$ has a left inverse,
\item $\delta_j(x_i)\in R + Rx_1 + \dotsb + Rx_n$, for $i < j$,
\end{itemize}
then $R[x_1;\sigma_1,\delta_1]\dotsb [x_n;\sigma_n, \delta_n] \cong \sigma(R)\langle x_1,\dotsc, x_n\rangle$ (\cite{LezamaReyes2014}, p. 1212). Skew PBW extensions of endomorphism type are more general than iterated Ore extensions of the form $R[x_1;\sigma_1]\dotsb [x_n;\sigma_n]$, and in general, skew PBW extensions are more general than Ore extensions of injective type (see \cite{LezamaReyes2014}). Examples of noncommutative rings which are skew PBW extensions but can not be expressed as iterated Ore extensions can be found in \cite{ReyesPhD2013, ReyesYesica2018}.
\end{example}
\begin{definition}\label{definitioncoefficients}
If $A$ is a skew PBW extension of $R$, then:
\begin{enumerate}
\item[\rm (i)]for $\alpha=(\alpha_1,\dots,\alpha_n)\in \mathbb{N}^n$,
$\sigma^{\alpha}:=\sigma_1^{\alpha_1}\cdots \sigma_n^{\alpha_n}$,
$|\alpha|:=\alpha_1+\cdots+\alpha_n$. If
$\beta=(\beta_1,\dots,\beta_n)\in \mathbb{N}^n$, then
$\alpha+\beta:=(\alpha_1+\beta_1,\dots,\alpha_n+\beta_n)$.
\item[\rm (ii)]For $X=x^{\alpha}\in {\rm Mon}(A)$,
$\exp(X):=\alpha$, $\deg(X):=|\alpha|$, and $X_0:=1$. The symbol $\succeq$ will denote a total order defined on ${\rm Mon}(A)$ (a total order on $\mathbb{N}^n$). For an
 element $x^{\alpha}\in {\rm Mon}(A)$, ${\rm exp}(x^{\alpha}):=\alpha\in \mathbb{N}^n$.  If
$x^{\alpha}\succeq x^{\beta}$ but $x^{\alpha}\neq x^{\beta}$, we
write $x^{\alpha}\succ x^{\beta}$. Every element $f\in A$ can be expressed uniquely as $f=a_0 + a_1X_1+\dotsb +a_mX_m$, with $a_i\in R$, and $X_m\succ \dotsb \succ X_1$ (eventually, we will use expressions as $f=a_0 + a_1Y_1+\dotsb +a_mY_m$, with $a_i\in R$, and $Y_m\succ \dotsb \succ Y_1$). With this notation, we define ${\rm
lm}(f):=X_m$, the \textit{leading monomial} of $f$; ${\rm
lc}(f):=a_m$, the \textit{leading coefficient} of $f$; ${\rm
lt}(f):=a_mX_m$, the \textit{leading term} of $f$; ${\rm exp}(f):={\rm exp}(X_m)$, the \textit{order} of $f$; and
 $E(f):=\{{\rm exp}(X_i)\mid 1\le i\le t\}$. Note that $\deg(f):={\rm max}\{\deg(X_i)\}_{i=1}^t$. Finally, if $f=0$, then
${\rm lm}(0):=0$, ${\rm lc}(0):=0$, ${\rm lt}(0):=0$. We also
consider $X\succ 0$ for any $X\in {\rm Mon}(A)$. For a detailed description of monomial orders in skew PBW  extensions, see \cite{LezamaGallego2011}, Section 3.
\end{enumerate}
\end{definition}
\begin{proposition}[\cite{LezamaGallego2011}, Theorem 7]\label{coefficientes}
If $A$ is a polynomial ring with coefficients in $R$ with respect to the set of indeterminates $\{x_1,\dots,x_n\}$, then $A$ is a skew PBW  extension of $R$ if and only if the following conditions hold:
\begin{enumerate}
\item[\rm (1)]for each $x^{\alpha}\in {\rm Mon}(A)$ and every $0\neq r\in R$, there exist unique elements $r_{\alpha}:=\sigma^{\alpha}(r)\in R\ \backslash\ \{0\}$, $p_{\alpha ,r}\in A$, such that $x^{\alpha}r=r_{\alpha}x^{\alpha}+p_{\alpha, r}$,  where $p_{\alpha ,r}=0$, or $\deg(p_{\alpha ,r})<|\alpha|$ if
$p_{\alpha , r}\neq 0$. If $r$ is left invertible,  so is $r_\alpha$.
\item[\rm (2)]For each $x^{\alpha},x^{\beta}\in {\rm Mon}(A)$,  there exist unique elements $c_{\alpha,\beta}\in R$ and $p_{\alpha,\beta}\in A$ such that $x^{\alpha}x^{\beta}=c_{\alpha,\beta}x^{\alpha+\beta}+p_{\alpha,\beta}$, where $c_{\alpha,\beta}$ is left invertible, $p_{\alpha,\beta}=0$, or $\deg(p_{\alpha,\beta})<|\alpha+\beta|$ if
$p_{\alpha,\beta}\neq 0$.
\end{enumerate}
\end{proposition}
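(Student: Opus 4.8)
The statement is a biconditional, so the plan is to prove each direction in turn, in both cases leaning on the fact that ${\rm Mon}(A)$ is a left $R$-basis of $A$: an identity $\sum_{\gamma} a_{\gamma}x^{\gamma} = \sum_{\gamma} b_{\gamma}x^{\gamma}$ forces $a_{\gamma} = b_{\gamma}$ for all $\gamma$, and this single device will deliver both the uniqueness assertions and all the comparisons of leading terms. Before attacking the forward direction I would isolate an auxiliary lemma that feeds everything that follows: \emph{for every $1\le i\le n$ and every $\gamma\in\mathbb{N}^n$ one has $x_ix^{\gamma} = d_{i,\gamma}x^{\gamma+e_i} + q_{i,\gamma}$ with $d_{i,\gamma}\in R$ left invertible and $q_{i,\gamma}=0$ or $\deg(q_{i,\gamma})\le|\gamma|$}. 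I would prove this by induction on $|\gamma|$: if $\gamma$ has no positive coordinate below $i$, then $x_ix^{\gamma}$ is literally the standard monomial $x^{\gamma+e_i}$; otherwise, with $a<i$ the least index with $\gamma_a>0$, I write $x^{\gamma}=x_ax^{\gamma-e_a}$, expand $x_ix_a$ by Definition~\ref{gpbwextension}(iv) as $c_{a,i}x_ax_i$ plus terms of degree $\le 1$, apply the inductive hypothesis to $x_ix^{\gamma-e_a}$, push $x_a$ across the resulting coefficient by Proposition~\ref{sigmadefinition}, and use that $x_a$ times a standard monomial with no coordinate below $a$ is again a standard monomial. Collecting terms gives $d_{i,\gamma}=c_{a,i}\sigma_a(d_{i,\gamma-e_a})$, which is left invertible because $\sigma_a$ is a ring homomorphism (so $\sigma_a(1)=1$ and $\sigma_a$ carries a left inverse to a left inverse) and products of left-invertible elements are left invertible; a short degree count bounds the rest. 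As a corollary, $\deg(x_if)\le\deg(f)+1$ for all $f\in A$.

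For the forward implication, assuming $A$ is a skew PBW extension, I would prove (1) by induction on $|\alpha|$: the case $|\alpha|=1$ is exactly Proposition~\ref{sigmadefinition}, and for $|\alpha|\ge 2$ I take $i$ least with $\alpha_i>0$, factor $x^{\alpha}=x_ix^{\alpha-e_i}$, apply the inductive hypothesis to $x^{\alpha-e_i}r$, and move $x_i$ past $\sigma^{\alpha-e_i}(r)$ by Proposition~\ref{sigmadefinition}; since the coordinates of $\alpha$ below $i$ vanish, $\sigma_i\circ\sigma^{\alpha-e_i}=\sigma^{\alpha}$, so the leading term is $\sigma^{\alpha}(r)x^{\alpha}$ and the correction $\delta_i(\sigma^{\alpha-e_i}(r))x^{\alpha-e_i}+x_ip_{\alpha-e_i,r}$ has degree $<|\alpha|$ by the lemma. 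Here $\sigma^{\alpha}(r)\ne 0$ because each $\sigma_i$ is injective, and $\sigma^{\alpha}(r)$ is left invertible whenever $r$ is because $\sigma^{\alpha}$ is a unital ring endomorphism. For (2) I would again induct on $|\alpha|$ with $\beta$ arbitrary, factor $x^{\alpha}=x_ix^{\alpha-e_i}$, apply the inductive hypothesis to $x^{\alpha-e_i}x^{\beta}$, commute $x_i$ past $c_{\alpha-e_i,\beta}$ by Proposition~\ref{sigmadefinition}, and hit $x_ix^{\alpha-e_i+\beta}$ with the lemma; the leading coefficient then comes out as $c_{\alpha,\beta}=\sigma_i(c_{\alpha-e_i,\beta})\,d_{i,\alpha-e_i+\beta}$, a product of left-invertible elements, and every accumulated term has degree $<|\alpha+\beta|$. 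Uniqueness in both (1) and (2) is then immediate: two decompositions differ by $(r_\alpha-r'_\alpha)x^{\alpha}$ (resp.\ $(c_{\alpha,\beta}-c'_{\alpha,\beta})x^{\alpha+\beta}$) equal to something of degree $<|\alpha|$ (resp.\ $<|\alpha+\beta|$), which forces the coefficients to coincide.

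For the converse, assume (1) and (2); conditions (i) and (ii) of Definition~\ref{gpbwextension} are part of the hypothesis that $A$ is a polynomial ring over $R$ in the $x_i$. Condition (iii) is the case $\alpha=e_i$ of (1): $x_ir=\sigma_i(r)x_i+p_{e_i,r}$ with $p_{e_i,r}\in R$ and $\sigma_i(r)\ne 0$, so $c_{i,r}:=\sigma_i(r)$ works. For (iv), fix $i,j$ and specialize (2) to $x^{e_j}x^{e_i}=c_{e_j,e_i}x^{e_i+e_j}+p_{e_j,e_i}$ with $\deg p_{e_j,e_i}\le 1$: if $i<j$ then $x^{e_i+e_j}=x_ix_j$ is standard and $c_{i,j}:=c_{e_j,e_i}$ works; if $i=j$ then uniqueness in (2) forces $c_{e_i,e_i}=1$; and if $i>j$ then $x^{e_i+e_j}=x_jx_i$ is standard, so from $x_ix_j=c_{e_i,e_j}x_jx_i+p_{e_i,e_j}$ and a left inverse $u$ of $c_{e_i,e_j}$ (which exists by (2)) I get $x_jx_i-u\cdot x_ix_j=-up_{e_i,e_j}\in R+Rx_1+\dots+Rx_n$, so $c_{i,j}:=u$ works; in every case $c_{i,j}\ne 0$.

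The main obstacle is not any individual computation but the bookkeeping of lower-order terms: at each inductive step one must check that \emph{everything} besides the designated leading monomial genuinely drops in degree (so the $p$'s meet the stated bound) and that the leading coefficient is \emph{exactly} $\sigma^{\alpha}(r)$, respectively a product of the prescribed left-invertible factors, not merely ``nonzero''. This is why the order of the argument matters: the lemma on $x_ix^{\gamma}$ must be established first, purely from Definition~\ref{gpbwextension}(iv), Proposition~\ref{sigmadefinition}, and the left-invertibility of the structure constants $c_{i,j}$, so that the inductions for (1) and (2) may invoke the degree bound $\deg(x_if)\le\deg(f)+1$ and the leading-term information without circularity; once that is in place, propagating left-invertibility is painless, since each $\sigma_i$ fixes $1$ and hence preserves one-sided inverses.
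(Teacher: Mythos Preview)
The paper does not supply its own proof of this proposition: it is quoted verbatim from \cite{LezamaGallego2011}, Theorem~7, and no argument appears between its statement and the next proposition. So there is nothing to compare your proof against within this paper.

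That said, your outline is sound and would go through. The auxiliary lemma on $x_ix^{\gamma}$ is the right engine, and your inductions for (1) and (2) are correctly organized; choosing $i$ minimal with $\alpha_i>0$ is exactly what makes $x_ix^{\alpha-e_i}=x^{\alpha}$ and $\sigma_i\circ\sigma^{\alpha-e_i}=\sigma^{\alpha}$ work. One point deserves to be made explicit rather than assumed: Definition~\ref{gpbwextension}(iv) only asserts $c_{i,j}\neq 0$, not left-invertibility. You invoke ``the left-invertibility of the structure constants $c_{i,j}$'' as an input to the lemma, but this itself must be deduced. The one-line argument is: for $i<j$, condition~(iv) applied to both ordered pairs gives $x_jx_i=c_{i,j}x_ix_j+p$ and $x_ix_j=c_{j,i}x_jx_i+q$ with $\deg p,\deg q\le 1$; substituting the first into the second yields $x_ix_j=c_{j,i}c_{i,j}x_ix_j+(\text{terms of degree}\le 1)$, and since $x_ix_j$ is a basis element of degree $2$, comparing coefficients forces $c_{j,i}c_{i,j}=1$. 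With that in hand, your recursion $d_{i,\gamma}=c_{a,i}\,\sigma_a(d_{i,\gamma-e_a})$ indeed produces a left-invertible element, and everything else follows as you describe. The converse direction is handled correctly, including the case $i>j$ where you manufacture $c_{i,j}$ as a left inverse of $c_{e_i,e_j}$.
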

\begin{proposition}[\cite{Reyes2015},  Proposition 2.9] \label{lindass}
If $\alpha=(\alpha_1,\dotsc, \alpha_n)\in \mathbb{N}^{n}$ and $r$ is an element of $R$, then
{\small{\begin{align*}
x^{\alpha}r = &\ x_1^{\alpha_1}x_2^{\alpha_2}\dotsb x_{n-1}^{\alpha_{n-1}}x_n^{\alpha_n}r = x_1^{\alpha_1}\dotsb x_{n-1}^{\alpha_{n-1}}\biggl(\sum_{j=1}^{\alpha_n}x_n^{\alpha_{n}-j}\delta_n(\sigma_n^{j-1}(r))x_n^{j-1}\biggr)\\
+ &\ x_1^{\alpha_1}\dotsb x_{n-2}^{\alpha_{n-2}}\biggl(\sum_{j=1}^{\alpha_{n-1}}x_{n-1}^{\alpha_{n-1}-j}\delta_{n-1}(\sigma_{n-1}^{j-1}(\sigma_n^{\alpha_n}(r)))x_{n-1}^{j-1}\biggr)x_n^{\alpha_n}\\
+ &\ x_1^{\alpha_1}\dotsb x_{n-3}^{\alpha_{n-3}}\biggl(\sum_{j=1}^{\alpha_{n-2}} x_{n-2}^{\alpha_{n-2}-j}\delta_{n-2}(\sigma_{n-2}^{j-1}(\sigma_{n-1}^{\alpha_{n-1}}(\sigma_n^{\alpha_n}(r))))x_{n-2}^{j-1}\biggr)x_{n-1}^{\alpha_{n-1}}x_n^{\alpha_n}\\
+ &\ \dotsb + x_1^{\alpha_1}\biggl( \sum_{j=1}^{\alpha_2}x_2^{\alpha_2-j}\delta_2(\sigma_2^{j-1}(\sigma_3^{\alpha_3}(\sigma_4^{\alpha_4}(\dotsb (\sigma_n^{\alpha_n}(r))))))x_2^{j-1}\biggr)x_3^{\alpha_3}x_4^{\alpha_4}\dotsb x_{n-1}^{\alpha_{n-1}}x_n^{\alpha_n} \\
+ &\ \sigma_1^{\alpha_1}(\sigma_2^{\alpha_2}(\dotsb (\sigma_n^{\alpha_n}(r))))x_1^{\alpha_1}\dotsb x_n^{\alpha_n}, \ \ \ \ \ \ \ \ \ \ \sigma_j^{0}:={\rm id}_R\ \ {\rm for}\ \ 1\le j\le n.
\end{align*}}}
\end{proposition}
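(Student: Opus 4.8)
The plan is to reduce the statement to the single basic relation $x_ir=\sigma_i(r)x_i+\delta_i(r)$ of Proposition \ref{sigmadefinition} and to iterate it, one variable at a time. First I would record the one-variable case: for a fixed $i$, any $r\in R$ and any integer $m\ge 0$,
\[
x_i^{m}r=\sum_{j=1}^{m}x_i^{\,m-j}\,\delta_i\!\bigl(\sigma_i^{\,j-1}(r)\bigr)\,x_i^{\,j-1}+\sigma_i^{m}(r)\,x_i^{m},
\]
which is proved by induction on $m$. The case $m=1$ is exactly Proposition \ref{sigmadefinition}; for the inductive step one writes $x_i^{m+1}r=x_i\,(x_i^{m}r)$, substitutes the inductive hypothesis, uses $x_i\cdot x_i^{\,m-j}=x_i^{\,m-j+1}$, and then applies $x_is=\sigma_i(s)x_i+\delta_i(s)$ once to the surviving term $\sigma_i^{m}(r)x_i^{m}$; a reindexing of the sum absorbs the new $\delta_i$-contribution as the $j=m+1$ summand and yields the asserted shape.

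Next I would prove the proposition by induction on the number $n$ of variables, the base case $n=1$ being the one-variable identity just established. For the inductive step, split off the rightmost block by applying that identity to $x_n^{\alpha_n}r$:
\[
x^{\alpha}r=x_1^{\alpha_1}\dotsb x_{n-1}^{\alpha_{n-1}}\Bigl(\sum_{j=1}^{\alpha_n}x_n^{\alpha_n-j}\,\delta_n\!\bigl(\sigma_n^{j-1}(r)\bigr)\,x_n^{j-1}\Bigr)+\bigl(x_1^{\alpha_1}\dotsb x_{n-1}^{\alpha_{n-1}}\,\sigma_n^{\alpha_n}(r)\bigr)\,x_n^{\alpha_n}.
\]
The first summand is already the first line of the asserted formula and needs no further treatment. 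For the second, set $s:=\sigma_n^{\alpha_n}(r)\in R$; since the computation behind the formula uses only the relations $x_ir=\sigma_i(r)x_i+\delta_i(r)$ with $i\le n-1$ (equivalently, $x_1,\dots,x_{n-1}$ generate a skew PBW extension of $R$ with the same $\sigma_i,\delta_i$), the induction hypothesis expands $x_1^{\alpha_1}\dotsb x_{n-1}^{\alpha_{n-1}}s$ as a sum of $\delta_i$-terms, one for each of the variables $x_{n-1},\dots,x_1$ moved to the right, plus the pure term $\sigma_1^{\alpha_1}(\dotsb(\sigma_{n-1}^{\alpha_{n-1}}(s))\dotsb)x_1^{\alpha_1}\dotsb x_{n-1}^{\alpha_{n-1}}$. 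Right-multiplying this expansion by $x_n^{\alpha_n}$ and substituting $s=\sigma_n^{\alpha_n}(r)$: each $\delta_i$-term gains the trailing factor $x_n^{\alpha_n}$, and because the inner automorphism stack supplied by the $(n-1)$-variable formula already reads $\sigma_{i+1}^{\alpha_{i+1}}\dotsb\sigma_{n-1}^{\alpha_{n-1}}$, composing with the outer $\sigma_n^{\alpha_n}$ carried by $s$ turns the argument of $\delta_i$ into $\sigma_i^{j-1}(\sigma_{i+1}^{\alpha_{i+1}}(\dotsb(\sigma_n^{\alpha_n}(r))\dotsb))$; this reproduces the subsequent lines of the statement, and the pure term becomes $\sigma_1^{\alpha_1}(\sigma_2^{\alpha_2}(\dotsb(\sigma_n^{\alpha_n}(r))\dotsb))x_1^{\alpha_1}\dotsb x_n^{\alpha_n}$, the final line.

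I expect the difficulty to be purely organizational rather than conceptual: the entire argument rests on the single relation of Proposition \ref{sigmadefinition}, but one has to be careful that, in the inductive step, each $\delta_i$-summand ends up with the correct prefix $x_1^{\alpha_1}\dotsb x_{i-1}^{\alpha_{i-1}}$, the correct suffix $x_{i+1}^{\alpha_{i+1}}\dotsb x_n^{\alpha_n}$, and exactly the right nested composition of the $\sigma$'s inside $\delta_i$. Carrying out the proof as an induction on $n$, peeling off variables one at a time and invoking only the one-variable identity together with the induction hypothesis at each stage, is what keeps that bookkeeping transparent; attempting to expand all $n$ variables simultaneously, while possible, would make the computation considerably more cumbersome.
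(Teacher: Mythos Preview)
Your argument is correct and follows the natural route: first the single-variable identity
\[
x_i^{m}r=\sum_{j=1}^{m}x_i^{\,m-j}\delta_i\bigl(\sigma_i^{\,j-1}(r)\bigr)x_i^{\,j-1}+\sigma_i^{m}(r)x_i^{m}
\]
by induction on $m$, then an induction on the number of variables, peeling off the rightmost block $x_n^{\alpha_n}$ at each stage. The paper itself does not supply a proof of this proposition; it is quoted verbatim from \cite{Reyes2015}, so there is no in-paper argument to compare yours against. Your approach is exactly the standard one and is what any direct proof of this identity amounts to.

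One small caveat: your parenthetical ``equivalently, $x_1,\dots,x_{n-1}$ generate a skew PBW extension of $R$ with the same $\sigma_i,\delta_i$'' is not quite accurate in general, since the relations $x_jx_i-c_{i,j}x_ix_j\in R+Rx_1+\cdots+Rx_n$ for $i,j\le n-1$ may involve $x_n$, so the subring generated by $R$ and $x_1,\dots,x_{n-1}$ need not be a skew PBW extension in $n-1$ variables. This does not affect your argument, however: as you observe just before the parenthetical, the identity being proved uses only the relations $x_ir=\sigma_i(r)x_i+\delta_i(r)$ of Proposition~\ref{sigmadefinition} and never the commutation relations among the $x_i$'s, so the inductive step goes through verbatim. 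Simply drop the parenthetical and the proof is clean.
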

\begin{remark}[\cite{Reyes2015}, Remark 2.10 (iv)] \label{juradpr}
About Proposition \ref{lindass}, we have the following observation: if $X_i:=x_1^{\alpha_{i1}}\dotsb x_n^{\alpha_{in}}$ and $Y_j:=x_1^{\beta_{j1}}\dotsb x_n^{\beta_{jn}}$, when we compute every summand of $a_iX_ib_jY_j$ we obtain pro\-ducts of the coefficient $a_i$ with several evaluations of $b_j$ in $\sigma$'s and $\delta$'s depending of the coordinates of $\alpha_i$. This assertion follows from the expression:
\begin{align*}
a_iX_ib_jY_j = &\ a_i\sigma^{\alpha_{i}}(b_j)x^{\alpha_i}x^{\beta_j} + a_ip_{\alpha_{i1}, \sigma_{i2}^{\alpha_{i2}}(\dotsb (\sigma_{in}^{\alpha_{in}}(b_j)))} x_2^{\alpha_{i2}}\dotsb x_n^{\alpha_{in}}x^{\beta_j} \\
+ &\ a_i x_1^{\alpha_{i1}}p_{\alpha_{i2}, \sigma_3^{\alpha_{i3}}(\dotsb (\sigma_{{in}}^{\alpha_{in}}(b_j)))} x_3^{\alpha_{i3}}\dotsb x_n^{\alpha_{in}}x^{\beta_j} \\
+ &\ a_i x_1^{\alpha_{i1}}x_2^{\alpha_{i2}}p_{\alpha_{i3}, \sigma_{i4}^{\alpha_{i4}} (\dotsb (\sigma_{in}^{\alpha_{in}}(b_j)))} x_4^{\alpha_{i4}}\dotsb x_n^{\alpha_{in}}x^{\beta_j}\\
+ &\ \dotsb + a_i x_1^{\alpha_{i1}}x_2^{\alpha_{i2}} \dotsb x_{i(n-2)}^{\alpha_{i(n-2)}}p_{\alpha_{i(n-1)}, \sigma_{in}^{\alpha_{in}}(b_j)}x_n^{\alpha_{in}}x^{\beta_j} \\
+ &\ a_i x_1^{\alpha_{i1}}\dotsb x_{i(n-1)}^{\alpha_{i(n-1)}}p_{\alpha_{in}, b_j}x^{\beta_j}.
\end{align*}
\end{remark}
\section{$\Sigma$-rigid rings and $(\Sigma,\Delta)$-compatible rings}\label{SigmarigidandSigmaDeltacompatible}
In this section we recall some results concerning $\Sigma$-rigid rings and $(\Sigma,\Delta)$-compatible rings and their relation with skew PBW extensions.
\begin{definition}[\cite{Reyes2015},  Definition 3.2] \label{generaldef2015}
Let $B$ be a ring and $\Sigma$ a family of endomorphisms of $B$. $\Sigma$ is called a {\em rigid endomorphisms family}, if $r\sigma^{\alpha}(r)=0$ implies $r=0$,  for every $r\in B$ and $\alpha\in \mathbb{N}^n$. A ring $B$ is called to be $\Sigma$-{\em rigid}, if there exists a rigid endomorphisms family $\Sigma$ of $B$.
\end{definition}
The motivation to define $\Sigma$-rigid rings was to generalize the rigid rings defined by Krempa \cite{Krempa1996}. Now, if $\Sigma$ is a rigid endomorphisms family, then every element $\sigma_i\in \Sigma$ is a monomorphism. In fact, $\Sigma$-rigid rings are reduced rings: if $B$ is a $\Sigma$-rigid ring and $r^2=0$ for $r\in B$, then we have the equalities $0=r\sigma^{\alpha}(r^2)\sigma^{\alpha}(\sigma^{\alpha}(r))=r\sigma^{\alpha}(r)\sigma^{\alpha}(r)\sigma^{\alpha}(\sigma^{\alpha}(r))=r\sigma^{\alpha}(r)\sigma^{\alpha}(r\sigma^{\alpha}(r))$, i.e., $r\sigma^{\alpha}(r)=0$ and so $r=0$, that is, $B$ is reduced (note that there exists an endomorphism of a reduced ring which is not a rigid endomorphism, see \cite{HongKimKwak2000}, Example 9). With this in mind, we consider the family of injective endomorphisms $\Sigma$ and the family $\Delta$ of $\Sigma$-derivations in a skew PBW extension $A$ of a ring $R$ (Proposition \ref{sigmadefinition}). The notion of rigidness with another ring theoretical properties such as Baer, quasi-Baer, p.p and p.q have been investigated for skew PBW extensions in \cite{NinoReyes2017, Reyes2015, Reyes2018, ReyesSuarezClifford2017, ReyesSuarezUMA2018, ReyesYesica2018} (in the context of Ore extensions, the beautiful monograph \cite{Birkenmeieretal2013} contains a complete list of works on all these properties). Recall that if $A$ is a skew PBW extension of $R$ where the the elements $c_{i, j}$ are in\-ver\-ti\-ble in $R$, then $R$ is $\Sigma$-rigid if and only if $A$ is a reduced ring (\cite{Reyes2015},  Proposition 3.5).
\begin{proposition}[\cite{Reyes2015}, Lemma 3.3 and Corollary 3.4]\label{Reyes2015Lemma3.3}
Let $R$ be an $\Sigma$-rigid ring and $a,b\in R$. Then:
\begin{enumerate}
\item [\rm (1)] If $ab=0$ then $a\sigma^{\alpha}(b)=\sigma^{\alpha}(a)b=0$, for any $\alpha\in \mathbb{N}^n$.
\item [\rm (2)] If $ab=0$ then $a\delta^{\beta}(b)=\delta^{\beta}(a)b=0$, for any $\beta\in \mathbb{N}^n$.
\item [\rm (3)] If $ab=0$ then $a\sigma^{\alpha}(\delta^{\beta}(b))=a\delta^{\beta}(\sigma^{\alpha}(b))=0$,  for every $\alpha, \beta\in \mathbb{N}^n$.
\item [\rm (4)] If $a\sigma^{\theta}(b)=\sigma^{\theta}(a)b=0$ for some $\theta\in \mathbb{N}^n$, then $ab=0$.
\item [\rm (5)] If $A$ is a skew PBW  extension over $R$, $ab=0\Rightarrow  ax^{\alpha}bx^{\beta}=0$, for any elements $a,b\in R$ and each $\alpha, \beta\in \mathbb{N}^n$.
\end{enumerate}
\end{proposition}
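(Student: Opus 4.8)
The plan is to handle parts (1)--(4), which involve only the ring $R$ together with its endomorphisms and derivations, directly from the definition of a rigid family, exploiting the fact (recorded just above the statement) that a $\Sigma$-rigid ring is reduced, hence reversible. Throughout, the basic device will be: to prove that an element $w\in R$ is zero, produce a multi-index $\gamma\in\mathbb{N}^{n}$ with $w\,\sigma^{\gamma}(w)=0$ and invoke rigidity.

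For (1), put $w=a\sigma^{\alpha}(b)$; then $w\,\sigma^{\alpha}(w)=a\,\sigma^{\alpha}(b)\,\sigma^{\alpha}(a)\,\sigma^{\alpha}(\sigma^{\alpha}(b))=a\,\sigma^{\alpha}(ba)\,\sigma^{\alpha}(\sigma^{\alpha}(b))$, and since $ab=0$ gives $ba=0$ by reversibility, the middle factor vanishes, so $w=0$. For the identity $\sigma^{\alpha}(a)b=0$ I would not recompute: from $ba=0$ the case just proved (with $a$ and $b$ interchanged) yields $b\sigma^{\alpha}(a)=0$, and reversibility gives $\sigma^{\alpha}(a)b=0$. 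Part (4) is the same computation run backwards: if $a\sigma^{\theta}(b)=0$, then $(ba)\,\sigma^{\theta}(ba)=b\bigl(a\sigma^{\theta}(b)\bigr)\sigma^{\theta}(a)=0$, so $ba=0$ by rigidity and $ab=0$ by reversibility.

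The one genuinely computational point is (2). It suffices to prove the single-variable version ``$ab=0\ \Rightarrow\ a\delta_{i}(b)=\delta_{i}(a)b=0$'' for each $i$: together with (1), an immediate induction then shows that $ab=0$ implies $a\,\tau(b)=0$ for every composition $\tau$ of maps from $\Sigma\cup\Delta$, and symmetrically $\tau(a)b=0$, which gives (2) and, feeding these conclusions back into (1) and (2), also (3). To see $a\delta_{i}(b)=0$: apply the $\sigma_{i}$-derivation $\delta_{i}$ to $ba=0$, obtaining $\sigma_{i}(b)\delta_{i}(a)+\delta_{i}(b)a=0$; multiply on the left by $a$ and use $a\sigma_{i}(b)=0$ from (1) to get $a\delta_{i}(b)a=0$, whence $\bigl(a\delta_{i}(b)\bigr)^{2}=0$ and $a\delta_{i}(b)=0$ since $R$ is reduced. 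To see $\delta_{i}(a)b=0$: apply $\delta_{i}$ to $ab=0$, obtaining $\sigma_{i}(a)\delta_{i}(b)+\delta_{i}(a)b=0$; since $a\delta_{i}(b)=0$, part (1) gives $\sigma_{i}(a)\delta_{i}(b)=0$, so $\delta_{i}(a)b=0$.

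Finally, (5) is a bookkeeping consequence of Remark \ref{juradpr} (built on Proposition \ref{lindass}): it exhibits $ax^{\alpha}bx^{\beta}$ as a sum of terms of the form $a\,\tau(b)\,X$ with $X\in{\rm Mon}(A)$ and $\tau$ a composition of maps from $\Sigma\cup\Delta$; by the annihilation property established via (1)--(3), every such $a\,\tau(b)$ is $0$, so the sum is $0$. The main obstacle I anticipate is not any hard estimate but the organizational care needed to run the inductions in (2)--(3) and to match the precise shape of the summands in Remark \ref{juradpr}; the only step that is not a formal manipulation is the squaring trick yielding $a\delta_{i}(b)a=0$ in (2).
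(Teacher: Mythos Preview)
The paper does not supply its own proof of this proposition; it is quoted as Lemma~3.3 and Corollary~3.4 of \cite{Reyes2015}, so there is no in-paper argument to compare against. Your proof is correct and follows the standard route one would expect (and which the cited source uses): exploit rigidity and the consequent reducedness/reversibility of $R$ for (1) and (4); for (2) apply the $\sigma_i$-Leibniz rule to $ba=0$ and use the squaring trick $(a\delta_i(b))^2=(a\delta_i(b)a)\,\delta_i(b)=0$; then (3) follows by the evident induction, and (5) is read off from the coefficient expansion recorded in Proposition~\ref{lindass} and Remark~\ref{juradpr}.
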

Next we present the notion of $(\Sigma,\Delta)$-compatible rings which was introduced by the authors in \cite{ReyesSuarezUMA2018}.
\begin{definition}[\cite{ReyesSuarezUMA2018}, Definition 3.2]\label{Definition3.52008}
Consider a ring $R$ with a family of endomorphisms  $\Sigma$ and a family of $\Sigma$-derivations $\Delta$. Then,
\begin{enumerate}
\item [\rm (i)] $R$ is said to be $\Sigma$-{\em compatible}, if for each $a,b\in R$, $a\sigma^{\alpha}(b)=0$ if and only if $ab=0$, for every $\alpha\in \mathbb{N}^{n}$;
\item [\rm (ii)] $R$ is said to be $\Delta$-{\em compatible}, if for each $a,b \in R$,  $ab=0$ implies $a\delta^{\beta}(b)=0$, for every $\beta \in \mathbb{N}^{n}$.
\end{enumerate}
If $R$ is both $\Sigma$-compatible and $\Delta$-compatible, $R$ is called $(\Sigma, \Delta)$-{\em compatible}.
\end{definition}
\begin{proposition}[\cite{ReyesSuarezUMA2018}, Proposition 3.8]\label{ReyesSuarezUMA2017Prop3.8}
Let $R$ be a $(\Sigma, \Delta)$-compatible ring. For every $a, b \in R$, we have:
\begin{enumerate}
\item [\rm (1)] if $ab=0$, then $a\sigma^{\theta}(b) = \sigma^{\theta}(a)b=0$, for each $\theta\in \mathbb{N}^{n}$.
\item [\rm (2)] If $\sigma^{\beta}(a)b=0$ for some $\beta\in \mathbb{N}^{n}$, then $ab=0$.
\item [\rm (3)] If $ab=0$, then $\sigma^{\theta}(a)\delta^{\beta}(b)= \delta^{\beta}(a)\sigma^{\theta}(b) = 0$, for every $\theta, \beta\in \mathbb{N}^{n}$.
\end{enumerate}
\end{proposition}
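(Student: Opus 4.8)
The plan is to bootstrap everything from the two halves of $(\Sigma,\Delta)$-compatibility together with the fact that each $\sigma_i$ is a ring endomorphism of $R$. I would first record two elementary consequences of $\Sigma$-compatibility. First, each $\sigma_i$ is injective: if $\sigma_i(c)=0$ then $1\cdot\sigma^{e_i}(c)=0$, so $\Sigma$-compatibility forces $c=1\cdot c=0$; hence $\sigma^{\alpha}$ is injective for every $\alpha\in\mathbb{N}^{n}$. Second, a one-sided reformulation: if $cd=0$ in $R$, then applying the endomorphism $\sigma^{\alpha}$ gives $\sigma^{\alpha}(c)\,\sigma^{\alpha}(d)=0$, whence $\Sigma$-compatibility (used with first entry $\sigma^{\alpha}(c)$ and second entry $d$) yields $\sigma^{\alpha}(c)\,d=0$; dually $c\,\sigma^{\alpha}(d)=0$ is the definition itself. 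So $cd=0$ implies $\sigma^{\alpha}(c)\,d=c\,\sigma^{\alpha}(d)=0$ for all $\alpha\in\mathbb{N}^{n}$, a fact I will use freely.

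Part (1) is then immediate from the one-sided reformulation. For part (2), from $\sigma^{\beta}(a)b=0$ the definition of $\Sigma$-compatibility gives $\sigma^{\beta}(a)\,\sigma^{\beta}(b)=0$, i.e.\ $\sigma^{\beta}(ab)=0$, and injectivity of $\sigma^{\beta}$ gives $ab=0$.

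For part (3), assume $ab=0$. By $\Delta$-compatibility, $a\,\delta^{\beta}(b)=0$ for every $\beta$, and then the one-sided reformulation gives $\sigma^{\theta}(a)\,\delta^{\beta}(b)=0$, which is the first assertion. For the second, the key point is that $ab=0$ also forces $\delta^{\beta}(a)\,b=0$. I would first prove the single-variable statement: for any $x,y\in R$ with $xy=0$ one has $\delta_i(x)y=0$, because $\delta_i$ being a $\sigma_i$-derivation gives $0=\delta_i(xy)=\sigma_i(x)\delta_i(y)+\delta_i(x)y$ and the first summand vanishes, since $x\,\delta_i(y)=0$ (by $\Delta$-compatibility) implies $\sigma_i(x)\,\delta_i(y)=0$ (by the one-sided reformulation). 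Iterating this — first raising the power of $\delta_n$, then of $\delta_{n-1}$, and so on down to $\delta_1$, each time applied to the already-annihilating pair — yields $\delta^{\beta}(a)\,b=0$ for every $\beta$. A final application of $\Sigma$-compatibility then gives $\delta^{\beta}(a)\,\sigma^{\theta}(b)=0$, completing part (3).

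I expect the only step requiring genuine care to be the iteration in part (3): passing from $ab=0$ to $\delta^{\beta}(a)b=0$ for a multi-index $\beta$ means re-applying the derivation identity to each intermediate product $\delta_j^{\,k}(a)\,b$ and checking that the cross term $\sigma_i(\,\cdot\,)\delta_i(\,\cdot\,)$ still vanishes at every stage — which is exactly where $\Delta$-compatibility and the one-sided reformulation are used in tandem. Everything else reduces to bookkeeping with the definitions.
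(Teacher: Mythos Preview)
Your argument is correct. The paper does not supply its own proof of this proposition; it is quoted verbatim from \cite{ReyesSuarezUMA2018}, Proposition 3.8, without argument, so there is nothing in the present paper to compare against. Your derivation is self-contained and uses only the definitions of $\Sigma$- and $\Delta$-compatibility together with the fact that each $\sigma_i$ is a ring endomorphism, which is exactly what one expects for this kind of bookkeeping lemma. The two auxiliary facts you isolate at the start --- injectivity of $\sigma^{\alpha}$ and the ``one-sided reformulation'' $cd=0\Rightarrow \sigma^{\alpha}(c)d=0$ --- are the right tools, and the only nontrivial step, namely $ab=0\Rightarrow \delta_i(a)b=0$ via the Leibniz rule $0=\delta_i(ab)=\sigma_i(a)\delta_i(b)+\delta_i(a)b$, is handled cleanly. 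The iteration in part (3) is fine regardless of the precise ordering convention for $\delta^{\beta}$, since the single-step implication $xy=0\Rightarrow \delta_i(x)y=0$ holds for every index $i$.
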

From \cite{ReyesSuarezUMA2018}, Proposition 3.4, we know that every $\Sigma$-rigid ring is a $(\Sigma, \Delta)$-compatible ring. The converse is false as we can appreciated in \cite{ReyesSuarezUMA2018}, Example 3.6. In this way, $\Sigma$-rigid rings are contained strictly in $(\Sigma, \Delta)$-compatible rings. Nevertheless, these two notions coincide when the ring is assumed to be reduced, such as the following proposition establishes.
\begin{proposition}[\cite{ReyesSuarezUMA2018}, Theorem 3.9]\label{ReyesSuarez2018Theorem3.9}
If $A$ is a skew PBW extension of a ring $R$, then the following statements are equivalent: (1) $R$ is reduced and $(\Sigma, \Delta)$-compatible. (2) $R$ is $\Sigma$-rigid. (3) $A$ is reduced.
\end{proposition}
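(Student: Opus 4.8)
The plan is to prove the cycle of implications $(2)\Rightarrow(3)\Rightarrow(1)\Rightarrow(2)$, since each arrow is either already available in the excerpt or follows from a short computation.

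First, $(2)\Rightarrow(3)$: assuming $R$ is $\Sigma$-rigid, I would invoke the remark recorded just before the statement, namely (\cite{Reyes2015}, Proposition 3.5), which says that when the $c_{i,j}$ are invertible, $R$ is $\Sigma$-rigid if and only if $A$ is reduced. To be self-contained I would instead argue directly: take $f\in A$ with $f^2=0$, write $f=a_0+a_1X_1+\dots+a_mX_m$ with $X_m\succ\dots\succ X_1$, and look at the leading term of $f^2$. By Proposition \ref{coefficientes}(2) and Proposition \ref{lindass}, the leading coefficient of $f^2$ is $a_m c_{\exp(X_m),\exp(X_m)}\sigma^{\exp(X_m)}(a_m)$ up to a left-invertible factor, so $a_m\sigma^{\exp(X_m)}(a_m)=0$, whence $a_m=0$ by $\Sigma$-rigidity; an induction on $m$ (using Proposition \ref{Reyes2015Lemma3.3}(5) to kill cross terms) gives $f=0$. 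So $A$ is reduced.

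Next, $(3)\Rightarrow(1)$: if $A$ is reduced, then its subring $R$ is reduced. It remains to show $R$ is $(\Sigma,\Delta)$-compatible. Suppose $ab=0$ in $R$. Since $R$ is reduced, it is in particular symmetric (reduced rings are symmetric), hence semicommutative, so $aRb=0$; more to the point, reducedness gives $ba=0$ as well, and standard manipulations with the identity $x_i r=\sigma_i(r)x_i+\delta_i(r)$ inside the reduced ring $A$ force $a\sigma_i(b)=0$ and $a\delta_i(b)=0$. For instance, from $ab=0=ba$ one computes that $(b a x_i)^2$-type products and the element $a x_i b$ lie in nilpotent position, and comparing coefficients of $1$ and $x_i$ in suitable squared elements of $A$ yields $\sigma_i(a)\delta_i(b)+\dots=0$ componentwise; iterating over all $i$ and composing gives $a\sigma^{\alpha}(b)=0$ and $ab=0\Rightarrow a\delta^{\beta}(b)=0$ for all $\alpha,\beta\in\mathbb{N}^n$. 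Conversely $a\sigma^{\alpha}(b)=0$ implies $ab=0$ because $\sigma^{\alpha}$ is injective and $a\sigma^{\alpha}(b)=0$ together with reducedness gives $\sigma^{\alpha}(b)a=0$, then $\sigma^{\alpha}(ba)=0$ after moving $a$ across (again using reducedness to symmetrize), hence $ba=0$ and $ab=0$. Thus $R$ is $\Sigma$-compatible and $\Delta$-compatible.

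Finally, $(1)\Rightarrow(2)$: if $R$ is reduced and $(\Sigma,\Delta)$-compatible, I must produce a rigid endomorphism family, and the natural candidate is $\Sigma$ itself. Suppose $r\sigma^{\alpha}(r)=0$ for some $r\in R$, $\alpha\in\mathbb{N}^n$. By $\Sigma$-compatibility applied in the form of Proposition \ref{ReyesSuarezUMA2017Prop3.8}(2) (or directly from the definition), $r\sigma^{\alpha}(r)=0$ implies $r\cdot r=0$, i.e. $r^2=0$, and then reducedness gives $r=0$. Hence $\Sigma$ is a rigid endomorphism family and $R$ is $\Sigma$-rigid.

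The main obstacle is the implication $(3)\Rightarrow(1)$, specifically extracting $\Sigma$- and $\Delta$-compatibility of $R$ purely from the reducedness of the big ring $A$: one has to carefully choose elements of $A$ built from $a$, $b$, and the $x_i$ whose nilpotency (forced by reducedness) translates, after expanding via Propositions \ref{lindass} and \ref{coefficientes} and equating coefficients in the left $R$-basis ${\rm Mon}(A)$, into the desired relations $a\sigma^\alpha(b)=0$ and $a\delta^\beta(b)=0$. The bookkeeping of the $\delta$-terms in the expansion is the delicate part; everything else is either cited or a one-line reduced-ring argument. Alternatively, this whole implication can be shortcut by citing (\cite{Reyes2015}, Proposition 3.5) for $(3)\Leftrightarrow(2)$ and then $(2)\Rightarrow(1)$ via (\cite{ReyesSuarezUMA2018}, Proposition 3.4), with $(1)\Rightarrow(2)$ as above closing the loop.
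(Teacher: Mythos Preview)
The paper does not prove this proposition at all; it is quoted verbatim from \cite{ReyesSuarezUMA2018}, Theorem 3.9, and no argument is given here. So there is no in-paper proof to compare against, and the intended route is precisely the one you list as your alternative: $(2)\Leftrightarrow(3)$ is the content of \cite{Reyes2015}, Proposition 3.5 (recalled just above the statement), $(2)\Rightarrow(1)$ combines \cite{ReyesSuarezUMA2018}, Proposition 3.4 (cited in the paragraph preceding the proposition) with the observation, spelled out after Definition \ref{generaldef2015}, that $\Sigma$-rigid rings are reduced, and $(1)\Rightarrow(2)$ is exactly your one-line argument $r\sigma^{\alpha}(r)=0\Rightarrow r^{2}=0\Rightarrow r=0$. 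That cycle is correct.

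Your direct attack on $(3)\Rightarrow(1)$, however, has a real gap in the converse half of $\Sigma$-compatibility. From $a\sigma^{\alpha}(b)=0$ and reducedness of $R$ you correctly get $\sigma^{\alpha}(b)a=0$, but the step ``then $\sigma^{\alpha}(ba)=0$ after moving $a$ across'' is unjustified: $\sigma^{\alpha}(b)a$ is not $\sigma^{\alpha}(ba)$, and injectivity of $\sigma^{\alpha}$ does not let you pull $a$ inside the endomorphism. Reducedness and reversibility of $R$ alone will not manufacture this implication; one really needs to pass through $A$ (or through $\Sigma$-rigidity). Since you already supply the citation-based alternative that avoids this step, the overall proposal stands---just delete the unsupported ``moving $a$ across'' sentence and rely on the route $(3)\Rightarrow(2)\Rightarrow(1)$.
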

\section{Skew PBW extensions over weak symmetric rings}\label{weaksymmetricskewPBWextensions}
In \cite{OuyangChen2010}, Definition 1, Ouyang and Chen introduced the notion of weak symmetric ring in the following way: a ring $B$ is called a {\em weak symmetric} ring, if $abc\in {\rm nil}(B) \Rightarrow acb\in {\rm nil}(B)$, for every elements $a, b, c\in R$. They proved that their notion extends the concept of symmetric ring, that is all symmetric rings are weak symmetric (\cite{OuyangChen2010}, Proposition 2.1). However, the converse of the assertion is false, i.e, there exists a weak symmetric ring which is not symmetric (\cite{OuyangChen2010}, Example 2.2). \\

With the aim of studying these notions of symmetry in the case of skew PBW extensions, we start with four results (Lemmas \ref{2010Lemma2.7} and \ref{2010Lemma2.8} and Theorems \ref{2010Lemma2.10} and \ref{2010Theorem2.11})  about nilpotent elements in skew PBW extensions. Our Lemma \ref{2010Lemma2.7} generalizes \cite{OuyangChen2010}, Lemma 2.7.
\begin{lemma}\label{2010Lemma2.7}
If $R$ is a $(\Sigma,\Delta)$-compatible and reversible ring, then $ab\in {\rm nil}(R)$ implies that $a\sigma^{\alpha}(\delta^{\beta}(b))$ and $a\delta^{\beta}(\sigma^{\alpha}(b))$ also are elements of ${\rm nil}(R)$, for any $\alpha,\beta\in \mathbb{N}^{n}$.
\end{lemma}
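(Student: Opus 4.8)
The plan is to reduce the statement to the situation where $ab$ is genuinely nilpotent of some index $k$, and then to propagate this nilpotency through the maps $\sigma_i$ and $\delta_i$ using $(\Sigma,\Delta)$-compatibility together with reversibility. First I would record the reformulation $ab\in{\rm nil}(R)$ as $(ab)^k=0$ for some $k\ge 1$, i.e. $a(ba)^{k-1}b=0$. By reversibility this is equivalent to $b a(ba)^{k-1}=0$, hence $(ba)^k=0$; so $ba\in{\rm nil}(R)$ as well, and more symmetrically $ab\in{\rm nil}(R)\iff ba\in{\rm nil}(R)$. This symmetry is the device that lets one move a factor past a product.

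The core step is the following claim: if $ab\in{\rm nil}(R)$ and $\tau$ is any one of the maps $\sigma^{\alpha}$, $\delta^{\beta}$ (or a composite), then $a\tau(b)\in{\rm nil}(R)$. To see this, set $c=ab$ with $c^k=0$. I would like to apply Proposition \ref{ReyesSuarezUMA2017Prop3.8} (the $(\Sigma,\Delta)$-compatibility consequences), but that proposition is stated for the implication $xy=0\Rightarrow x\sigma^{\theta}(y)=0$, etc., not for nilpotent products. The trick is to manufacture a genuine zero product out of the nilpotent one: from $(ab)^k=0$ write $\big(a(ba)^{k-1}\big)\cdot b=0$. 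By $\Sigma$-compatibility (Proposition \ref{ReyesSuarezUMA2017Prop3.8}(1)) this gives $a(ba)^{k-1}\sigma^{\alpha}(b)=0$, and by $\Delta$-compatibility it also gives $a(ba)^{k-1}\delta^{\beta}(b)=0$ and, via part (3), $a(ba)^{k-1}\sigma^{\alpha}(\delta^{\beta}(b))=0$ and $a(ba)^{k-1}\delta^{\beta}(\sigma^{\alpha}(b))=0$. Now I need to turn $a(ba)^{k-1}\tau(b)=0$ back into nilpotency of $a\tau(b)$. Here reversibility re-enters: from $a(ba)^{k-1}\tau(b)=0$ one gets, cyclically, $\tau(b)\,a(ba)^{k-1}=0$, equivalently $(\tau(b)a)(ba)^{k-1}=0$; repeatedly using $\Sigma$- and $\Delta$-compatibility and reversibility to replace each interior $b$ in $(ba)^{k-1}$ by $\tau(b)$ (peeling off zero products one factor at a time) should yield $(\tau(b)a)^{k}=0$, hence $(a\tau(b))^{k}=0$ by reversibility again. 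Thus $a\tau(b)\in{\rm nil}(R)$, which is exactly the assertion for $\tau=\sigma^{\alpha}\delta^{\beta}$ and $\tau=\delta^{\beta}\sigma^{\alpha}$.

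For the inductive peeling I would phrase a small sublemma: if $uv\in{\rm nil}(R)$ in a $(\Sigma,\Delta)$-compatible reversible ring, then $u\,\tau(v)\in{\rm nil}(R)$ and $\tau(u)\,v\in{\rm nil}(R)$ for $\tau\in\{\sigma^{\alpha},\delta^{\beta}\}$; this is the one-variable version and follows by exactly the $c^k=0\Rightarrow a(ba)^{k-1}b=0\Rightarrow$ (compatibility) $\Rightarrow$ (reversibility) $\Rightarrow (\,\cdot\,)^k=0$ argument above. Iterating the sublemma — first replace $b$ by $\sigma^{\alpha}(b)$, then apply it again with $\delta^{\beta}$ acting on $\sigma^{\alpha}(b)$, noting $\delta^{\beta}(\sigma^{\alpha}(b))$ is what appears — gives both $a\sigma^{\alpha}(\delta^{\beta}(b))\in{\rm nil}(R)$ and $a\delta^{\beta}(\sigma^{\alpha}(b))\in{\rm nil}(R)$.

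The main obstacle I anticipate is precisely the passage from a \emph{nilpotent} product to the desired \emph{nilpotent} product: compatibility is phrased for annihilators, so each use costs one careful rewriting $(ab)^k=0\leftrightarrow a(ba)^{k-1}b=0$ plus one use of reversibility to cycle factors, and the bookkeeping must be done so that the \emph{same} exponent $k$ survives to the end (it does, since reversibility preserves the length of the product of $a$'s and $b$'s). Care is also needed because $\Sigma$-compatibility is an ``if and only if'' but $\Delta$-compatibility is only a one-way implication, so the argument must be arranged to use $\delta^{\beta}$ only in the forward direction; this is automatic in the scheme above, since we always start from a known zero and push $\delta$'s onto it.
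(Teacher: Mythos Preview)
Your proposal is correct and follows essentially the same route as the paper: rewrite $(ab)^k=0$ so that the product ends in a lone $b$, apply $(\Sigma,\Delta)$-compatibility to replace that terminal $b$ by $\sigma^{\alpha}(\delta^{\beta}(b))$ (or $\delta^{\beta}(\sigma^{\alpha}(b))$), use reversibility to cycle the new block $a\tau(b)$ to the front, and iterate until every $b$ has been replaced, yielding $(a\tau(b))^k=0$. The paper presents exactly this peeling argument, only more tersely (and it too invokes Proposition~\ref{ReyesSuarezUMA2017Prop3.8}(3) for the step $xy=0\Rightarrow x\sigma^{\alpha}(\delta^{\beta}(y))=0$, which strictly speaking follows by chaining $\Delta$-compatibility with part~(1) rather than from part~(3) alone); your optional sublemma-plus-iteration packaging is a harmless reorganisation of the same idea.
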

\begin{proof}
By assumption there exists a positive integer $k$ such that $(ab)^{k}=0$. Consider the following equalities:
\begin{align*}
(ab)^{k} = &\ ab ab \dotsb ab ab ab\ \ \ (k\ {\rm times})\\
= &\ abab \dotsb ab ab a\sigma^{\alpha}(\delta^{\beta}(b))\ \ \ ({\rm Proposition\ \ref{ReyesSuarezUMA2017Prop3.8}\ (3)})\\
= &\ a\sigma^{\alpha}(\delta^{\beta}(b)) ab ab ab \dotsb ab ab\ \ \ (R\ {\rm is\ reversible})\\
= &\ a\sigma^{\alpha}(\delta^{\beta}(b)) ab ab \dotsb ab a\sigma^{\alpha}(\delta^{\beta}(b))\ \ \ ({\rm Proposition\ \ref{ReyesSuarezUMA2017Prop3.8}\ (3)})\\
= &\ a\sigma^{\alpha}(\delta^{\beta}(b)) a\sigma^{\alpha}(\delta^{\beta}(b)) ab ab \dotsb ab\ \ \ (R\ {\rm is\ reversible})
\end{align*}
Following this procedure we guarantee that the element $a\sigma^{\alpha}(\delta^{\beta}(b))$ belongs to ${\rm nil}(R)$. For the element $a\delta^{\beta}(\sigma^{\alpha}(b))$ the reasoning is completely similar. 
\end{proof}
The next lemma extends \cite{OuyangChen2010}, Lemma 2.8.
\begin{lemma}\label{2010Lemma2.8}
If $R$ is a $(\Sigma,\Delta)$-compatible ring, then $a\sigma^{\theta}(b)\in {\rm nil}(R)$ implies $ab\in {\rm nil}(R)$, for every $\theta \in \mathbb{N}^{n}$ and each $a, b \in R$.
\end{lemma}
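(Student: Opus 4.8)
The plan is to reduce the statement to the $(\Sigma,\Delta)$-compatibility hypothesis by an inductive argument on the nilpotency index, mirroring the structure of the proof of Lemma \ref{2010Lemma2.7} but running the implications in the reverse direction. Suppose $a\sigma^{\theta}(b)\in {\rm nil}(R)$, say $(a\sigma^{\theta}(b))^{k}=0$ for some positive integer $k$. The goal is to show $(ab)^{m}=0$ for some $m$, and the natural guess is $m=k$ (or possibly a modest multiple of $k$).

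First I would expand $(a\sigma^{\theta}(b))^{k}$ and try to strip the $\sigma^{\theta}$ off one factor at a time using the ``only if'' direction of $\Sigma$-compatibility together with part (2) of Proposition \ref{ReyesSuarezUMA2017Prop3.8} (if $\sigma^{\beta}(a)b=0$ then $ab=0$). The subtlety is that $\Sigma$-compatibility as stated relates $a\sigma^{\alpha}(b)=0$ to $ab=0$ only for a \emph{single} application of $\sigma^{\alpha}$ at the rightmost position, whereas the expanded product $a\sigma^{\theta}(b)\,a\sigma^{\theta}(b)\,\cdots\,a\sigma^{\theta}(b)$ has many interleaved $\sigma^{\theta}(b)$'s. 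So the key step is to manipulate the product into the form $(\text{something})\cdot\sigma^{\theta}(\text{something})=0$ and then apply compatibility; iterating, one clears the twists. Concretely, from $(a\sigma^{\theta}(b))^{k}=0$ one writes the last factor as $a\sigma^{\theta}(b)$ and uses compatibility to replace it, at the cost of applying $\sigma^{\theta}$ to the preceding block; repeating inwards and using that $\sigma^{\theta}$ is an (injective) endomorphism — so $\sigma^{\theta}(xy)=\sigma^{\theta}(x)\sigma^{\theta}(y)$ and $\sigma^{\theta}$ applied to a product of zero is zero — should collapse the whole expression to $(ab)^{k}=0$ after finitely many steps.

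The main obstacle I anticipate is bookkeeping: making the ``clear one twist at a time'' induction precise without accidentally needing a reversibility hypothesis (which this lemma, unlike Lemma \ref{2010Lemma2.7}, does \emph{not} assume). The cleanest route is probably an inner induction showing that for each $0\le j\le k$ one has an identity of the shape $\sigma^{\theta}\!\big((ab)^{j}\big)\cdot(a\sigma^{\theta}(b))^{k-j}=0$ (base case $j=0$ is the hypothesis, and the case $j=k$ gives $\sigma^{\theta}((ab)^{k})=0$, hence $(ab)^{k}=0$ by injectivity of $\sigma^{\theta}$, or directly by compatibility part (2)). The inductive step from $j$ to $j+1$ peels off one more $a\sigma^{\theta}(b)$ from the right: write $(a\sigma^{\theta}(b))^{k-j}=(a\sigma^{\theta}(b))^{k-j-1}\cdot a\sigma^{\theta}(b)$, regroup so that the trailing factor $\sigma^{\theta}(b)$ sits at the far right of a product equal to $0$, apply $\Sigma$-compatibility to drop that $\sigma^{\theta}$, and then reabsorb the freed $b$ together with the $a$ in front of it into $\sigma^{\theta}((ab)^{j+1})$ using that everything under the leading $\sigma^{\theta}$ multiplies to $(ab)^{j}\cdot ab$. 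Once this scheme is set up, the verification is routine and uses only Definition \ref{Definition3.52008} and Proposition \ref{ReyesSuarezUMA2017Prop3.8}; no new idea beyond careful iteration is required.
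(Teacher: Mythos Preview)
Your overall strategy---peeling off the $\sigma^{\theta}$'s one at a time using $\Sigma$-compatibility together with the fact that $\sigma^{\theta}$ is a ring endomorphism---is exactly the paper's approach, and it does yield $(ab)^{k}=0$. However, the specific invariant you propose, $\sigma^{\theta}\!\big((ab)^{j}\big)\cdot(a\sigma^{\theta}(b))^{k-j}=0$, has the clean block on the wrong side, and the inductive step as you describe it does not go through: after you strip the rightmost $\sigma^{\theta}$ you obtain
\[
\sigma^{\theta}\!\big((ab)^{j}\big)\cdot(a\sigma^{\theta}(b))^{k-j-1}\cdot ab=0,
\]
and the trailing $ab$ is separated from the leading $\sigma^{\theta}((ab)^{j})$ by the middle block $(a\sigma^{\theta}(b))^{k-j-1}$, so it cannot be ``reabsorbed'' into $\sigma^{\theta}((ab)^{j+1})$ without a commutativity or reversibility assumption you rightly noted is absent here.

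The paper (implicitly) runs the same induction with the invariant $(a\sigma^{\theta}(b))^{k-j}\cdot(ab)^{j}=0$, accumulating the untwisted block on the \emph{right}. The step is then clean: from $(a\sigma^{\theta}(b))^{k-j}(ab)^{j}=0$, apply Proposition~\ref{ReyesSuarezUMA2017Prop3.8}(1) to get $(a\sigma^{\theta}(b))^{k-j}\sigma^{\theta}\!\big((ab)^{j}\big)=0$; split off the last $a\sigma^{\theta}(b)$ and use multiplicativity of $\sigma^{\theta}$ to write this as $(a\sigma^{\theta}(b))^{k-j-1}\,a\,\sigma^{\theta}\!\big(b(ab)^{j}\big)=0$; then apply $\Sigma$-compatibility once more to obtain $(a\sigma^{\theta}(b))^{k-j-1}(ab)^{j+1}=0$. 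Your toolkit and intuition are correct; only the side on which the $(ab)^{j}$ block grows needs to be flipped.
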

\begin{proof}
Since $a\sigma^{\theta}(b)\in {\rm nil}(R)$, there exists a positive integer $k$ with $(a\sigma^{\theta}(b))^{k}=0$. We have the following assertions:
\begin{align*}
(a\sigma^{\theta}(b))^{k} =  &\ a\sigma^{\theta}(b) a\sigma^{\theta}(b) \dotsb a\sigma^{\theta}(b) a\sigma^{\theta}(b)\ \ \ (k\ {\rm times})\\
= &\ a\sigma^{\theta}(b)a\sigma^{\theta}(b) \dotsb a\sigma^{\theta}(b) ab \ \ \ ({\rm Definition\ of}\ \Sigma-{\rm compatibility})\\
= &\ a\sigma^{\theta}(b) a\sigma^{\theta}(b) \dotsb a\sigma^{\theta}(b) \sigma^{\theta}(ab)\ \ \ ({\rm Proposition\ \ref{ReyesSuarezUMA2017Prop3.8}\ (1)})\\
= &\ a\sigma^{\theta}(b) a\sigma^{\theta}(b) \dotsb a \sigma^{\theta}(bab)\ \ \ (\sigma^{\theta}\ {\rm is\ an\ endomorphism\ of}\ R)\\
= &\ a\sigma^{\theta}(b) a\sigma^{\theta}(b) \dotsb abab\ \ \
({\rm Definition\ of}\ \Sigma-{\rm compatibility})
\end{align*}
If we continue in this way, we can see that the element $ab\in {\rm nil}(R)$, which concludes the proof.
\end{proof}
We recall from \cite{LiuZhao2006}, Lemma 3.1, that if $B$ is a semicommutative ring, then ${\rm nil}(B)$ is an ideal of $B$. Our Theorem \ref{2010Lemma2.10} generalizes \cite{OuyangChen2010}, Lemma 2.10. We need to assume that the elements $c_{i,j}$ of Definition \ref{gpbwextension} (iv) are central in $R$. With the purpose of abbreviating, we will write o.t.l.t to mean {\em other terms less than} in the sense of monomial orders (Definition \ref{definitioncoefficients} (ii)).
\begin{theorem}\label{2010Lemma2.10} If $A$ is a skew PBW extension over a $(\Sigma, \Delta)$-compatible and reversible ring $R$, then for every element $f=\sum_{i=0}^{m} a_iX_i\in A$, $f\in {\rm nil}(A)$ if and only if $a_i\in {\rm nil}(R)$, for each $1\le i\le m$.
\end{theorem}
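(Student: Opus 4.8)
The plan is to prove both implications separately, with the ``if'' direction being the delicate one. For the ``only if'' direction, suppose $f = \sum_{i=0}^{m} a_i X_i \in {\rm nil}(A)$ with $X_m \succ \dotsb \succ X_1 \succ X_0 = 1$. I would argue by looking at the highest term under the monomial order. If $f^N = 0$, then by Proposition \ref{coefficientes} the leading term of $f^N$ is $\sigma^{\exp(X_m)}$-type evaluations of $({\rm lc}(f))$ multiplied together times $c$-coefficients (which are left invertible, and by hypothesis central), so ${\rm lc}(f)^N$ — more precisely a product of $\sigma$-twists of it — vanishes. Using $(\Sigma,\Delta)$-compatibility (Proposition \ref{ReyesSuarezUMA2017Prop3.8} and Lemma \ref{2010Lemma2.8}) one peels the $\sigma$'s off and concludes $a_m \in {\rm nil}(R)$. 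Then, since $R$ is reversible and hence semicommutative, ${\rm nil}(R)$ is an ideal (by \cite{LiuZhao2006}, Lemma 3.1), so $a_m X_m \in {\rm nil}(A)$ once one checks $a_m X_m$ generates a nil one-sided object; subtracting $a_m X_m$ and inducting on the number of terms gives $a_i \in {\rm nil}(R)$ for all $i$.

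For the ``if'' direction, assume $a_i \in {\rm nil}(R)$ for every $i$. The key structural fact is that, because $R$ is semicommutative, ${\rm nil}(R)$ is an ideal of $R$; moreover $(\Sigma,\Delta)$-compatibility implies $\sigma_i({\rm nil}(R)) \subseteq {\rm nil}(R)$ and $\delta_i({\rm nil}(R)) \subseteq {\rm nil}(R)$ for each $i$ — the former from Lemma \ref{2010Lemma2.8} applied with $a = 1$, the latter from $\Delta$-compatibility together with the ideal property. Consequently the set $N := \{\, g = \sum b_j Y_j \in A : b_j \in {\rm nil}(R)\ \text{for all } j \,\}$ is closed under the multiplication of $A$: using Remark \ref{juradpr}, every coefficient appearing in a product $b_j Y_j \cdot b_k Y_k$ is $b_j$ times a sum of $\sigma/\delta$-evaluations of $b_k$, hence lands in the ideal ${\rm nil}(R)$; so $N$ is an ideal of $A$. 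It remains to show $N$ is nil. Here I would filter $A$ by total degree and pass to the associated graded object, or argue directly: since ${\rm nil}(R)$ is a nil ideal and the $X_i$ satisfy the PBW straightening rules, one shows $f^{M}$ has all coefficients in ${\rm nil}(R)^{\lceil M/(m+1)\rceil}$ or similar, but this needs care because ${\rm nil}(R)$ need not be nilpotent. The cleaner route: reduce to the quasi-commutative case via the graded ring $\mathrm{gr}(A)$ attached to the standard filtration (a quasi-commutative skew PBW extension of $R$, by the results recalled in Section \ref{definitionexamplesspbw}), observe that a homogeneous element of $N$ is nilpotent in $\mathrm{gr}(A)$ because there the $\delta$'s drop out and one only multiplies finitely many nil elements of $R$ controlled by the $\sigma$-twists, then lift nilpotence from $\mathrm{gr}(A)$ back to $A$ using that the filtration is exhaustive and $N$ is an ideal.

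The main obstacle I anticipate is precisely the last point: ${\rm nil}(R)$ is an ideal but not necessarily nilpotent, so one cannot simply bound powers of $f$ by powers of the ideal. The honest fix is the degree/filtration argument — in a product of $N$ many terms $a_{i_1} X_{i_1} \cdots a_{i_N} X_{i_N}$, after straightening, each resulting monomial of degree $d$ carries a coefficient that is a product of at least $\sim N - d$ nil elements coming from the ``$\sigma^\alpha(a)$'' parts (the $\delta$-lowering steps strictly decrease degree, so they can happen at most $d$ times along any straightening path), and since the total degree of $f^N$ is at most $N \cdot \deg(f)$, taking $N$ large forces a fixed nilpotency exponent to be exceeded in every coefficient. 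Making the bookkeeping in Remark \ref{juradpr} precise enough to justify ``$\geq N-d$ genuinely nil factors multiplied together, all inside the ideal ${\rm nil}(R)$'' is the technical heart of the proof; everything else is an application of Lemma \ref{2010Lemma2.8}, Proposition \ref{ReyesSuarezUMA2017Prop3.8}, and the fact that ${\rm nil}(R)$ is an ideal.
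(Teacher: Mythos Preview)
Your ``only if'' direction is essentially the paper's: compute the leading coefficient of $f^{k}$, strip the $\sigma$'s via $(\Sigma,\Delta)$-compatibility to get $a_{m}\in{\rm nil}(R)$, and descend to lower terms using that ${\rm nil}(R)$ is an ideal. That part is fine.

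The gap is in your ``if'' direction. Both of your proposed fixes fail for concrete reasons. First, the associated-graded route does not close: nilpotence of the image of $f$ in $\mathrm{gr}(A)$ only tells you that $f^{N}$ has strictly smaller leading degree than expected, and there is no general mechanism to lift nilpotence from $\mathrm{gr}(A)$ to $A$ (the filtration being exhaustive and $N$ being an ideal are not enough). Second, your ``$N-d$ nil factors'' count is incorrect: when you straighten $Xr$ you obtain $\sigma(r)X+\delta(r)(\text{lower})$, so the $\delta$-branch lowers the \emph{monomial} degree but does not drop the factor $r$; after full straightening of $a_{i_{1}}X_{i_{1}}\cdots a_{i_{N}}X_{i_{N}}$, every resulting coefficient is (a sum of) products containing \emph{all} $N$ factors $\phi_{1}(a_{i_{1}})\phi_{2}(a_{i_{2}})\cdots\phi_{N}(a_{i_{N}})$ (times central $c$'s), where each $\phi_{j}$ is some word in the $\sigma$'s and $\delta$'s. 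So the count is always $N$, and the real question is why such a product vanishes. Merely having $N$ factors in the ideal ${\rm nil}(R)$ is useless, since ${\rm nil}(R)$ need not be nilpotent; you need repetitions of a \emph{specific} $a_{i}$.

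The paper supplies exactly that via a pigeonhole argument. Let $k=\max\{k_{i}:a_{i}^{k_{i}}=0\}$ and expand $f^{(m+1)k+1}$ as a sum of $(m+1)^{(m+1)k+1}$ products of the form $a_{i_{1}}X_{i_{1}}\cdots a_{i_{(m+1)k+1}}X_{i_{(m+1)k+1}}$. Since there are only $m+1$ possible indices, some fixed index $i$ occurs at least $k+1$ times in each such product. Now use that in a $(\Sigma,\Delta)$-compatible reversible (hence semicommutative) ring, $a_{i}^{k+1}=0$ forces every product $\phi_{1}(a_{i})r_{1}\phi_{2}(a_{i})r_{2}\cdots r_{k}\phi_{k+1}(a_{i})=0$ for arbitrary $r_{j}\in R$ and $\sigma/\delta$-words $\phi_{j}$: insert the $r_{j}$'s one at a time using semicommutativity ($ab=0\Rightarrow aRb=0$), and replace the $a_{i}$'s by $\phi_{j}(a_{i})$'s one at a time using Proposition~\ref{ReyesSuarezUMA2017Prop3.8} together with reversibility. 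Hence every straightened coefficient of each of the $(m+1)^{(m+1)k+1}$ products is zero, so $f^{(m+1)k+1}=0$. This pigeonhole step is the missing idea in your proposal.
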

\begin{proof}
Let $f\in A$ given as above and suppose that $f\in {\rm nil}(A)$ with $X_1 \prec X_2 \prec \dotsb \prec X_m$. Consider the notation established in Proposition \ref{coefficientes}. There exists a positive integer $k$ such that $f^{k} = (a_0 + a_1X_1 + \dotsb + a_mX_m)^{k}=0$. As an illustration, note that
{\normalsize{\begin{align*}
f^{2} = &\ (a_mX_m + \dotsb + a_1X_1 + a_0)(a_mX_m + \dotsb + a_1X_1 + a_0)\\
= &\ a_mX_ma_mX_m +\ {\rm o.t.l.t}\ {\rm exp}(X_m)\\
= &\ a_m[\sigma^{\alpha_m}(a_m)X_m + p_{\alpha_m, a_m}]X_m + \ {\rm o.t.l.t}\ {\rm exp}(X_m)\\
= &\ a_m\sigma^{\alpha_m}(a_m)X_mX_m + a_mp_{\alpha_m, a_m}X_m +\ {\rm o.t.l.t}\ {\rm exp}(X_m)\\
= &\ a_m\sigma^{\alpha_m}(a_m)[c_{\alpha_m, \alpha_m}x^{2\alpha_m} + p_{\alpha_m, \alpha_m}] + a_mp_{\alpha_m, a_m}X_m + {\rm o.t.l.t}\ {\rm exp}(X_m)\\
= &\ a_m\sigma^{\alpha_m}(a_m)c_{\alpha_m, \alpha_m}x^{2\alpha_m} + \ {\rm o.t.l.t}\ {\rm exp}(x^{2\alpha_m}),
\end{align*}}}
and hence,
{\normalsize{\begin{align*}
f^{3} = &\ (a_m\sigma^{\alpha_m}(a_m)c_{\alpha_m, \alpha_m}x^{2\alpha_m} + \ {\rm o.t.l.t}\ {\rm exp}(x^{2\alpha_m})) (a_mX_m + \dotsb + a_1x_1 + a_0)\\
= &\ a_m\sigma^{\alpha_m}(a_m)c_{\alpha_m, \alpha_m}x^{2\alpha_m}a_mX_m + \ {\rm o.t.l.t}\ {\rm exp}(x^{3\alpha_m})\\
= &\ a_m\sigma^{\alpha_m}(a_m)c_{\alpha_m, \alpha_m}[\sigma^{2\alpha_m}(a_m)x^{2\alpha_m} + p_{2\alpha_m, a_m}]X_m + \ {\rm o.t.l.t}\ {\rm exp}(x^{3\alpha_m})\\
= &\ a_m\sigma^{\alpha_m}(a_m)c_{\alpha_m,\alpha_m}\sigma^{2\alpha_m}(a_m)x^{2\alpha_m}X_m + \ {\rm o.t.l.t}\ {\rm exp}(x^{3\alpha_m})\\
= &\ a_m\sigma^{\alpha_m}(a_m)c_{\alpha_m,\alpha_m}\sigma^{2\alpha_m}(a_m)[c_{2\alpha_m, \alpha_m}x^{3\alpha_m} + p_{2\alpha_m,\alpha_m}]\\
= &\ a_m\sigma^{\alpha_m}(a_m)c_{\alpha_m,\alpha_m}\sigma^{2\alpha_m}(a_m)c_{2\alpha_m, \alpha_m}x^{3\alpha_m} + \ {\rm o.t.l.t}\ {\rm exp}(x^{3\alpha_m}).
\end{align*}}}
Continuing in this way, one can show that for $f^{k}$,
\[
f^{k} = \biggl \{a_m\prod_{l=1}^{k-1}\sigma^{l\alpha_m}(a_m)c_{l\alpha_m, \alpha_m}x^{k\alpha_m}\biggr\} + \ {\rm o.t.l.t}\ {\rm exp}(x^{k\alpha_m}),
\]
whence $0={\rm lc}(f^{k}) = a_m\prod_{l=1}^{k-1}\sigma^{l\alpha_m}(a_m)c_{l\alpha_m, \alpha_m}$, and since the elements $c$'s are central in $R$ and left invertible (Proposition \ref{coefficientes}), we have $0={\rm lc}(f^{k}) = a_m\prod_{l=1}^{k-1}\sigma^{l\alpha_m}(a_m)$. Using the $\Sigma$-compatibility of $R$, we obtain $a_m\in {\rm nil}(R)$.

Now, since
{\small{\begin{align*}
f^{k} = &\ ((a_0 + a_1X_1 + \dotsb + a_{m-1}X_{m-1}) + a_mX_m)^{k} \\
= &\ ((a_0 + a_1X_1 + \dotsb + a_{m-1}X_{m-1}) + a_mX_m)((a_0 + a_1X_1 + \dotsb + a_{m-1}X_{m-1}) + a_mX_m)\\
&\ \dotsb ((a_0 + a_1X_1 + \dotsb + a_{m-1}X_{m-1}) + a_mX_m)\ \ \ \ \ \ (k\ {\rm times})\\
= &\ [(a_0 + a_1X_1 + \dotsb + a_{m-1}X_{m-1})^{2} + (a_0 + a_1X_1 + \dotsb + a_{m-1}X_{m-1})a_mX_m\\
&\ + a_mX_m(a_0 + a_1X_1 + \dotsb + a_{m-1}X_{m-1}) + a_mX_ma_mX_m]\\
&\ \dotsb ((a_0 + a_1X_1 + \dotsb + a_{m-1}X_{m-1}) + a_mX_m)\\
= &\ (a_0 + a_1X_1 + \dotsb + a_{m-1}X_{m-1})^{k} + h,
\end{align*}}}
where $h$ is an element of $A$ which involves products of monomials with the term $a_mX_m$ on the left and the right, by  Proposition \ref{lindass}, Remark \ref{juradpr} and having in mind that $a_m\in {\rm nil}(R)$, which is an ideal of $R$ (remember that reversible implies semicommutative), the expression for $f^{k}$ reduces to $f^k = (a_0 + a_1X_1 + \dotsb + a_{m-1}X_{m-1})^{k}$. Using a similar reasoning as above, one can prove that
\[
f^{k} = a_{m-1}\prod_{l=1}^{k-1}\sigma^{l(\alpha_{m-1})}(a_{m-1})c_{l(\alpha_{m-1}), \alpha_{m-1}}x^{k\alpha_{m-1}} + \ {\rm o.t.l.t}\ {\rm exp}(x^{k\alpha_{m-1}}).
\]
Hence ${\rm lc}(f^{k}) = a_{m-1}\prod_{l=1}^{k-1}\sigma^{l\alpha_{m-1}}(a_{m-1})c_{l\alpha_{m-1}, \alpha_{m-1}}$, and so $a_{m-1}\in {\rm nil}(R)$. If we repeat this argument, it follows that $a_i\in {\rm nil}(R)$, for $0\le i\le m$.

Conversely, suppose that $a_i\in {\rm nil}(R)$, for every $i$. If $k_i$ is the minimum integer positive such that $a_i^{k_i} = 0$, for every $i$, let $k:={\rm max}\{k_i\mid 1\le i\le n\}$. It is clear that $a_i^{k}=0$, for all $i$. Let us prove that $f^{(m+1){k}+1} = 0$, and hence, $f\in {\rm nil}(A)$. Since the  expression for $f$ have $m+1$ terms, when we realize the product $f^{(m+1){k}+1}$ we have sums of products of the form
\begin{equation}\label{rigoo}
a_{i,1}X_{i,1}a_{i,2}X_{i,2}\dotsb a_{i, (m+1){k}}X_{i, (m+1){k}}a_{i,(m+1){k}+1}X_{i,(m+1){k}+1}.
\end{equation}
Note that there are exactly $(m+1)^{(m+1)k+1}$ products of the form (\ref{rigoo}). Now, since when we compute $f^{(m+1){k}+1}$ every product as (\ref{rigoo}) involves at least $k$ elements $a_i$, for some $i$, then every one of these products is equal to zero by Proposition \ref{lindass}, Remark \ref{juradpr} and the $(\Sigma, \Delta)$-compatibility of $R$ (more exactly, Proposition \ref{ReyesSuarezUMA2017Prop3.8}). In this way, every term of $f^{(m+1){k}+1}$ is equal to zero, and hence $f\in {\rm nil}(A)$.
\end{proof}
The next theorem generalizes \cite{OuyangChen2010}, Theorem 2.11. We denote ${\rm nil}(R)A:=\{f\in A\mid f= a_0 + a_1X_1 + \dotsb + a_mX_m,\ a_i\in {\rm nil}(R)\}$.
\begin{theorem}\label{2010Theorem2.11}
Let $A$ be a skew PBW extensions over a reversible and $(\Sigma,\Delta)$-compatible ring. If $f=\sum_{i=0}^{m} a_iX_i, g=\sum_{j=0}^{t} b_jY_j$ and $h=\sum_{k=0}^{l}c_kZ_k$ are elements of $A$, and $r$ is any element of $R$, then we have the following assertions:
\begin{enumerate}
\item [\rm (1)] $fg\in {\rm nil}(A) \Leftrightarrow a_ib_j\in {\rm nil}(R)$, for all $i, j$.
\item [\rm (2)] $fgr\in {\rm nil}(A) \Leftrightarrow a_ib_jr\in {\rm nil}(R)$, for all $i, j$.
\item [\rm (3)] $fgh\in {\rm nil}(A)\Leftrightarrow a_ib_jc_k\in {\rm nil}(R)$, for all $i, j, k$.
\end{enumerate}
\end{theorem}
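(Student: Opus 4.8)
The plan is to reduce everything to Theorem \ref{2010Lemma2.10}, which already characterizes membership in ${\rm nil}(A)$ in terms of the coefficients, and then to run the kind of coefficient-chasing argument used in its proof together with the $(\Sigma,\Delta)$-compatibility machinery (Propositions \ref{Reyes2015Lemma3.3}, \ref{ReyesSuarezUMA2017Prop3.8} and Lemmas \ref{2010Lemma2.7}, \ref{2010Lemma2.8}). First I would prove (1), then derive (2) as essentially the special case $h=r$ of (3), and finally prove (3) in full; alternatively (2) can be proved directly by absorbing $r$ into $g$ (replacing $g$ by $gr$ and noting $b_jr$ plays the role of the new coefficients, using Proposition \ref{lindass} and Remark \ref{juradpr} to track what $gr$ looks like).

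For the forward direction of (1): assume $fg\in{\rm nil}(A)$. By Theorem \ref{2010Lemma2.10} every coefficient of the product $fg$ lies in ${\rm nil}(R)$. The top coefficient of $fg$, with respect to the monomial order, is (up to a central left-invertible factor $c_{\alpha_m,\beta_t}$ and an application of $\sigma^{\alpha_m}$) equal to $a_m\sigma^{\alpha_m}(b_t)$; since the $c$'s are central and left invertible and $R$ is $\Sigma$-compatible, $a_m\sigma^{\alpha_m}(b_t)\in{\rm nil}(R)$ forces $a_mb_t\in{\rm nil}(R)$ by Lemma \ref{2010Lemma2.8}. Then, exactly as in the proof of Theorem \ref{2010Lemma2.10}, one peels off the leading term: because ${\rm nil}(R)$ is an ideal (reversible $\Rightarrow$ semicommutative, plus \cite{LiuZhao2006}, Lemma 3.1) and $a_m\in{\rm nil}(R)$ once we also know $fg\in{\rm nil}(A)$ — wait, more carefully: one shows by descending induction on the monomials that each $a_ib_j\in{\rm nil}(R)$, using at each stage that the already-handled coefficients are nilpotent, that products involving them can be discarded modulo ${\rm nil}(R)A$ via Proposition \ref{lindass}, Remark \ref{juradpr} and Proposition \ref{ReyesSuarezUMA2017Prop3.8}, and then reading off the new leading coefficient. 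This is the standard Armendariz-type induction and is where the bulk of the bookkeeping lives. For the converse of (1): if all $a_ib_j\in{\rm nil}(R)$, then every coefficient of every term of $fg$ is, by Remark \ref{juradpr}, a sum of products of $a_i$ with various $\sigma$-$\delta$-evaluations of $b_j$; by $(\Sigma,\Delta)$-compatibility (Propositions \ref{ReyesSuarezUMA2017Prop3.8}(3) and the Lemmas \ref{2010Lemma2.7}, \ref{2010Lemma2.8}) each such product lies in ${\rm nil}(R)$, hence in the ideal ${\rm nil}(R)$, so every coefficient of $fg$ is nilpotent and Theorem \ref{2010Lemma2.10} gives $fg\in{\rm nil}(A)$.

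Parts (2) and (3) are then obtained by iterating. For (3), write $gh$ first: by (1) applied to $g,h$ we get $gh\in{\rm nil}(A)$ iff $b_jc_k\in{\rm nil}(R)$ for all $j,k$; but this is not quite what is wanted, so instead I would argue at the level of coefficients of the triple product. Assume $fgh\in{\rm nil}(A)$; by Theorem \ref{2010Lemma2.10} all its coefficients are nilpotent, and the leading coefficient is, modulo central left-invertible factors and $\sigma$'s, $a_m\sigma^{?}(b_t)\sigma^{?}(c_l)$, which by repeated use of Lemma \ref{2010Lemma2.8} and $\Sigma$-compatibility yields $a_mb_tc_l\in{\rm nil}(R)$; then peel off terms exactly as before, invoking that ${\rm nil}(R)$ is an ideal so that once a coefficient of $f$, $g$ or $h$ is known nilpotent all monomials containing it can be pushed into ${\rm nil}(R)A$. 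The converse of (3) is again Remark \ref{juradpr} plus compatibility: every coefficient of $fgh$ is a sum of products each containing some factor of the form (evaluation of $a_i$)(evaluation of $b_j$)(evaluation of $c_k$) with $a_ib_jc_k\in{\rm nil}(R)$, and $(\Sigma,\Delta)$-compatibility together with Lemma \ref{2010Lemma2.7} moves the nilpotency through the $\sigma$'s and $\delta$'s. Part (2) is the case of (3) with $h$ the constant polynomial $r$, or directly: $fgr\in{\rm nil}(A)$ iff (by (1) with $g$ replaced by the polynomial $gr$, whose coefficients are controlled by Proposition \ref{lindass}) $a_i\cdot(\text{coeff of }gr)\in{\rm nil}(R)$, which unwinds to $a_ib_jr\in{\rm nil}(R)$.

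The main obstacle, as in Theorem \ref{2010Lemma2.10}, is the descending induction that extracts $a_ib_j$ (resp.\ $a_ib_jc_k$) from the nilpotency of the whole product: one must verify that after the leading coefficients are shown nilpotent, the corresponding leading terms can genuinely be absorbed into ${\rm nil}(R)A$ — this requires knowing that a product of monomials in which one coefficient is nilpotent has all its coefficients in the ideal ${\rm nil}(R)$, which is where Proposition \ref{lindass}, Remark \ref{juradpr}, the fact that the $c_{i,j}$ are central, and Proposition \ref{ReyesSuarezUMA2017Prop3.8} all get used simultaneously. Everything else is routine repetition of the compatibility lemmas. Throughout, the centrality hypothesis on the $c_{i,j}$ (already in force from the previous theorem) is essential to strip them off leading coefficients, and the reversibility of $R$ is what makes ${\rm nil}(R)$ an ideal so the "o.t.l.t." reductions are legitimate.
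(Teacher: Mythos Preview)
Your outline for (1) is the paper's argument: invoke Theorem~\ref{2010Lemma2.10} so that every coefficient of $fg$ lies in ${\rm nil}(R)$, strip the central invertible $c$'s, use Lemma~\ref{2010Lemma2.8} on the top coefficient to get $a_mb_t\in{\rm nil}(R)$, and descend. The one step you underspecify is the mechanism of the descent. The phrase ``reading off the new leading coefficient'' is not enough, because at level $i+j=k$ several products $a_ib_j$ can contribute to the \emph{same} coefficient of $fg$, and discarding the already-nilpotent contributions from $p+q\ge k+1$ still leaves a sum you must disentangle. The paper's device is to take the coefficient identity
\[
a_u\sigma^{\alpha_u}(b_v)c_{\alpha_u,\beta_v}+\sum_{\alpha_{u'}+\beta_{v'}=\alpha_u+\beta_v}a_{u'}\,(\sigma\text{'s and }\delta\text{'s in }b_{v'})\,c_{\alpha_{u'},\beta_{v'}}\in{\rm nil}(R)
\]
and multiply it on the right successively by $a_k,a_{k-1},\dotsc$: reversibility turns each $a_{u'}(\cdots b_{v'})a_k$ with $u'+v'\ge k+1$ into a nilpotent, isolating $a_k\sigma^{\alpha_k}(b_0)a_k\in{\rm nil}(R)$, hence $a_kb_0\in{\rm nil}(R)$, then $a_{k-1}b_1\in{\rm nil}(R)$, and so on across the diagonal. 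This is presumably what you mean by ``standard Armendariz-type induction'', but it is the crux and deserves to be stated.

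For (3) the paper does \emph{not} repeat the coefficient chase on the triple product as you propose; it simply reduces to (1). With $p:=gh$, part (1) together with Theorem~\ref{2010Lemma2.10} gives $fgh\in{\rm nil}(A)\Leftrightarrow a_i(gh)\in{\rm nil}(A)$ for every $i$; then (1) applied to $a_ig$ (whose coefficients are $a_ib_j$) and $h$ gives $a_i(gh)\in{\rm nil}(A)\Leftrightarrow a_ib_jc_k\in{\rm nil}(R)$ for all $j,k$. Your direct approach works, but this two-line reduction avoids redoing the whole induction. Part (2) is handled in the paper exactly along your second suggestion: expand $gr$ via Proposition~\ref{coefficientes}, identify ${\rm lc}(fgr)=a_m\sigma^{\alpha_m}(b_t\sigma^{\beta_t}(r))$, and run the same descent, so there you and the paper agree.
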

\begin{proof}
(1) As we see in the proof of Theorem \ref{2010Lemma2.10}, ${\rm nil}(A)\subseteq {\rm nil}(R)A$. With this in mind, consider two elements $f, g \in A$ given by $f=\sum_{i=0}^{m} a_iX_i$ and $g=\sum_{j=0}^{t} b_jY_j$ with $fg\in {\rm nil}(A)$. Let $X_i:=x_1^{\alpha_{i1}}\dotsb x_n^{\alpha_{in}}, Y_j:=x_1^{\beta_{j1}}\dotsb x_n^{\beta_{jn}}$, for all $i, j$. We have
\begin{equation*}
fg = \sum_{k=0}^{m+t} \biggl( \sum_{i+j=k} a_iX_ib_jY_j\biggr) \in {\rm nil}(A)\subseteq {\rm nil}(R)A,
\end{equation*}
and ${\rm lc}(fg)= a_m\sigma^{\alpha_m}(b_t)c_{\alpha_m, \beta_t}\in {\rm nil}(R)$. Since the elements $c_{i,j}$ are in the center of $R$, then $c_{\alpha_m,\beta_t}$ are also in the center of $R$, whence $a_m\sigma^{\alpha_m}(b_t)\in {\rm nil}(R)$, and by Lemma \ref{2010Lemma2.8} it follows that $a_mb_t\in {\rm nil}(R)$. The idea is to prove that $a_pb_q\in {\rm nil}(R)$, for $p+q\ge 0$.  We proceed  by induction. Suppose that $a_pb_q\in {\rm nil}(R)$, for $p+q=m+t, m+t-1, m+t-2, \dotsc, k+1$, for some $k>0$. By Lemma \ref{2010Lemma2.7},  we obtain $a_pX_pb_qY_q\in {\rm nil}(R)A$ for these values of $p+q$. In this way, it is sufficient to consider the sum of the products $a_uX_ub_vY_v$, where $u+v=k, k-1,k-2,\dotsc, 0$. Fix $u$ and $v$. Consider the sum of all terms  of $fg$  having exponent $\alpha_u+\beta_v$. By Proposition \ref{lindass}, Remark \ref{juradpr} and the assumption $fg\in {\rm nil}(A)$, we know that the sum of all coefficients of all these terms  can be written as
{\small{\begin{equation}\label{Feder4}
a_u\sigma^{\alpha_u}(b_v)c_{\alpha_u, \beta_v} + \sum_{\alpha_{u'} + \beta_{v'} = \alpha_u + \beta_v} a_{u'}\sigma^{\alpha_{u'}} ({\rm \sigma's\ and\ \delta's\ evaluated\ in}\ b_{v'})c_{\alpha_{u'}, \beta_{v'}}\in {\rm nil}(R).
\end{equation}}}
As we suppose above, $a_pb_q\in {\rm nil}(R)$ for $p+q=m+t, m+t-1, \dotsc, k+1$, so Lemma \ref{2010Lemma2.7} guarantees that the product $a_p(\sigma'$s and $\delta'$s evaluated in $b_q$), for any order of $\sigma'$s and $\delta'$s, is an element of ${\rm nil}(R)$. Since $R$ is reversible, then $({\rm \sigma's\ and\ \delta's\ evaluated\ in}\ b_{q})a_p\in {\rm nil}(R)$. In this way, multiplying (\ref{Feder4}) on the right by $a_k$, and using the fact that the elements $c$'s are in the center of $R$, we obtain that the sum
{\small{\begin{equation}\label{do1234}
a_u\sigma^{\alpha_u}(b_v)a_kc_{\alpha_u, \beta_v} + \sum_{\alpha_{u'} + \beta_{v'} = \alpha_u + \beta_v} a_{u'}\sigma^{\alpha_{u'}} ({\rm \sigma's\ and\ \delta's\ evaluated\ in}\ b_{v'})a_kc_{\alpha_{u'}, \beta_{v'}}
\end{equation}}}
is an element of ${\rm nil}(R)$,  whence, $a_u\sigma^{\alpha_u}(b_0)a_k\in {\rm nil}(R)$. Since $u+v=k$ and $v=0$, then $u=k$, so $a_k\sigma^{\alpha_k}(b_0)a_k\in {\rm nil}(R)$, from which $a_k\sigma^{\alpha_k}(b_0)\in {\rm nil}(R)$ and hence $a_kb_0\in {\rm nil}(R)$ by Lemma \ref{2010Lemma2.8}. Therefore, we now have to study the expression (\ref{Feder4}) for $0\le u \le k-1$ and $u+v=k$. If we multiply (\ref{do1234}) on the right by $a_{k-1}$, then
{\small{\[
a_u\sigma^{\alpha_u}(b_v)a_{k-1}c_{\alpha_u, \beta_v} + \sum_{\alpha_{u'} + \beta_{v'} = \alpha_u + \beta_v} a_{u'}\sigma^{\alpha_{u'}} ({\rm \sigma's\ and\ \delta's\ evaluated\ in}\ b_{v'})a_{k-1}c_{\alpha_{u'}, \beta_{v'}}
\]}}
is also an element of ${\rm nil}(R)$. Using a similar reasoning as above, we can see that the element $a_u\sigma^{\alpha_u}(b_1)a_{k-1}c_{\alpha_u, \beta_1}$ belongs to ${\rm nil}(R)$. Since the elements $c$'s are central and left invertible, $a_u\sigma^{\alpha_u}(b_1)a_{k-1}\in {\rm nil}(R)$, and using the fact $u=k-1$, we have $a_{k-1}\sigma^{\alpha_{k-1}}(b_1)\in {\rm nil}(R)$, from which $a_{k-1}b_1\in {\rm nil}(R)$. Continuing in this way we prove that $a_ib_j\in {\rm nil}(R)$, for $i+j=k$. Therefore $a_ib_j\in {\rm nil}(R)$, for $0\le i\le m$ and $0\le j\le t$.\\

Conversely, for the elements $f, g$ above, suppose that $a_ib_j\in {\rm nil}(R)$. From Lemma \ref{2010Lemma2.7}  we know that $a\sigma^{\alpha}(\delta^{\beta}(b))$ and $a\delta^{\beta}(\sigma^{\alpha}(b))$ are elements of ${\rm nil}(R)$, for every $\alpha,\beta\in \mathbb{N}^{n}$. Now, having in mind that for every product of the form $a_iX_ib_jY_j$, where
$X_i:=x_1^{\alpha_{i1}}\dotsb x_n^{\alpha_{in}}$ and $Y_j:=x_1^{\beta_{j1}}\dotsb x_n^{\beta_{jn}}$, we have the following equality
\begin{align*}
a_iX_ib_jY_j = &\ a_i\sigma^{\alpha_i}(b_j)x^{\alpha_i}x^{\beta_j} + a_ip_{\alpha_{i1}, \sigma_{i2}^{\alpha_{i2}}(\dotsb (\sigma_{in}^{\alpha_{in}}(b_j)))} x_2^{\alpha_{i2}}\dotsb x_n^{\alpha_{in}}x^{\beta_j} \\
+ &\ a_i x_1^{\alpha_{i1}}p_{\alpha_{i2}, \sigma_3^{\alpha_{i3}}(\dotsb (\sigma_{{in}}^{\alpha_{in}}(b_j)))} x_3^{\alpha_{i3}}\dotsb x_n^{\alpha_{in}}x^{\beta_j} \\
+ &\ a_i x_1^{\alpha_{i1}}x_2^{\alpha_{i2}}p_{\alpha_{i3}, \sigma_{i4}^{\alpha_{i4}} (\dotsb (\sigma_{in}^{\alpha_{in}}(b_j)))} x_4^{\alpha_{i4}}\dotsb x_n^{\alpha_{in}}x^{\beta_j}\\
+ &\ \dotsb + a_i x_1^{\alpha_{i1}}x_2^{\alpha_{i2}} \dotsb x_{i(n-2)}^{\alpha_{i(n-2)}}p_{\alpha_{i(n-1)}, \sigma_{in}^{\alpha_{in}}(b_j)}x_n^{\alpha_{in}}x^{\beta_j} \\
+ &\ a_i x_1^{\alpha_{i1}}\dotsb x_{i(n-1)}^{\alpha_{i(n-1)}}p_{\alpha_{in}, b_j}x^{\beta_j},
\end{align*}
by Proposition \ref{lindass}, when we compute every summand of $a_iX_ib_jY_j$ we obtain products of the coefficient $a_i$ with several evaluations of $b_j$ in $\sigma$'s and $\delta$'s depending of the coordinates of $\alpha_i$ (Remark \ref{juradpr}), and since $a\sigma_i(\delta^{\beta_i}(b))$ and $a\delta^{\beta_i}(\sigma^{\alpha_i}(b))$ are elements of ${\rm nil}(R)$, then every coefficient of each term of the expansion $fg$ given by
\[
fg = \sum_{k=0}^{m+t}\biggl(\sum_{i+j=k}a_iX_ib_jY_j\biggr),
\]
is an element of ${\rm nil}(R)$. Therefore, Theorem \ref{2010Lemma2.10} implies that the product $fg$ is an element of $R$.\\

(2) Let $g=b_0 + b_1Y_1 + \dotsb + b_tY_t$ be an element of $A$ with $Y_t\succ \dotsb \succ Y_1$. Then
\begin{align*}
gr = &\ (b_0+b_1Y_1 + \dotsb + b_tY_t)r \\
= &\ b_0r + b_1Y_1r + \dotsb + b_tY_tr \\
= &\ b_0r + b_1(\sigma^{\beta_1}(r)Y_1 + p_{\beta_1, r}) + \dotsb +  b_t(\sigma^{\beta_t}(r)Y_t + p_{\beta_t, r})\\
= &\ b_0r + b_1\sigma^{\beta_1}(r)Y_1 + b_1p_{\beta_1, r} + \dotsb + b_t\sigma^{\beta_t}(r)Y_t + b_tp_{\beta_t, r}
\end{align*}
where $p_{\beta_j,r}=0$, or $\deg(p_{\beta_j,r})<|\alpha|$ if
$p_{\beta_j, r}\neq 0$, for $j=1, \dotsc, t$ (Proposition \ref{coefficientes}). Note that ${\rm lc}(gr) = b_t\sigma^{\beta_t}(r)$. Then
{\small{\begin{align*}
fgr = &\ (a_0 + a_1X_1 + \dotsb + a_mX_m)(b_0r + b_1\sigma^{\beta_1}(r)Y_1 + b_1p_{\beta_1, r} + \dotsb + b_t\sigma^{\beta_t}(r)Y_t + b_tp_{\beta_t, r})\notag \\
= &\ a_0b_0r + a_0b_1\sigma^{\beta_1}(r)Y_1 + a_0b_1p_{\beta_1,r} + \dotsb + a_0b_t\sigma^{\beta_t}(r)Y_t + a_0b_tp_{\beta_t,r} \notag \\
+ &\ a_1X_1b_0r + a_1X_1b_1\sigma^{\beta_1}(r)Y_1 + a_1X_1b_1p_{\beta_1,r} + \dotsb + a_1X_1b_t\sigma^{\beta_t}(r)Y_t + a_1X_1b_tp_{\beta_t,r}\\
+ &\ \dotsb + a_mX_mb_0r + a_mX_mb_1\sigma^{\beta_1}(r)Y_1 + a_mX_mb_1p_{\beta_1,r} + \dotsb + a_mX_mb_t\sigma^{\beta_t}(r)Y_t \\
+ &\ a_mX_mb_tp_{\beta_t,r}\notag \\
= &\ a_0b_0r + a_0b_1\sigma^{\beta_1}(r)Y_1 + a_0b_1p_{\beta_1,r} + \dotsb + a_0b_t\sigma^{\beta_t}(r)Y_t + a_0b_tp_{\beta_t,r}\notag \\
+ &\ a_1[\sigma^{\alpha_1}(b_0r)X_1 + p_{\alpha_1,b_0r}] + a_1[\sigma^{\alpha_1}(b_1\sigma^{\beta_1}(r))X_1 + p_{\alpha_1, b_1\sigma^{\beta_1}(r)}]Y_1 \\
+ &\ a_1[\sigma^{\alpha_1}(b_1) + p_{\alpha_1,b_1}]p_{\beta_1,r} + \dotsb + a_1[\sigma^{\alpha_1}(b_t\sigma^{\beta_t}(r))X_1 + p_{\alpha_1,b_t\sigma^{\beta_t}(r)}]Y_t\\
+ &\ a_1[\sigma^{\alpha_1}(b_t)X_1 + p_{\alpha_1,b_t}]p_{\beta_t, r} + \dotsb + a_m[\sigma^{\alpha_m}(b_0r) + p_{\alpha_m,b_0r}] \\
+ &\ a_m[\sigma^{\alpha_m}(b_1\sigma^{\beta_1}(r)X_1 + p_{\alpha_1, b_1\sigma^{\beta_1}(r)}]Y_1 + a_m[\sigma^{\alpha_m}(b_1)X_m + p_{\alpha_m,b_1}]p_{\beta_1,r}\\
+ &\ \dotsb + a_m[\sigma^{\alpha_m}(b_t\sigma^{\beta_t}(r))X_m + p_{\alpha_m,b_t\sigma^{\beta_t}(r)}]Y_t + a_m[\sigma^{\alpha_m}(b_t)X_m + p_{\alpha_m, b_t}]p_{\beta_t,r},
\end{align*}}}
whence ${\rm lc}(fgr) = a_m\sigma^{\alpha_m}(b_t\sigma^{\beta_t}(r))$, and since $R$ is $\Sigma$-compatible, Lemma \ref{2010Lemma2.8} implies that $a_mb_tr\in {\rm nil}(R)$. Now, Lemma \ref{2010Lemma2.7} guarantees that every term of any polynomial containing the product $a_mb_tr$ in the expression above for $fgr$ is an element of ${\rm nil}(R)A$. In this way, using an monomial order we can repeat this argument for the next monomial of $fgr$ less than ${\rm lc}(fgr)$, and continuing this process until the first monomial to obtain that the elements $a_ib_jr$ are in $\in {\rm nil}(R)$, for all $i, j$.

Conversely, suppose that $a_ib_jr\in {\rm nil}(R)$, for every $i, j$, as above. As we saw above,
\begin{align*}
gr = &\ (b_0+b_1Y_1 + \dotsb + b_tY_t)r \\
= &\ b_0r + b_1Y_1r + \dotsb + b_tY_tr \\
= &\ b_0r + b_1(\sigma^{\beta_1}(r)Y_1 + p_{\beta_1, r}) + \dotsb +  b_t(\sigma^{\beta_t}(r)Y_t + p_{\beta_t, r})\\
= &\ b_0r + b_1\sigma^{\beta_1}(r)Y_1 + b_1p_{\beta_1, r} + \dotsb + b_t\sigma^{\beta_t}(r)Y_t + b_tp_{\beta_t, r}
\end{align*}
where $p_{\beta_j,r}=0$, or $\deg(p_{\beta_j,r})<|\alpha|$ if
$p_{\beta_j, r}\neq 0$, for $j=1, \dotsc, t$. Since $a_ib_jr\in {\rm nil}(R)$, for every $i, j$, Lemma \ref{2010Lemma2.7} implies that $a_ib_j\sigma^{\alpha}(\delta^{\beta}(r))$ and $a_ib_j\delta^{\beta}(\sigma^{\alpha}(r))$ are elements of ${\rm nil}(R)$, for every $\alpha, \beta\in \mathbb{N}^{n}$. In this way, Proposition \ref{lindass} and Remark \ref{juradpr} applied to expression above for the product $fgr$ imply that every one of these summands have coefficients in ${\rm nil}(R)$, and since ${\rm nil}(R)$ is an ideal of $R$ because $R$ is reversible, Theorem \ref{2010Lemma2.7} shows that $fgr\in {\rm nil}(A)$. \\

(3) The equivalence follows from (1) and (2) considering the product $gh$ as the only element $p\in A$.
\end{proof}
\begin{remark}
About Theorem \ref{2010Theorem2.11} (1) we have the following two important observations. (a) In \cite{Reyes2018}, Definition 3.1, the first author introduced the skew $\Pi$-Armendariz rings in the following way: If $A$ is a skew PBW extension over a ring $R$, then $R$ is called a {\em skew}-$\Pi$ {\em Armendariz ring}, if for elements $f=\sum_{i=0}^{m} a_iX_i,\ g=\sum_{j=0}^{t} b_jY_j$ of $A$, $fg\in {\rm nil}(A)$ implies that $a_ib_j\in {\rm nil}(R)$, for every $0\le i\le m$ and $0\le j\le t$. The importance of Theorem \ref{2010Theorem2.11} is explicited, since we are proving in this theorem that skew PBW extensions over skew $\Pi$-Armendariz rings are contained in skew PBW extensions over reversible and $(\Sigma,\Delta)$-compatible rings. (b) In \cite{ReyesSuarezUMA2018}, Definition 4.1, the authors introduced the condition (SA1): if $A$ is a skew PBW extension of $R$, we say that $R$ satisfies the condition (SA1), if whenever $fg=0$ for $f=a_0+a_1X_1+\dotsb + a_mX_m$ and $g=b_0 + b_1Y_1 + \dotsb + b_tY_t$ elements of $A$, then $a_ib_j = 0$, for every $i, j$. It is clear that Theorem \ref{2010Theorem2.11} extends this condition.
\end{remark}
The next theorem extends \cite{OuyangChen2010}, Theorem 2.12.
\begin{theorem}\label{2010Theorem 2.12}
If $A$ is a skew PBW extension over a reversible and $(\Sigma,\Delta)$-compatible ring, then $R$ is weak symmetric if and only if $A$ is weak symmetric.
\end{theorem}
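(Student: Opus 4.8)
The plan is to reduce the statement about $A$ being weak symmetric to the assertion of Theorem \ref{2010Theorem2.11}. Recall that $A$ is weak symmetric means: for all $f,g,h\in A$, $fgh\in {\rm nil}(A)$ implies $fhg\in {\rm nil}(A)$. So take arbitrary $f=\sum_{i=0}^m a_iX_i$, $g=\sum_{j=0}^t b_jY_j$, $h=\sum_{k=0}^l c_kZ_k$ in $A$ and suppose $fgh\in {\rm nil}(A)$. By Theorem \ref{2010Theorem2.11}(3), this is equivalent to $a_ib_jc_k\in {\rm nil}(R)$ for all $i,j,k$. Since $R$ is weak symmetric, this gives $a_ic_kb_j\in {\rm nil}(R)$ for all $i,j,k$. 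Applying Theorem \ref{2010Theorem2.11}(3) once more — now with the roles of the middle and last polynomials swapped, i.e.\ to the product $f\cdot h\cdot g$ whose "triple coefficients" are exactly $a_ic_kb_j$ — we conclude $fhg\in {\rm nil}(A)$. This proves $A$ is weak symmetric.

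For the converse, suppose $A$ is weak symmetric; I want to show $R$ is weak symmetric. Let $a,b,c\in R$ with $abc\in {\rm nil}(R)$. Viewing $a,b,c$ as constant (degree-zero) elements of $A$ via the embedding $R\subseteq A$, we have $abc\in {\rm nil}(R)\subseteq {\rm nil}(A)$ (a nilpotent element of $R$ is certainly nilpotent in $A$; alternatively this is the trivial direction of Theorem \ref{2010Lemma2.10}). Since $A$ is weak symmetric, $acb\in {\rm nil}(A)$. But $acb\in R$, and ${\rm nil}(A)\cap R={\rm nil}(R)$ (again by Theorem \ref{2010Lemma2.10}, or simply because a product of constants is constant and nilpotency in $A$ restricted to $R$ is nilpotency in $R$). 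Hence $acb\in {\rm nil}(R)$, so $R$ is weak symmetric.

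The only subtlety worth spelling out is the bookkeeping in the forward direction: when we invoke Theorem \ref{2010Theorem2.11}(3) for the product $fhg$, we should be careful that its hypothesis is stated as $fgh\in{\rm nil}(A)\Leftrightarrow a_ib_jc_k\in{\rm nil}(R)$, so applying it to $fhg$ requires identifying the coefficients of $f$, $h$, $g$ (in that order) as $a_i$, $c_k$, $b_j$ and reading the equivalence as $fhg\in{\rm nil}(A)\Leftrightarrow a_ic_kb_j\in{\rm nil}(R)$ for all $i,k,j$. This is purely a relabeling of indices and uses nothing beyond what Theorem \ref{2010Theorem2.11} already grants, since that theorem applies to \emph{any} three elements of $A$. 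I do not expect a genuine obstacle here; all the real work — controlling nilpotent elements of $A$ in terms of those of $R$ under $(\Sigma,\Delta)$-compatibility and reversibility — has already been carried out in Theorems \ref{2010Lemma2.10} and \ref{2010Theorem2.11}, and the present statement is essentially a formal corollary of the triple-product characterization together with the definition of weak symmetry.
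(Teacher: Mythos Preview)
Your proposal is correct and follows essentially the same approach as the paper: the forward direction is identical (apply Theorem \ref{2010Theorem2.11}(3), use weak symmetry of $R$ on the coefficients, and apply Theorem \ref{2010Theorem2.11}(3) again), while for the converse the paper simply invokes the fact that a subring of a weak symmetric ring is weak symmetric, which is precisely what your explicit argument with constant polynomials establishes.
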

\begin{proof}
Having in mind that a subring of a weak symmetric ring is also a weak symmetric ring, we will only prove one implication. Suppose that $R$ is a weak symmetric ring. If  $f=\sum_{i=0}^{s} a_iX_i, g=\sum_{j=0}^{t} b_jY_j$ and $h=\sum_{k=0}^{l}c_kZ_k$ are elements of $A$ with $fgh\in {\rm nil}(A)$, then Theorem \ref{2010Theorem2.11} implies that $a_ib_jc_k\in {\rm nil}(r)$, for every $i, j, k$, and hence $a_ic_kb_j\in {\rm nil}(R)$, for each $i, j, k$, since $R$ is weak symmetric. Finally, Theorem \ref{2010Theorem2.11} shows that $fhg\in {\rm nil}(A)$.
\end{proof}
\begin{corollary}
If $R$ is a $\Sigma$-rigid ring, then $R$ is weak symmetric if and only if $A$ is weak symmetric.
\end{corollary}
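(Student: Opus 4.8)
The plan is to reduce everything to the already-established machinery: Proposition \ref{ReyesSuarez2018Theorem3.9}, which states that a skew PBW extension $A$ of $R$ is reduced if and only if $R$ is $\Sigma$-rigid, together with Theorem \ref{2010Theorem 2.12}. First I would observe that by \cite{ReyesSuarezUMA2018}, Proposition 3.4, every $\Sigma$-rigid ring is $(\Sigma,\Delta)$-compatible, and that a $\Sigma$-rigid ring is reduced (this is recalled explicitly in the discussion after Definition \ref{generaldef2015}). So a $\Sigma$-rigid ring $R$ is automatically $(\Sigma,\Delta)$-compatible.

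Next I need the hypothesis ``reversible'' required by Theorem \ref{2010Theorem 2.12}. Since $R$ is $\Sigma$-rigid it is reduced, and reduced rings are symmetric (Anderson and Camillo \cite{AndersonCamillo1998}), hence reversible by the chain of implications {\em reduced} $\Rightarrow$ {\em symmetric} $\Rightarrow$ {\em reversible} recalled in the introduction. Alternatively one may argue directly: if $ab=0$ in a reduced ring then $(ba)^2 = b(ab)a = 0$, so $ba=0$. Either way, $R$ is reversible and $(\Sigma,\Delta)$-compatible, so Theorem \ref{2010Theorem 2.12} applies verbatim and gives that $R$ is weak symmetric if and only if $A$ is weak symmetric.

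There is essentially no obstacle here; the corollary is a specialization of Theorem \ref{2010Theorem 2.12} obtained by checking that the hypotheses ``reversible'' and ``$(\Sigma,\Delta)$-compatible'' both follow from $\Sigma$-rigidity. The only point requiring a moment's care is making sure the implicit genericity hypotheses of the section (the elements $c_{i,j}$ being central, which was imposed from Theorem \ref{2010Lemma2.10} onward) are still in force; these carry over since the corollary lives inside the same section as Theorem \ref{2010Theorem 2.12}. So the proof is a two-line deduction: ``Since $R$ is $\Sigma$-rigid, $R$ is reduced, hence reversible, and $R$ is $(\Sigma,\Delta)$-compatible by \cite{ReyesSuarezUMA2018}, Proposition 3.4. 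The claim now follows from Theorem \ref{2010Theorem 2.12}.'' I would present it exactly that concisely.
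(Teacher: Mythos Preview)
Your proposal is correct and follows essentially the same route as the paper: the paper's proof simply observes the chain reduced $\Rightarrow$ symmetric $\Rightarrow$ weak symmetric (so in particular $R$ is reversible) and then invokes Theorem~\ref{2010Theorem 2.12}. Your write-up is actually a bit more explicit than the paper's, since you spell out the $(\Sigma,\Delta)$-compatibility step via \cite{ReyesSuarezUMA2018}, Proposition~3.4, which the paper leaves implicit.
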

\begin{proof}
Since we have the implications reduced $\Rightarrow$  symmetric $\Rightarrow$ weak symmetric, then the assertion follows from Theorem \ref{2010Theorem 2.12}.
\end{proof}
\begin{corollary}[\cite{OuyangChen2010}, Corollaries 2.13 and 2.14]
Let $B$ be a reversible ring. Then we have the following:
\begin{enumerate}
\item [\rm (1)] $B$ is weak symmetric if and only if $B[x]$ is weak symmetric.
\item [\rm (2)] If $B$ is $\sigma$-compatible, then $B$ is weak symmetric if and only if $B[x;\sigma]$ is weak symmetric.
\item [\rm (3)] If $B$ is $\delta$-compatible, then $B$ is weak symmetric if and only if the differential polynomial ring $B[x;\sigma]$ is weak symmetric.
\item [\rm (4)] Let $\alpha$ be an endomorphism and $\delta$ and $\alpha$-derivation of $R$. If $R$ is $\alpha$-rigid, then $R$ is weak symmetric if and only if $R[x;\alpha,\delta]$ is weak symmetric.
\end{enumerate}
\end{corollary}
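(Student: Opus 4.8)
The plan is to derive all four parts as specializations of Theorem \ref{2010Theorem 2.12} to the one-variable case. By Example \ref{mentioned}, each ring in the list is a skew PBW extension in a single indeterminate: $B[x]=\sigma(B)\langle x\rangle$ with $\Sigma=\{\mathrm{id}_B\}$ and $\Delta=\{0\}$; $B[x;\sigma]=\sigma(B)\langle x\rangle$ with $\Sigma=\{\sigma\}$ and $\Delta=\{0\}$; the differential polynomial ring $B[x;\delta]=\sigma(B)\langle x\rangle$ with $\Sigma=\{\mathrm{id}_B\}$ and $\Delta=\{\delta\}$; and $R[x;\alpha,\delta]=\sigma(R)\langle x\rangle$ with $\Sigma=\{\alpha\}$ and $\Delta=\{\delta\}$ (the hypotheses of Example \ref{mentioned} on $\sigma_j(x_i)$ and $\delta_j(x_i)$ for $i<j$ are vacuous when $n=1$, and $\mathrm{id}$ and $\alpha$ are injective). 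Moreover, since $n=1$, the only structure constant of Definition \ref{gpbwextension}(iv) is $c_{1,1}=1$, which is central in the coefficient ring, so the standing centrality hypothesis needed for Theorems \ref{2010Theorem2.11} and \ref{2010Theorem 2.12} holds automatically. Hence in every case it remains only to check that the coefficient ring is reversible and $(\Sigma,\Delta)$-compatible in the sense of Definition \ref{Definition3.52008}, after which Theorem \ref{2010Theorem 2.12} yields the stated equivalence.

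Next I would dispatch the compatibility conditions case by case. In (1), $\Sigma=\{\mathrm{id}\}$ and $\Delta=\{0\}$ make both $\Sigma$- and $\Delta$-compatibility trivial, and $B$ is reversible by hypothesis. In (2), $\Delta=\{0\}$ again gives $\Delta$-compatibility for free, while the classical condition ``$ab=0\iff a\sigma(b)=0$'' upgrades to $\Sigma$-compatibility (``$ab=0\iff a\sigma^{k}(b)=0$ for all $k\in\mathbb{N}$'') by a short induction, applying the hypothesis with $\sigma^{k-1}(b)$ in place of $b$; together with reversibility of $B$ this places us in the hypothesis of Theorem \ref{2010Theorem 2.12}. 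Part (3) is the mirror image: $\Sigma=\{\mathrm{id}\}$ gives $\Sigma$-compatibility, and $\delta$-compatibility ``$ab=0\Rightarrow a\delta(b)=0$'' iterates to ``$ab=0\Rightarrow a\delta^{k}(b)=0$'', which is precisely $\Delta$-compatibility for $\Delta=\{\delta\}$. For (4), recall from Section \ref{SigmarigidandSigmaDeltacompatible} that a $\Sigma$-rigid ring is reduced, hence symmetric, hence reversible, and that every $\Sigma$-rigid ring is $(\Sigma,\Delta)$-compatible; applied with $\Sigma=\{\alpha\}$, $\alpha$-rigidity of $R$ therefore supplies both hypotheses, and one may equally well just cite the Corollary immediately preceding this one.

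With the hypotheses in hand, I would finish each part by invoking Theorem \ref{2010Theorem 2.12} (or, for (4), its $\Sigma$-rigid corollary): in every case ``$B$ (resp.\ $R$) is weak symmetric $\iff$ the corresponding Ore extension is weak symmetric''. I do not expect a genuine obstacle, since the argument is purely a specialization of Theorem \ref{2010Theorem 2.12} to $n=1$. The only point deserving a moment's care is the bookkeeping that identifies the one-variable $\sigma$- and $\delta$-compatibility familiar from the literature on Ore extensions with the $\Sigma$- and $\Delta$-compatibility of Definition \ref{Definition3.52008}, which is exactly the iteration noted above together with $\mathbb{N}^{1}=\mathbb{N}$; I would also point out that the ring in part (3) is to be read as the differential polynomial ring $B[x;\delta]$ (there $\sigma=\mathrm{id}$).
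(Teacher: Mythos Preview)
Your proposal is correct and matches the paper's approach: the paper states this corollary with no proof, treating it as an immediate specialization of Theorem \ref{2010Theorem 2.12} (and of the $\Sigma$-rigid corollary just before it) to the one-variable Ore case. Your checks of the $(\Sigma,\Delta)$-compatibility and reversibility hypotheses, and your observation that part (3) should be read as $B[x;\delta]$, are exactly the bookkeeping the paper leaves implicit.
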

With the aim of establishing Theorems \ref{2010Theorem2.17} and  \ref{2010Theorem 3.9}, we need to formulate a criterion which allows us to extend the family $\Sigma$ of injective endomorphisms, and the family of $\Sigma$-derivations $\Delta$ of the ring $R$ to the ring $A$. For the next proposition consider the injective endomorphisms $\sigma_i\in \Sigma$, and the $\sigma_i$-derivations $\delta_i\in \Delta$ $(1\le i\le n)$ formulated in Proposition \ref{sigmadefinition} (compare with \cite{Artamonov2015} where the derivations of skew PBW extensions were computed partially). We include its proof with the objective of appreciating the importance of the assumptions established in the result.
\begin{proposition}[\cite{ReyesSuarezClifford2017}, Theorem 5.1]\label{ReyesSuarez2017CliffordTheorem5.1}
Let $A$ be a skew PBW  extension of a ring $R$. Suppose that $\sigma_i\delta_j=\delta_j\sigma_i,\ \delta_i\delta_j=\delta_j\delta_i$, and $\delta_k(c_{i,j}) = \delta_k(r_l^{(i,j)}) = 0$, for $1\le i, j, l\le n$, where $c_{i,j}$ and $r_l^{(i,j)}$ are as in Definition \ref{gpbwextension}. If $\overline{\sigma_{k}}:A\to A$ and $\overline{\delta_k}:A\to A$ are the functions given by $\overline{\sigma_{k}}(f):=\sigma_k(a_0)+\sigma_k(a_1)X_1 + \dotsb + \sigma_k(a_m)X_m$ and $\overline{\delta_k}(f):=\delta_k(a_0) + \delta_k(a_1)X_1 + \dotsb + \delta_k(a_m)X_m$, for every $f=a_0 + a_1X_1+\dotsb + a_mX_m\in A$, respectively, and $\overline{\sigma_k}(r):=\sigma_i(k)$, for every $1\le i\le n$, then $\overline{\sigma_k}$ is an injective endomorphism of $A$ and $\overline{\delta_k}$ is a $\overline{\sigma_k}$-derivation of $A$. Let $\overline{\Sigma}:=\{\overline{\sigma_1},\dotsc, \overline{\sigma_n}\}
$ and $\overline{\Delta}:=\{\overline{\delta_1},\dotsc, \overline{\delta_n}\}$.
\end{proposition}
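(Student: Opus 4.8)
The plan is to verify that $\overline{\sigma_k}$ and $\overline{\delta_k}$ have the stated properties by reducing everything to the action of $\sigma_k,\delta_k$ on $R$ together with the multiplication rules of $A$ recorded in Propositions \ref{sigmadefinition}, \ref{coefficientes} and \ref{lindass}. Both maps are well defined and additive because ${\rm Mon}(A)$ is a (unique) left $R$-basis of $A$ and they are defined coefficient-wise; moreover $\overline{\sigma_k}(1)=1$, $\overline{\sigma_k}|_R=\sigma_k$, $\overline{\delta_k}|_R=\delta_k$, and since $x_i=1\cdot x_i$ one gets $\overline{\sigma_k}(x_i)=x_i$ and $\overline{\delta_k}(x_i)=0$ for every $i$. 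Since $A$ is generated as a ring by $R\cup\{x_1,\dots,x_n\}$, the crucial reduction is: it suffices to establish, for arbitrary $f\in A$, $r\in R$ and $1\le i\le n$, the four one-step identities $\overline{\sigma_k}(fr)=\overline{\sigma_k}(f)\sigma_k(r)$, $\overline{\sigma_k}(fx_i)=\overline{\sigma_k}(f)x_i$, $\overline{\delta_k}(fr)=\overline{\sigma_k}(f)\delta_k(r)+\overline{\delta_k}(f)r$ and $\overline{\delta_k}(fx_i)=\overline{\delta_k}(f)x_i$. Indeed, writing any $g\in A$ as $\sum_\beta c_\beta x_1^{\beta_1}\cdots x_n^{\beta_n}$ and peeling off one generator at a time from the right, an immediate induction on the degree of the appended monomial upgrades these to $\overline{\sigma_k}(fg)=\overline{\sigma_k}(f)\overline{\sigma_k}(g)$ and $\overline{\delta_k}(fg)=\overline{\sigma_k}(f)\overline{\delta_k}(g)+\overline{\delta_k}(f)g$; and injectivity of $\overline{\sigma_k}$ is then immediate, since $\overline{\sigma_k}(f)=0$ forces $\sigma_k(a_j)=0$ for all $j$ and each $\sigma_i$ is injective (Proposition \ref{sigmadefinition}).

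Next I would prove the two one-step identities involving an $R$-factor. For $f=\sum_j a_jX_j$, Proposition \ref{lindass} and Remark \ref{juradpr} express $fr$ (and its normal form) with coefficients of the shape $a_j$ times a composition of maps from $\Sigma$ and $\Delta$ applied to $r$. Applying $\overline{\sigma_k}$, using multiplicativity of $\sigma_k$ on $R$ together with the commutation of $\sigma_k$ with every $\sigma_i$ and every $\delta_i$ (the latter by hypothesis), one pulls $\sigma_k$ through each such composition and replaces $r$ by $\sigma_k(r)$, recovering exactly $\overline{\sigma_k}(f)\sigma_k(r)$. For $\overline{\delta_k}(fr)$ one instead applies the $\sigma_k$-Leibniz rule to each coefficient $a_j\cdot(\text{a }\Sigma,\Delta\text{-word on }r)$ and moves $\delta_k$ inward using $\sigma_i\delta_k=\delta_k\sigma_i$ and $\delta_i\delta_k=\delta_k\delta_i$; collecting the resulting terms yields $\overline{\sigma_k}(f)\delta_k(r)+\overline{\delta_k}(f)r$.

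Finally, the identities involving $x_i$ are where the remaining hypotheses enter. Normalising $fx_i=\sum_j a_j(X_jx_i)$ invokes only the relations $x_jx_i=c_{i,j}x_ix_j+r_0^{(i,j)}+\sum_l r_l^{(i,j)}x_l$, so the coefficients of the normal form of $fx_i$ are the $a_j$ multiplied by products of the structure constants $c_{i,j}$ and $r_l^{(i,j)}$. Since $\sigma_k$ is multiplicative on $R$ and compatible with these constants, $\overline{\sigma_k}$ commutes with this normalisation and returns $\overline{\sigma_k}(f)x_i$; and since $\delta_k$ is a $\sigma_k$-derivation with $\delta_k(c_{i,j})=\delta_k(r_l^{(i,j)})=0$, it annihilates every product of those constants, so $\overline{\delta_k}$ merely differentiates the $a_j$ and again commutes with the normalisation, giving $\overline{\delta_k}(f)x_i$. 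I expect the main obstacle to be organisational rather than conceptual: faithfully tracking the many summands produced by Proposition \ref{lindass} and Remark \ref{juradpr} and checking that, once $\overline{\sigma_k}$ or $\overline{\delta_k}$ has been applied, they regroup into the asserted products — the commutation hypotheses $\sigma_i\delta_j=\delta_j\sigma_i$, $\delta_i\delta_j=\delta_j\delta_i$ and the vanishing conditions $\delta_k(c_{i,j})=\delta_k(r_l^{(i,j)})=0$ being precisely what makes each regrouping step valid. For this reason it is worth isolating the four one-step identities as separate lemmas, so that the passage to arbitrary $f,g$ becomes purely formal.
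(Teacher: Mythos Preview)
Your approach differs from the paper's in organisation. The paper verifies the Leibniz rule directly on monomial terms: it expands both $\overline{\delta_k}(a_iX_ib_jY_j)$ and $\overline{\sigma_k}(a_iX_i)\overline{\delta_k}(b_jY_j)+\overline{\delta_k}(a_iX_i)b_jY_j$ via Proposition~\ref{coefficientes} (writing $X_ib_j=\sigma^{\alpha_i}(b_j)X_i+p_{\alpha_i,b_j}$ and $X_iY_j=c_{\alpha_i,\beta_j}x^{\alpha_i+\beta_j}+p_{\alpha_i,\beta_j}$), first in the illustrative case $bx_j\cdot ax_i$ to exhibit why each hypothesis is needed, and then in general by matching the surviving terms after the hypotheses are applied. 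Your reduction to four one-step identities for right multiplication by a single generator, followed by induction on the length of $g$, is a more modular decomposition that avoids carrying two full monomials simultaneously; the essential ingredients---commuting $\delta_k$ past each $\sigma_i$ and each $\delta_i$, and using $\delta_k(c_{i,j})=\delta_k(r_l^{(i,j)})=0$ to pass $\overline{\delta_k}$ through the monomial-normalisation---are exactly those used in the paper's term matching.

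Two points deserve tightening. First, in your $x_i$-step the coefficients of the normal form of $X_jx_i$ are not literally products of the bare constants $c_{*,*}$ and $r_l^{(*,*)}$: commuting $x_i$ past several variables in $X_j$ also produces $\sigma$- and $\delta$-images of those constants, so you must invoke $\delta_k\sigma_i=\sigma_i\delta_k$ and $\delta_k\delta_i=\delta_i\delta_k$ once more to conclude that $\delta_k$ still annihilates every such coefficient. Second, your treatment of $\overline{\sigma_k}$ tacitly uses $\sigma_k\sigma_i=\sigma_i\sigma_k$ (in the $fr$ identity) and $\sigma_k$-invariance of the structure constants $c_{i,j},\,r_l^{(i,j)}$ (in the $fx_i$ identity), neither of which appears among the stated hypotheses; the paper does not address this either, disposing of the endomorphism claim with the single sentence that it is ``clear,'' so the gap is shared rather than peculiar to your argument.
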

\begin{proof}
It is clear that $\overline{\sigma_i}$ is an injective endomorphism of $A$, and that $\overline{\delta_i}$ is an additive map of $A$, for every $1\le i\le n$. Next, we show that $\overline{\delta_i}(fg)=\overline{\sigma_i}(f)\overline{\delta_i}(g) + \overline{\delta_i}(f)g$, for $f,g\in A$.

Consider the elements $f=a_0 + a_1X_1+a_2X_2+\dotsb + a_mX_m$ and $g=b_0 + b_1Y_1+b_2Y_2+\dotsb + b_tY_t$. Since $\overline{\sigma_k}$ and $\overline{\delta_k}$ are additive, for every $i$, it is enough to show that
\begin{align}\label{serpi}
\overline{\delta_k}(a_iX_ib_jY_j) = \overline{\sigma_k}(a_iX_i)\overline{\delta_k}(b_jY_j) + \overline{\delta_k}(a_iX_i)b_jY_j,
\end{align}
for every $1\le i, j \le n$. As an illustration of the necessity of the assumptions above, consider the next particular computations:
{\small{\begin{align}
\overline{\delta_k}(bx_jax_i) = &\ \overline{\delta_k}b(\sigma_j(a)x_j + \delta_j(a))x_i) = \overline{\delta_k}(b\sigma_j(a)x_jx_i + b\delta_j(a)x_1)\notag \\
= &\ \overline{\delta_k}\biggl(b\sigma_j(a)\biggl(c_{i,j}x_ix_j + r_0 + \sum_{l=1}^{n}r_lx_l\biggr) + b\delta_j(a)x_i\biggr)\notag \\
= &\ \overline{\delta_k}\biggl(b\sigma_j(a)c_{i,j}x_ix_j + b\sigma_j(a)r_0 + b\sigma_j(a)\sum_{l=1}^{n}r_lx_l + b\delta_j(a)x_i\biggr) \notag \\
= &\ \delta_k(b\sigma_j(a)c_{i,j})x_ix_j + \delta_k(b\sigma_j(a)r_0) + \sum_{l=1}^{n}\delta_k(b\sigma_j(a)r_l)x_l + \delta_k(b\delta_j(a))x_i\notag
\end{align}}}
or what is the same,
{\small{\begin{align}
\overline{\delta_k}(bx_jax_i) = &\ \sigma_k(b\sigma_j(a))\delta_j(c_{i,j})x_ix_j + \delta_k(b\sigma_j(a))c_{i,j}x_ix_j + \sigma_k(b\sigma_j(a))\delta_i(r_0) \notag \\
+ &\ \delta_k(b\sigma_j(a))r_0
+ \sum_{l=1}^{n}\sigma_k(b\sigma_j(a))\delta_i(r_l)x_l + \sum_{l=1}^{n} \delta_k(b\sigma_j(a))r_lx_l \notag \\
+ &\ \sigma_k(b)\delta_k(\delta_j(a))x_i + \delta_k(b)\delta_j(a)x_i\notag \\
&\ \sigma_k(b)\sigma_k(\sigma_j(a))\delta_j(c_{i,j})x_ix_j + \sigma_k(b)\delta_k(\sigma_j(a))c_{i,j}x_ix_j + \delta_k(b)\sigma_j(a)c_{i,j}x_ix_j \notag \\
+ &\ \sigma_k(b)\sigma_k(\sigma_j(a))\delta_i(r_0) + \sigma_k(b)\delta_k(\sigma_j(a))r_0 + \delta_k(b)\sigma_j(a)r_0 \notag \\
+ &\ \sum_{l=1}^{n} \sigma_k(b)\sigma_k(\sigma_j(a))\delta_i(r_l)x_l + \sum_{l=1}^{n} \sigma_k(b)\delta_k(\sigma_j(a))r_lx_l +\sum_{l=1}^{n}\delta_k(b)\sigma_j(a)r_lx_l \notag \\
+ &\ \sigma_k(b)\delta_k(\delta_j(a))x_i + \delta_k(b)\delta_j(a)x_i.\label{colooo}
\end{align}}}
On the other hand,
{\small{\begin{align}
\overline{\sigma_k}(bx_j)\overline{\delta_k}(ax_i) + \overline{\delta_k}(bx_j)ax_i = &\ \sigma_k(b)x_j \delta_k(a)x_i + \delta_k(b)x_jax_i\notag \\
= &\ \sigma_k(b)(\sigma_j(\delta_k(a))x_j + \delta_j(\delta_k(a)))x_i \notag \\
+ &\ \delta_k(b)(\sigma_j(a)x_j + \delta_j(a))x_i\notag\\
= &\ \sigma_k(b)\sigma_j(\delta_k(a))x_jx_i + \sigma_k(b)\delta_j(\delta_k(a))x_i \notag \\
+ &\ \delta_k(b)\sigma_j(a)x_jx_i + \delta_k(b)\delta_j(a)x_i\notag \\
= &\ \sigma_k(b)\sigma_j(\delta_k(a))\biggl(c_{i,j}x_ix_j + r_0 + \sum_{l=1}^{n}r_lx_l\biggr)\notag \\
+ &\ \sigma_k(b)\delta_j(\delta_k(a))x_i \notag \\
+ &\ \delta_k(b)\sigma_j(a)\biggl(c_{i,j}x_ix_j + r_0 +
\sum_{l=1}^{n} r_lx_l\biggr) + \delta_k(b)\delta_j(a)x_i\notag\\
 = &\ \sigma_k(b)\sigma_j(\delta_k(a))c_{i,j}x_ix_j + \sigma_k(b)\sigma_j(\delta_k(a))r_0 \notag \\
+ &\ \sigma_k(b)\sigma_j(\delta_k(a))\sum_{l=1}^{n} r_lx_l + \sigma_k(b)\delta_j(\delta_k(a))x_i\notag \\
+ &\  \delta_k(b)\sigma_j(a)c_{i,j}x_ix_j + \delta_k(b)\sigma_j(a)r_0 \notag \\
+ &\ \delta_k(b)\sigma_j(a)\sum_{l=1}^{n}r_lx_l + \delta_k(b)\delta_j(a)x_i.\label{coloooo}
\end{align}}}
If we want that the expressions (\ref{colooo}) and (\ref{coloooo}) represent the same value, that is,
{\small{\[
\overline{\delta_k}(bx_jax_i) =  \overline{\sigma_k}(bx_j)\overline{\delta_k}(ax_i) + \overline{\delta_k}(bx_j)ax_i,\ \ \ \ \ 1\le i, j, k, \le n
\]}}
then we have to impose that $\sigma_i\delta_j = \delta_j\sigma_i$, $\delta_i\delta_j = \delta_j\delta_i$,  $\delta_k(c_{i,j}) = \delta_k(r_l^{(i,j)}) = 0$, for $1\le i, j, l\le n$, where $c_{i,j}$ and $r_l^{(i,j)}$ are the elements established in Definition
\ref{gpbwextension}. This justifies the assumptions in the proposition.

Now, the proof of the general case, that is, the expression (\ref{serpi}), it follows from the above reasoning and Remark \ref{juradpr}. Let us see the details. Consider the following expressions:
{\small{\begin{align}
\overline{\delta_k}(a_iX_ib_jY_j) = &\ \overline{\delta_k}(a_i(\sigma^{\alpha_i}(b_j)X_i + p_{\alpha_i, b_j})Y_j)=\overline{\delta_k}(a_i\sigma^{\alpha_i}(b_j)X_iY_j + a_ip_{\alpha_i, b_j}Y_j)\notag \\
= &\ \overline{\delta_k}(a_i\sigma^{\alpha_i}(b_j)(c_{\alpha_i, \beta_j}x^{\alpha_i+\beta_j} + p_{\alpha_i,\beta_j}) + a_ip_{\alpha_i,b_j}Y_j)\notag \\
= &\ \overline{\delta_k}(a_i\sigma^{\alpha_i}(b_j)c_{\alpha_i,\beta_j}x^{\alpha_i+\beta_j} + a_i\sigma^{\alpha_i}(b_j)p_{\alpha_i, \beta_j} + a_ip_{\alpha_i,b_j}Y_j)\notag \\
= &\ \overline{\delta_k}(a_i\sigma^{\alpha_i}(b_j)c_{\alpha_i,\beta_j})x^{\alpha_i+\beta_j} + \overline{\delta_k}(a_i\sigma^{\alpha_i}(b_j)p_{\alpha_i,\beta_j}) + \overline{\delta_k}(a_ip_{\alpha_i,b_j}Y_j)\notag\\
= &\ \sigma_k(a_i\sigma^{\alpha_i}(b_j))\delta_k(c_{\alpha_i,\beta_j})x^{\alpha_i+\beta_j} + \delta_k(a_i\sigma^{\alpha_i}(b_j))c_{\alpha_i,\beta_j}x^{\alpha_i+\beta_j}\notag \\
+ &\ \overline{\delta_k}(a_i\sigma^{\alpha_i}(b_j)p_{\alpha_i,\beta_j}) + \overline{\delta_k}(a_ip_{\alpha_i,b_j}Y_j)\notag \\
= &\ \sigma_k(a_i)\sigma_k(\sigma^{\alpha_i}(b_j))\delta_k(c_{\alpha_i,\beta_j})x^{\alpha_i+\beta_j} + \sigma_k(a_i)\delta_k(\sigma^{\alpha_i}(b_j))c_{\alpha_i,\beta_j}x^{\alpha_i+\beta_j}\notag \\
+ &\ \delta_k(a_i)\sigma^{\alpha_i}(b_j)c_{\alpha_i,\beta_j}x^{\alpha_i+\beta_j} + \overline{\delta_k}(a_i\sigma^{\alpha_i}(b_j)p_{\alpha_i,\beta_j}) + \overline{\delta_k}(a_ip_{\alpha_i,b_j}Y_j), \notag
\end{align}}}
and
{\small{\begin{align}
\overline{\sigma_k}(a_iX_i)\overline{\delta_k}(b_jY_j) + \overline{\delta_k}(a_iX_i)b_jY_j = &\ \sigma_k(a_i)X_i\delta_k(b_j)Y_j + \delta_k(a_i)X_ib_jY_j\notag \\
= &\ \sigma_k(a_i)(\sigma^{\alpha_i}(\delta_k(b_j))X_i + p_{\alpha_i,\delta_k(b_j)})Y_j \notag \\
+ &\ \delta_k(a_i)(\sigma^{\alpha_i}(b_j)X_i + p_{\alpha_i, b_j})Y_j\notag \\
= &\ \sigma_k(a_i)(\sigma^{\alpha_i}(\delta_k(b_j))X_iY_j) + \sigma_k(a_i)p_{\alpha_i, \delta_k(b_j)}Y_j\notag \\
+ &\ \delta_k(a_i)\sigma^{\alpha_i}(b_j)X_iY_j + \delta_k(a_i)p_{\alpha_i,b_j}Y_j\notag \\
= &\ \sigma_k(a_i)\sigma^{\alpha_i}(\delta_k(b_j))(c_{\alpha_i,\beta_j}x^{\alpha_i+\beta_j} + p_{\alpha_i,\beta_j}) \notag \\
+ &\ \sigma_k(a_i)p_{\alpha_i,\delta_k(b_j)}Y_j\notag \\
+ &\ \delta_k(a_i)\sigma^{\alpha_i}(b_j)(c_{\alpha_i,\beta_j}x^{\alpha_i+\beta_j} + p_{\alpha_i,\beta_j})\notag \\
+ &\ \delta_k(a_i)p_{\alpha_i,b_j}Y_j\notag \\
= &\ \sigma_k(a_i)\sigma^{\alpha_i}(\delta_k(b_j))c_{\alpha_i,\beta_j}x^{\alpha_i+\beta_j} \notag \\
+ &\ \sigma_k(a_i)\sigma^{\alpha_i}(\delta_k(b_j))p_{\alpha_i, \beta_j} \notag \\
+ &\ \sigma_k(a_i)p_{\alpha_i, \delta_k(b_j)}Y_j + \delta_k(a_i)\sigma^{\alpha_i}(b_j)c_{\alpha_i, \beta_j}x^{\alpha_i+\beta_j}\notag \\
+ &\ \delta_k(a_i)\sigma^{\alpha_i}(b_j)p_{\alpha_i, \beta_j} +
\delta_k(a_i)p_{\alpha_i, b_j}Y_j.\notag
\end{align}}}
By assumption, we have the equalities $\sigma_k(a_i)\delta_k(\sigma^{\alpha_i}(b_j)) = \sigma_k(a_i)\sigma^{\alpha_i}(\delta_k(b_j))$ and $\delta_k(c_{\alpha_i,\beta_j}) = 0$, which means that we need to prove the relation
{\small{\begin{align}
\overline{\delta_k}(a_i\sigma^{\alpha_i}(b_j)p_{\alpha_i,\beta_j}) + \overline{\delta_k}(a_ip_{\alpha_i,b_j}Y_j) = &\
 \sigma_k(a_i)\sigma^{\alpha_i}(\delta_k(b_j))p_{\alpha_i, \beta_j} + \sigma_k(a_i)p_{\alpha_i, \delta_k(b_j)}Y_j \notag \\
+ &\ \delta_k(a_i)\sigma^{\alpha_i}(b_j)p_{\alpha_i, \beta_j} + \delta_k(a_i)p_{\alpha_i, b_j}Y_j.\label{metal}
\end{align}}}
However, note that this equality is a consequence of the linearity of $\delta_k$, Remark \ref{juradpr}, and the assumptions established in the formulation of the theorem. More precisely, using these facts we have
{\normalsize{\begin{align}
\overline{\delta}_k(a_i\sigma^{\alpha_i}(b_j)p_{\alpha_i, \beta_j}) = &\ \overline{\sigma_k}(a_i\sigma^{\alpha_i}(b_j))\overline{\delta_k}(p_{\alpha_i,\beta_j}) + \overline{\delta_k}(a_i\sigma^{\alpha_i}(b_j))p_{\alpha_i, \beta_j}\notag \\
= &\ \sigma_k(a_i)\sigma_k(\sigma^{\alpha_i}(b_j))\overline{\delta_k}(p_{\alpha_i,\beta_j}) + \sigma_k(a_i) \delta_k(\sigma^{\alpha_i}(b_j))p_{\alpha_i, \beta_j} \notag \\
+ &\ \delta_k(a_i)\sigma^{\alpha_i}(b_j)p_{\alpha_i,\beta_j}\notag \\
= &\ \sigma_k(a_i) \delta_k(\sigma^{\alpha_i}(b_j))p_{\alpha_i, \beta_j} + \delta_k(a_i)\sigma^{\alpha_i}(b_j)p_{\alpha_i,\beta_j}\notag \\
= &\ \sigma_k(a_i)\sigma^{\alpha_i}(\delta_k(b_j))p_{\alpha_i,\beta_j} + \delta_k(a_i)\sigma^{\alpha_i}(b_j)p_{\alpha_i, \beta_j},\label{acdc}
\end{align}}}
and,
{\normalsize{\begin{align}
\overline{\delta_k}(a_ip_{\alpha_i,b_j}Y_j) = &\ \overline{\sigma_k}(a_i)\overline{\delta_k}(p_{\alpha_i,b_j}Y_j) + \overline{\delta_k}(a_i)p_{\alpha_i,b_j}Y_j\notag \\
= &\ \sigma_k(a_i)p_{\alpha_i, \delta_k(b_j)}Y_j + \delta_k(a_i)p_{\alpha_i,b_j}Y_j,\label{marliiii}
\end{align}}}
where we can see that expression (\ref{metal}) is precisely the sum of (\ref{acdc}) and (\ref{marliiii}). Therefore $\overline{\delta_i}$ is a $\overline{\sigma_i}$-derivation of $A$.
\end{proof}
With Proposition \ref{ReyesSuarez2017CliffordTheorem5.1} in our hands, we formulate Theorem \ref{2010Theorem2.17} which extends \cite{OuyangChen2010}, Theorem 2.17. To this end, consider the skew PBW extension $A'$ induced by injective endomorphisms and derivations established in Proposition \ref{ReyesSuarez2017CliffordTheorem5.1}, i.e., $A' = \sigma(A)\langle x_1',\dotsc, x_n'\rangle$. We remark that using algorithms established by Reyes and Su\'arez (2017b) one can prove that $A'$ is a left free $A$-module considering adequate relations between the indeterminates $x_1',\dotsc, x_n'$. For the sets of injective endomorphisms $\overline{\Sigma}$ and $\overline{\Sigma}$-derivations $\overline{\Delta}$ formulated in Proposition \ref{ReyesSuarez2017CliffordTheorem5.1}, consider a definition of $(\overline{\Sigma},\overline{\Delta})$-compatible in a similar way to the Definition \ref{Definition3.52008}. Suppose that the elements $c_{i,j}$ in Definition \ref{gpbwextension} (iv) are central in $R$, for all $i, j$.
\begin{theorem}\label{2010Theorem2.17}
If $A$ is a skew PBW extension over an $\Sigma$-rigid ring $R$, then $A$ is weak symmetric if and only if $A'$ is weak symmetric.
\end{theorem}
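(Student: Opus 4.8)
The plan is to reduce everything to the fact that, for a $\Sigma$-rigid base ring, both $A$ and $A'$ are reduced, and reduced rings are weak symmetric. First I would record that, since $R$ is $\Sigma$-rigid, Proposition \ref{ReyesSuarez2018Theorem3.9} shows that $R$ is reduced and $(\Sigma,\Delta)$-compatible and that $A$ itself is reduced; in particular $A$ is reversible, and by the chain reduced $\Rightarrow$ symmetric $\Rightarrow$ weak symmetric (see the Introduction and \cite{OuyangChen2010}, Proposition 2.1) the ring $A$ is already weak symmetric. Hence the substantive part of the statement is to prove that $A'$ is weak symmetric, and for this it suffices to show that $A'$ is reduced. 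Now $A'=\sigma(A)\langle x_1',\dotsc,x_n'\rangle$ is a skew PBW extension of $A$ whose intrinsic families of endomorphisms and derivations are $\overline{\Sigma}$ and $\overline{\Delta}$ from Proposition \ref{ReyesSuarez2017CliffordTheorem5.1} (the standing hypotheses $\sigma_i\delta_j=\delta_j\sigma_i$, $\delta_i\delta_j=\delta_j\delta_i$, $\delta_k(c_{i,j})=\delta_k(r_l^{(i,j)})=0$ are exactly what makes this construction legitimate), so by Proposition \ref{ReyesSuarez2018Theorem3.9} applied to $A'$ over $A$, the reducedness of $A'$ is equivalent to $A$ being $\overline{\Sigma}$-rigid.

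The key step is therefore to show that $A$ is $\overline{\Sigma}$-rigid. Take $f=\sum_{i=0}^{m}a_iX_i\in A$ with $f\,\overline{\sigma^{\gamma}}(f)=0$ for some $\gamma\in\mathbb{N}^{n}$, where $\overline{\sigma^{\gamma}}:=\overline{\sigma_1}^{\gamma_1}\dotsb\overline{\sigma_n}^{\gamma_n}$. By the coefficientwise definition of the $\overline{\sigma_k}$'s in Proposition \ref{ReyesSuarez2017CliffordTheorem5.1} one has $\overline{\sigma^{\gamma}}(f)=\sum_{i=0}^{m}\sigma^{\gamma}(a_i)X_i$, i.e. a polynomial of $A$ with the same monomials $X_i$ and coefficients $\sigma^{\gamma}(a_i)$. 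Since $R$ is reversible and $(\Sigma,\Delta)$-compatible and, by the standing hypothesis of this section, the $c_{i,j}$ are central in $R$, Theorem \ref{2010Theorem2.11} (1) applies to the product $f\,\overline{\sigma^{\gamma}}(f)=0\in{\rm nil}(A)$; as $R$ is reduced, ${\rm nil}(R)=\{0\}$, so we obtain $a_i\sigma^{\gamma}(a_j)=0$ for all $i,j$. Taking $i=j$ gives $a_i\sigma^{\gamma}(a_i)=0$, and $\Sigma$-rigidity of $R$ (Definition \ref{generaldef2015}) forces $a_i=0$ for every $i$; hence $f=0$, which is precisely $\overline{\Sigma}$-rigidity of $A$. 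Then Proposition \ref{ReyesSuarez2018Theorem3.9} for $A'$ over $A$ yields that $A'$ is reduced, hence symmetric, hence weak symmetric, and since $A$ is also weak symmetric the biconditional follows.

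The only delicate points I expect are (a) making sure the skew Armendariz behaviour "$fg=0\Rightarrow a_ib_j=0$" is legitimately available — here it is, via Theorem \ref{2010Theorem2.11} (1) under the centrality assumption on the $c_{i,j}$, or alternatively from the fact that $\Sigma$-rigid rings satisfy condition (SA1) of \cite{ReyesSuarezUMA2018} — and (b) the bookkeeping that $\overline{\sigma^{\gamma}}$ really acts coefficientwise, which is immediate from Proposition \ref{ReyesSuarez2017CliffordTheorem5.1}. If instead one prefers not to pass through reducedness of $A'$, the main obstacle becomes verifying directly that $A$ is $(\overline{\Sigma},\overline{\Delta})$-compatible — which can be done by combining Proposition \ref{lindass}, Remark \ref{juradpr} and Proposition \ref{Reyes2015Lemma3.3} to push the vanishing of coefficient products through the terms $p_{\alpha,\beta}$ and through the $\sigma$'s and $\delta$'s evaluated at the coefficients of $g$ — and then Theorem \ref{2010Theorem 2.12}, applied to the skew PBW extension $A'$ over the reversible $(\overline{\Sigma},\overline{\Delta})$-compatible ring $A$, finishes the argument.
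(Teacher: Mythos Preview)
Your argument is correct and in fact cleaner than the paper's. The paper proceeds by showing directly that $A$ is $(\overline{\Sigma},\overline{\Delta})$-compatible: it takes $f,g\in A$ with $fg=0$, reproves from scratch the skew Armendariz implication $a_ib_j=0$ via an induction on the leading monomials (essentially redoing the argument of Theorem \ref{2010Theorem2.11} in the reduced case, using Proposition \ref{Reyes2015Lemma3.3}), then checks term-by-term that $f\overline{\sigma^{\alpha}}(g)=f\overline{\delta^{\beta}}(g)=0$ and conversely, and finally invokes Theorem \ref{2010Theorem 2.12} for the extension $A'$ over the reversible $(\overline{\Sigma},\overline{\Delta})$-compatible ring $A$.

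You instead bypass the compatibility verification entirely by proving the stronger statement that $A$ is $\overline{\Sigma}$-rigid: from $f\,\overline{\sigma^{\gamma}}(f)=0$ you appeal to Theorem \ref{2010Theorem2.11} (1) (legitimate since $R$ is $(\Sigma,\Delta)$-compatible, reversible, and the $c_{i,j}$ are central by the standing hypothesis) to get $a_i\sigma^{\gamma}(a_i)=0$, hence $a_i=0$ by $\Sigma$-rigidity of $R$. Then Proposition \ref{ReyesSuarez2018Theorem3.9} applied to $A'$ over $A$ yields $A'$ reduced, hence weak symmetric. This is shorter precisely because you reuse Theorem \ref{2010Theorem2.11} rather than rederiving its reduced-case content, and because $\overline{\Sigma}$-rigidity packages reducedness and $(\overline{\Sigma},\overline{\Delta})$-compatibility simultaneously. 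Your closing remark that one could alternatively verify $(\overline{\Sigma},\overline{\Delta})$-compatibility directly and then apply Theorem \ref{2010Theorem 2.12} is exactly the route the paper takes; what each approach buys is that the paper's is self-contained at the level of coefficient manipulations, while yours highlights that under $\Sigma$-rigidity both $A$ and $A'$ are in fact reduced, so the biconditional holds because both sides are always true.
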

\begin{proof}
As we saw in Section \ref{SigmaDeltaweaksymmetricskewPBWextensions}, if $R$ is $\Sigma$-rigid, then $R$ is reduced, or equivalently, $A$ is reduced whence $A$ is reversible. The aim is to show that $A$ is $(\overline{\Sigma}, \overline{\Delta})$-compatible. From Proposition \ref{ReyesSuarez2018Theorem3.9} we also know that $R$ is $(\Sigma, \Delta)$-compatible.

Consider elements $f=a_0+a_1X_1+\dotsb + a_mX_m,\ g=b_0 + b_1Y_1 + \dotsb + b_tY_t$ in $A$ with $fg=0$ and let us see that $a_ib_j=0$, for every $i, j$. Since
\begin{align*}
fg = &\ (a_0+a_1X_1+\dotsb + a_mX_m)(b_0+b_1Y_1+\dotsb + b_tY_t)\\
= &\ \sum_{k=0}^{m+t} \biggl(\sum_{i+j=k} a_iX_ib_jY_j\biggr),
\end{align*}
then ${\rm lc}(fg)= a_m\sigma^{\alpha_m}(b_t)c_{\alpha_m, \beta_t}=0$ whence $a_m\sigma^{\alpha_m}(b_t)=0$ ($c_{\alpha_m, \beta_b}$ is invertible), and by Proposition \ref{Reyes2015Lemma3.3} (4), $a_mb_t=0$. The idea is to prove that $a_pb_q=0$, for $p+q\ge 0$. We proceed  by induction. Suppose that $a_pb_q=0$, for $p+q=m+t, m+t-1, m+t-2, \dotsc, k+1$, for some $k>0$. By Proposition \ref{Reyes2015Lemma3.3} (5) we obtain $a_pX_pb_qY_q=0$ for these values of $p+q$. In this way we only consider the sum of the products $a_uX_ub_vY_v$, where $u+v=k, k-1,k-2,\dotsc, 0$. Fix $u$ and $v$. Consider the sum of all terms  of $fg$  having exponent $\alpha_u+\beta_v$. By Proposition \ref{lindass}, Remark \ref{juradpr}, and the assumption $fg=0$, the sum of all coefficients of all these terms  can be written as
{\small{\begin{equation}\label{Federer}
a_u\sigma^{\alpha_u}(b_v)c_{\alpha_u, \beta_v} + \sum_{\alpha_{u'} + \beta_{v'} = \alpha_u + \beta_v} a_{u'}\sigma^{\alpha_{u'}} ({\rm \sigma's\ and\ \delta's\ evaluated\ in}\ b_{v'})c_{\alpha_{u'}, \beta_{v'}} = 0.
\end{equation}}}
By assumption we know that $a_pb_q=0$ for $p+q=m+t, m+t-1, \dotsc, k+1$.  So, Proposition \ref{Reyes2015Lemma3.3} (3) guarantees that the product
\[a_p({\rm \sigma's\ and\ \delta's\ evaluated\ in}\ b_{q})\ \ \ \ \ \ \ ({\rm any\  order\ of}\ \sigma's\ {\rm and}\ \delta's)
\]
is equal to zero. Then  $[({\rm \sigma's\ and\ \delta's\ evaluated\ in}\ b_{q})a_p]^2=0$ and hence we obtain the equality $({\rm \sigma's\ and\ \delta's\ evaluated\ in}\ b_{q})a_p=0$ ($R$ is reduced). In this way, multiplying (\ref{Federer}) by $a_k$, and using the fact that the elements $c_{i,j}$ in Definition \ref{gpbwextension} (iv) are in the center of $R$,
{\small{\begin{equation}\label{doooooo}
a_u\sigma^{\alpha_u}(b_v)a_kc_{\alpha_u, \beta_v} + \sum_{\alpha_{u'} + \beta_{v'} = \alpha_u + \beta_v} a_{u'}\sigma^{\alpha_{u'}} ({\rm \sigma's\ and\ \delta's\ evaluated\ in}\ b_{v'})a_kc_{\alpha_{u'}, \beta_{v'}} = 0,
\end{equation}}}
whence, $a_u\sigma^{\alpha_u}(b_0)a_k=0$. Since $u+v=k$ and $v=0$, then $u=k$, so $a_k\sigma^{\alpha_k}(b_0)a_k=0$, i.e., $[a_k\sigma^{\alpha_k}(b_0)]^{2}=0$, from which $a_k\sigma^{\alpha_k}(b_0)=0$ and $a_kb_0=0$ by Proposition \ref{Reyes2015Lemma3.3} (4). Therefore, we now have to study the expression (\ref{Federer}) for $0\le u \le k-1$ and $u+v=k$. If we multiply (\ref{doooooo}) by $a_{k-1}$ we obtain
{\scriptsize{\begin{equation}
a_u\sigma^{\alpha_u}(b_v)a_{k-1}c_{\alpha_u, \beta_v} + \sum_{\alpha_{u'} + \beta_{v'} = \alpha_u + \beta_v} a_{u'}\sigma^{\alpha_{u'}} ({\rm \sigma's\ and\ \delta's\ evaluated\ in}\ b_{v'})a_{k-1}c_{\alpha_{u'}, \beta_{v'}} = 0.
\end{equation}}}
Using a similar reasoning as above, we can see that $a_u\sigma^{\alpha_u}(b_1)a_{k-1}c_{\alpha_u, \beta_1}=0$. Since $A$ is bijective, $a_u\sigma^{\alpha_u}(b_1)a_{k-1}=0$, and using the fact $u=k-1$, we have $[a_{k-1}\sigma^{\alpha_{k-1}}(b_1)]=0$, which imply $a_{k-1}\sigma^{\alpha_{k-1}}(b_1)=0$, that is, $a_{k-1}b_1=0$. Continuing in this way we prove that $a_ib_j=0$ for $i+j=k$. Hence $a_ib_j=0$, for $0\le i\le m$ and $0\le j\le t$, and therefore $a_i\sigma^{\alpha}(b_j)) = a_i\delta^{\beta}(b_j)=0$, for all $\alpha, \beta\in \mathbb{N}^{n}$, since $R$ is $(\Sigma,\Delta)$-compatible. In this way, when we consider the expressions
\begin{align*}
f\overline{\sigma^{\alpha}}(g) = &\ (a_0+a_1X_1 + \dotsb + a_mX_m)({\sigma^{\alpha}}(b_0) + {\sigma^{\alpha}}(b_1)Y_1 + \dotsb + {\sigma^{\alpha}}(b_t)Y_t)\\
= &\ \sum_{k=0}^{m+t} \biggl(\sum_{i+j=k} a_iX_i\sigma^{\alpha}(b_j)Y_j\biggr)\\
= &\ \sum_{k=0}^{m+t} \biggl(\sum_{i+j=k} a_i[\sigma^{\alpha_i}(\sigma^{\alpha}(b_j))X_i + p_{\alpha_i, \sigma^{\alpha}(b_j)}]Y_j\biggr)\\
= &\ \sum_{k=0}^{m+t} \biggl(\sum_{i+j=k} a_i\sigma^{\alpha_i}(\sigma^{\alpha}(b_j))X_iY_j + a_ip_{\alpha_i, \sigma^{\alpha}(b_j)}Y_j\biggr)\\
= &\ \sum_{k=0}^{m+t} \biggl(\sum_{i+j=k} a_i\sigma^{\alpha_i}(\sigma^{\alpha}(b_j))[c_{\alpha_i,\beta_j}x^{\alpha_i + \beta_j} + p_{\alpha_i,\beta_j}] + a_ip_{\alpha_i, \sigma^{\alpha}(b_j)}Y_j\biggr)
\end{align*}
and
\begin{align*}
f\overline{\delta^{\beta}}(g) = &\ (a_0+a_1X_1 + \dotsb + a_mX_m)({\delta^{\beta}}(b_0) + {\delta^{\beta}}(b_1)Y_1 + \dotsb + {\delta^{\beta}}(b_t)Y_t)\\
= &\ \sum_{k=0}^{m+t} \biggl(\sum_{i+j=k} a_iX_i\delta^{\beta}(b_j)Y_j\biggr)\\
= &\ \sum_{k=0}^{m+t} \biggl(\sum_{i+j=k} a_i[\sigma^{\alpha_i}(\delta^{\beta}(b_j))X_i + p_{\alpha_i, \delta^{\beta}(b_j)}]Y_j\biggr)\\
= &\ \sum_{k=0}^{m+t} \biggl(\sum_{i+j=k} a_i\sigma^{\alpha_i}(\delta^{\beta}(b_j))X_iY_j + a_ip_{\alpha_i, \delta^{\beta}(b_j)}Y_j\biggr)
\end{align*}
or equivalently,
\begin{align*}
f\overline{\delta^{\beta}}(g) = &\ \sum_{k=0}^{m+t} \biggl(\sum_{i+j=k} a_i\sigma^{\alpha_i}(\delta^{\beta}(b_j))[c_{\alpha_i,\beta_j}x^{\alpha_i + \beta_j} + p_{\alpha_i,\beta_j}] + a_ip_{\alpha_i, \delta^{\beta}(b_j)}Y_j\biggr),
\end{align*}
Proposition \ref{lindass} and Remark \ref{juradpr} imply that $f\overline{\sigma^{\alpha}}(g) = f\overline{\delta^{\beta}}(g) = 0$, for every $\alpha,\beta\in \mathbb{N}^{n}$. In a similar way, if we start with the equality $f\overline{\sigma^{\alpha}}(g) =0$, then we can show that $fg=0$, which means that $A$ is $(\Sigma,\Delta)$-compatible. In this way, since we have showed that $A$ is reversible and $(\overline{\Sigma}, \overline{\Delta})$-compatible, the assertion which we are proving it follows from Theorem \ref{2010Theorem 2.12}.
\end{proof}
\section{Skew PBW extensions over weak $(\Sigma,\Delta)$-symmetric rings}\label{SigmaDeltaweaksymmetricskewPBWextensions}
In \cite{OuyangChen2010}, Definition 2,  Ouyang and Chen 2010 introduced the notion of weak $(\alpha,\delta)$-symmetric ring in the following way: a ring $B$ with an endomorphism $\sigma$ and an $\sigma$-derivation $\delta$ is said to be {\em weak} $\sigma$-{\em symmetric} provided that $abc\in {\rm nil}(B)\Leftrightarrow ac\sigma(b)\in {\rm nil}(B)$, for any elements $a, b, c\in B$. $B$ is said to be {\em weak} $\delta$-symmetric, if for $a, b, c\in R$, $abc\in {\rm nil}(B)$ implies $ac\delta(b)\in {\rm nil}(B)$. If $B$ is both {\em weak} $\sigma$-{\em symmetric} and {\em weak} $\delta$-{\em symmetric}, $B$ is called a {\em weak} $(\Sigma,\Delta)$-{\em symmetric} ring. With respect to the relation between weak symmetric ring and weak $(\alpha,\delta)$-symmetric rings, there is an example of a weak symmetric ring which is not weak $(\alpha,\delta)$-symmetric, see \cite{OuyangChen2010}, Example 3.2. Note that for every subring $S$ of a weak $(\alpha,\delta)$-symmetric ring $B$ which satisfies $\alpha(S)\subseteq S$ and $\delta(S)\subseteq S$, it follows that $S$ is also a weak weak $(\alpha,\delta)$-symmetric ring. With these  definitions in mind, we present in a natural way the notion of weak $(\Sigma,\Delta)$-symmetric ring for a ring $R$ with a family of endomorphisms $\Sigma$ and a family of $\Sigma$-derivations  $\Delta$.
\begin{definition}\label{2010Definition2}
Let $R$ be a ring with a family of endomorphisms of $R$ and a family of $\Sigma=\{\sigma_1,\dotsc,\sigma_n\}$-derivations $\Delta=\{\delta_1,\dotsc,\delta_n\}$. $R$ is called {\em weak} $\Sigma$-{\em symmetric}, if $abc\in {\rm nil}(R)\Rightarrow ac\sigma_i(b)\in {\rm nil}(R)$, for every $i$ and each elements $a, b, c\in R$. $R$ is said to be {\em weak} $\Delta$-{\em symmetric}, if $abc\in {\rm nil}(R) \Rightarrow ac\delta_i(b)\in {\rm nil}(R)$, for every $i$ and each elements $a, b, c\in R$. In the case $R$ is both weak $\Sigma$-symmetric and weak $\Delta$-symmetric, we say that $R$ is a {\em weak} $(\Sigma,\Delta)$-symmetric ring.
\end{definition}
\begin{definition}
If $R$ is a ring with a family of endomorphisms of $R$ and a family of $\Sigma=\{\sigma_1,\dotsc,\sigma_n\}$-derivations $\Delta=\{\delta_1,\dotsc,\delta_n\}$, then an ideal $I$ of $R$ is said to be an {\em weak}-{\em symmetric ideal}, if $abc\in {\rm nil}(R)\Rightarrow ac\sigma_i(b), ac\delta_i(b)\in {\rm nil}(R)$, for each $i$ and every elements $a, b, c\in I$.
\end{definition}
The next proposition extends \cite{OuyangChen2010}, Proposition 3.6.
\begin{proposition}\label{2010Proposition3.6}
If $R$ is an abelian ring with $\sigma_i(e)=e$ and $\delta_i(e)=0$, for any idempotent element $e$ of $R$, then the following statements are equivalent:
\begin{enumerate}
\item [\rm (1)] $R$ is a weak $(\Sigma,\Delta)$-symmetric ring.
\item [\rm (2)] $eR$ and $(1-e)R$ are weak $(\Sigma,\Delta)$-symmetric ideals.
\end{enumerate}
\end{proposition}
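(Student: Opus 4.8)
The plan is to prove the two implications separately, exploiting the decomposition $R \cong eR \times (1-e)R$ of an abelian ring relative to a central idempotent $e$ (recall that in an abelian ring every idempotent is central, so $e$ is central and $eR$, $(1-e)R$ are two-sided ideals which are themselves rings with identities $e$ and $1-e$ respectively). The hypotheses $\sigma_i(e)=e$ and $\delta_i(e)=0$ guarantee that $\sigma_i$ and $\delta_i$ restrict to these two corner rings: indeed $\sigma_i(er) = \sigma_i(e)\sigma_i(r) = e\sigma_i(r) \in eR$, and $\delta_i(er) = \sigma_i(e)\delta_i(r) + \delta_i(e)r = e\delta_i(r) \in eR$, and similarly for $1-e$ since $\sigma_i(1-e) = 1-e$ and $\delta_i(1-e) = -\delta_i(e) = 0$. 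Hence it makes sense to speak of $eR$ and $(1-e)R$ as rings carrying the restricted families $\Sigma$ and $\Delta$, and the notion of weak $(\Sigma,\Delta)$-symmetric ideal in the statement is the natural one.

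For the implication $(1)\Rightarrow(2)$: assume $R$ is weak $(\Sigma,\Delta)$-symmetric and take $a,b,c \in eR$ with $abc \in {\rm nil}(R)$. Since $a,b,c$ are in particular elements of $R$, weak $\Sigma$-symmetry of $R$ gives $ac\sigma_i(b) \in {\rm nil}(R)$ and weak $\Delta$-symmetry gives $ac\delta_i(b) \in {\rm nil}(R)$, for every $i$. This already establishes that $eR$ is a weak-symmetric ideal in the sense of the definition; the argument for $(1-e)R$ is identical. Note this direction does not even need the idempotent hypotheses on $\sigma_i,\delta_i$ beyond making the statement well-posed.

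For the converse $(2)\Rightarrow(1)$, which I expect to be the main point: let $a,b,c \in R$ with $abc \in {\rm nil}(R)$. Write $a = ea + (1-e)a$, and similarly for $b,c$; set $a_1 = ea$, $a_2 = (1-e)a$, and likewise $b_1,b_2,c_1,c_2$. Because $e$ is central and $e(1-e)=0$, the product splits as $abc = a_1b_1c_1 + a_2b_2c_2$ with $a_1b_1c_1 \in eR$, $a_2b_2c_2 \in (1-e)R$, and the two summands multiply to zero and are orthogonal; consequently $abc \in {\rm nil}(R)$ forces $a_1b_1c_1 \in {\rm nil}(eR)$ and $a_2b_2c_2 \in {\rm nil}((1-e)R)$ (a sum of two nilpotents lying in orthogonal corners is nilpotent iff each is, and ${\rm nil}(eR) = eR \cap {\rm nil}(R)$). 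Applying the hypothesis that $eR$ and $(1-e)R$ are weak-symmetric ideals, we get $a_1c_1\sigma_i(b_1) \in {\rm nil}(eR)$ and $a_2c_2\sigma_i(b_2) \in {\rm nil}((1-e)R)$ for each $i$, and similarly with $\delta_i$. Now using $\sigma_i(e)=e$, $\sigma_i(1-e)=1-e$ and centrality one checks $ac\sigma_i(b) = a_1c_1\sigma_i(b_1) + a_2c_2\sigma_i(b_2)$ (all cross terms vanish because they contain a factor $e(1-e)=0$), hence $ac\sigma_i(b)$ is a sum of two orthogonal nilpotents and so lies in ${\rm nil}(R)$; the same computation with $\delta_i(e)=0$, $\delta_i(1-e)=0$ yields $ac\delta_i(b) \in {\rm nil}(R)$. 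Therefore $R$ is weak $\Sigma$-symmetric and weak $\Delta$-symmetric, i.e., weak $(\Sigma,\Delta)$-symmetric.

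The only genuinely delicate points are the bookkeeping identities $abc = a_1b_1c_1 + a_2b_2c_2$ and $ac\sigma_i(b) = a_1c_1\sigma_i(b_1) + a_2c_2\sigma_i(b_2)$ together with the claim that a sum of nilpotents sitting in the two orthogonal corners $eR$ and $(1-e)R$ is nilpotent; all three reduce to the orthogonality $e(1-e)=0$ and centrality of $e$, so the proof is routine once the corner decomposition is set up carefully. I would present it in exactly the two-implication format above, spending most of the write-up on the corner-splitting computation in $(2)\Rightarrow(1)$.
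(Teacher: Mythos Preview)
Your proof is correct and follows essentially the same approach as the paper: both use the central idempotent $e$ to split into the corners $eR$ and $(1-e)R$, verify that $\sigma_i$ and $\delta_i$ restrict via $\sigma_i(eb)=e\sigma_i(b)$ and $\delta_i(eb)=e\delta_i(b)$, and then reassemble $ac\sigma_i(b)$ (resp.\ $ac\delta_i(b)$) from its two orthogonal nilpotent pieces. The paper's write-up is slightly terser---it multiplies $abc$ directly by $e$ and $1-e$ rather than introducing the notation $a_1,a_2,\dots$---but the logic is identical.
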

\begin{proof}
We use similar arguments to the established in \cite{OuyangChen2010}, Proposition 3.6. $(1)\Rightarrow (2)$ It is clear. $(2)\Rightarrow (1)$ Consider elements $a, b, c\in R$ with $abc\in {\rm nil}(R)$. It follows that $eaebec, (1-e)a(1-e)b(1-e)c\in {\rm nil}(R)$. By assumption, $eR$ and $(1-e)R$ are weak $(\Sigma,\Delta)$-symmetric ideals, so $eaec\sigma_i(eb)=eac\sigma_i(b)\in {\rm nil}(R)$ and $(1-e)a(1-e)c\sigma_i((1-e)b) = (1-e)ac\sigma_i(b)\in {\rm nil}(R)$. This fact shows that $ac\sigma_i(b)\in {\rm nil}(R)$, for every $i$, and hence $R$ is weak $\Sigma$-symmetric. Now, since for any $r\in R$, $\delta_i(er)=\sigma_i(e)\delta_i(r) + \delta_i(e)r = e\delta_i(r)$, for every $i$, the assumptions on $R$ imply that if $abc\in {\rm nil}(R)$, then $ea(eb)(ec), (1-e)a(1-e)b(1-e)c\in {\rm nil}(R)$. Therefore $eaec\delta_i(eb) = eac\delta_i(b), (1-e)a(1-e)c\delta_i((1-e)b) = (1-e)ac\delta_i(b)\in {\rm nil}(R)$. In this way, $ac\delta_i(b)\in {\rm nil}(R)$, for every $i$, which means that  $R$ is weak $\Delta$-symmetric. In conclusion, $R$ is weak $(\Sigma,\Delta)$-symmetric.
\end{proof}
For the next theorem, Theorem \ref{2010Theorem3.7}, we need some preliminary facts and the Proposition \ref{Lezamaetal2015Proposition2.6} which concerns about quotients of skew PBW extensions: consider $A=\sigma(R)\langle x_1,\dotsc, x_n\rangle$ a skew PBW extension of a ring $R$. Let $\Sigma:=\{\sigma_1,\dotsc, \sigma_n\}$ and $\Delta:=\{\delta_1,\dotsc,\delta_n\}$ such as in Proposition \ref{sigmadefinition}. Following \cite{LezamaAcostaReyes2015}, Section 2, if $I$ is an ideal of $R$, $I$ is called $\Sigma$-invariant ($\Delta$-invariant), if it is invariant under each injective endomorphism $\sigma_i$ ($\sigma_i$-derivation $\delta_i$) of $\Sigma$ ($\Delta$), that is, $\sigma_i(I)\subseteq I$ ($\delta_i(I)\subseteq I$), for every $i$. If $I$ is both $\Sigma$ and $\Delta$-invariant ideal, we say that $I$ is $(\Sigma,\Delta)$-invariant.
\begin{proposition}[\cite{LezamaAcostaReyes2015}, Proposition 2.6]\label{Lezamaetal2015Proposition2.6}
 If $A$ is a skew PBW extension over a ring $R$ and $I$ is a $(\Sigma,\Delta)$-invariant ideal of $R$, then the following statements hold:
\begin{enumerate}
\item [\rm (1)] $IA$ is an ideal of $A$ and $IA\cap R=I$. $IA$ is a proper ideal of $A$ if and only if $I$ is proper in $R$. Moreover, if $\sigma_i$ is bijective and $\sigma_i(I)=I$, for every $i$, then $IA=AI$.
\item [\rm (2)] If $I$ is proper and $\sigma_i(I)=$, for every $1\le i\le n$, then $A/IA$ is a skew PBW extension of $R/I$. In fact, if $I$ is proper and $A$ is bijective, then $A/IA$ is a bijective skew PBW extension of $R/I$.
\end{enumerate}
\end{proposition}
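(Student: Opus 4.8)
The plan is to work throughout with the left free $R$-module decomposition $A=\bigoplus_{\alpha\in\mathbb{N}^{n}}Rx^{\alpha}$ supplied by Definition \ref{gpbwextension}(ii), under which $IA$ is exactly the set of those $f=\sum_{i}a_{i}X_{i}\in A$ all of whose coefficients $a_{i}$ lie in $I$; equivalently $IA=\bigoplus_{\alpha}Ix^{\alpha}$. For part (1), I would first observe that $IA$ is automatically a right ideal: for $a\in I$, a monomial $X$ and $f\in A$, expanding $Xf$ in the basis ${\rm Mon}(A)$ and using that $I$ is a right ideal of $R$ gives $aXf\in IA$. The $(\Sigma,\Delta)$-invariance of $I$ is what makes $IA$ a left ideal: since $A$ is generated as a ring by $R$ and $x_{1},\dots,x_{n}$, it suffices that $x_{k}(aX)\in IA$ for $a\in I$, and $x_{k}(aX)=\sigma_{k}(a)(x_{k}X)+\delta_{k}(a)X$ lands in $IA$ because $\sigma_{k}(a)\in I$ and $\delta_{k}(a)\in I$. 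The equality $IA\cap R=I$ is just the uniqueness of the ${\rm Mon}(A)$-representation (a constant of $A$ lying in $IA$ already lies in $I$), and then $IA=A$ iff $1\in IA\cap R=I$ iff $I=R$, which is the properness claim.

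For the identity $IA=AI$ under the additional hypotheses that $\sigma_{i}$ is bijective and $\sigma_{i}(I)=I$, I would show that the product set $AI$ is likewise a two-sided ideal of $A$. It is trivially a left ideal; to see it is a right ideal one needs $cx^{\gamma}\in AI$ for every $c\in I$ and every monomial $x^{\gamma}$, which follows by writing $x^{\gamma}(\sigma^{\gamma})^{-1}(c)=cx^{\gamma}+(\text{terms of lower degree})$---legitimate because bijectivity together with $\sigma_{i}(I)=I$ force $(\sigma^{\gamma})^{-1}(c)\in I$---and then absorbing the lower-degree correction terms into $AI$ by induction on $|\gamma|$, using Proposition \ref{lindass} and Remark \ref{juradpr} to see that their coefficients are iterated $\sigma_{i}$- and $\delta_{i}$-images of $c$, hence still in $I$. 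Since $IA$ and $AI$ are then two-sided ideals each containing $I$, each one contains the other, so $IA=AI$.

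For part (2), assume $I$ proper and $\sigma_{i}(I)=I$ for all $i$; by (1), $A/IA$ is a ring, and I would verify the four conditions of Definition \ref{gpbwextension}. The inclusion $R/I\subseteq A/IA$ is the injectivity of the natural map $R/I\to A/IA$, which is precisely $IA\cap R=I$; the images $\overline{x^{\alpha}}$ form a left $R/I$-basis of $A/IA$ because $A/IA=\bigoplus_{\alpha}(R/I)\overline{x^{\alpha}}$ from the decomposition above. By $\Sigma$- and $\Delta$-invariance the endomorphisms $\sigma_{i}$ and the derivations $\delta_{i}$ descend to maps $\overline{\sigma_{i}},\overline{\delta_{i}}$ on $R/I$, so the relation $x_{i}r=\sigma_{i}(r)x_{i}+\delta_{i}(r)$ of Proposition \ref{sigmadefinition} descends to $\overline{x_{i}}\,\overline{r}=\overline{\sigma_{i}}(\overline{r})\,\overline{x_{i}}+\overline{\delta_{i}}(\overline{r})$; here the new structure constant is $\overline{\sigma_{i}}(\overline{r})$, and it is nonzero whenever $\overline{r}\neq0$ because $\sigma_{i}$ injective together with $\sigma_{i}(I)=I$ give $\sigma_{i}^{-1}(I)=I$, i.e.\ $\overline{\sigma_{i}}$ is injective. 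Similarly condition (iv) descends, with structure constant $\overline{c_{i,j}}$, which is nonzero because $c_{i,j}$ is left invertible by Proposition \ref{coefficientes}(2) and $I$ is proper. Hence $A/IA=\sigma(R/I)\langle\overline{x_{1}},\dots,\overline{x_{n}}\rangle$. If moreover $A$ is bijective, then each $\overline{\sigma_{i}}$ is bijective (surjective since $\sigma_{i}$ is and $\sigma_{i}(I)=I$, injective as above) and each $\overline{c_{i,j}}$ is the image of a unit of $R$, hence a unit of $R/I$, so $A/IA$ is a bijective skew PBW extension of $R/I$.

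The step I expect to be the main obstacle is the $IA=AI$ claim: one must keep careful track, via the expansion of Proposition \ref{lindass}, that commuting a monomial past an element of $I$ produces only coefficients that are compositions of $\sigma_{i}$'s and $\delta_{i}$'s applied to that element---hence stay in $I$---and organize the induction on the degree of the monomial so that every lower-degree correction term is absorbed into $AI$. Everything else is a routine matter of using the left-module decomposition and the invariance of $I$ systematically.
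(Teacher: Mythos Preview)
The paper does not supply its own proof of this proposition; it is simply quoted from \cite{LezamaAcostaReyes2015}, Proposition 2.6, and then used in Theorem \ref{2010Theorem3.7}. So there is nothing in the paper to compare your argument against.

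That said, your proof is correct in all essentials. One small simplification in the $IA=AI$ step: you do not actually need to invoke Proposition \ref{lindass} to see that the correction polynomial $p_{\gamma,(\sigma^{\gamma})^{-1}(c)}$ has all its coefficients in $I$. You have already established that $IA=\bigoplus_{\alpha}Ix^{\alpha}$ is a two-sided ideal of $A$, so $x^{\gamma}(\sigma^{\gamma})^{-1}(c)\in A I\subseteq IA$; subtracting the top term $c\,x^{\gamma}$ (whose coefficient $c$ is in $I$) leaves $p_{\gamma,(\sigma^{\gamma})^{-1}(c)}\in IA$, i.e.\ every coefficient of this lower-degree remainder already lies in $I$. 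Your induction on $|\gamma|$ then finishes the argument cleanly. Alternatively, and even more directly, one can peel off one variable at a time: writing $X=x_{k}X'$ with $X'$ a standard monomial of degree $|X|-1$, the identity $cx_{k}=x_{k}\sigma_{k}^{-1}(c)-\delta_{k}(\sigma_{k}^{-1}(c))$ (both new coefficients in $I$) together with the induction hypothesis applied to $X'$ gives $cX\in AI$ immediately.
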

From Proposition \ref{Lezamaetal2015Proposition2.6} we can see that if $I$ is $(\Sigma,\Delta)$-invariant, then over $\overline{R}:=R/I$ it is induced a systems $(\overline{\Sigma},\overline{\Delta})$ of endomorphisms $\overline{\Sigma}$ and $\overline{\Sigma}$-derivations $\overline{\Delta}$ defined by $\overline{\sigma_i}(r+I))=\sigma_i(r)+I$ and $\overline{\delta_i}(r+I) = \delta_i(r)+I$, for $1\le i\le n$. We keep the variables $x_1,\dotsc,x_n$ of the extension $A$ to the extension $A/IA$ if no confusion arises. For quotients of skew PBW extensions, we consider the notion of weak $(\Sigma,\Delta)$-symmetric in the natural way following Definition \ref{2010Definition2}.\\

Our next theorem extends \cite{OuyangChen2010}, Theorem 3.7.
\begin{theorem}\label{2010Theorem3.7}
Let $I$ be an $(\Sigma,\Delta)$-invariant and weak $(\Sigma,\Delta)$-symmetric ideal of $R$. If $I\subseteq {\rm nil}(R)$, then $R/I$ is a weak $(\overline{\Sigma}, \overline{\Delta})$-symmetric ring if and only if $R$ is a weak $(\Sigma,\Delta)$-symmetric ring.
\end{theorem}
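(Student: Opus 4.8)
The plan is to prove both implications directly from the definitions of weak $\Sigma$-symmetric and weak $\Delta$-symmetric (Definition \ref{2010Definition2}), using the hypothesis $I\subseteq {\rm nil}(R)$ together with the fact that $I$ is $(\Sigma,\Delta)$-invariant (so that $\overline{\Sigma}$ and $\overline{\Delta}$ are well defined on $\overline{R}:=R/I$ as in the discussion following Proposition \ref{Lezamaetal2015Proposition2.6}). The key elementary observation I would isolate first is the following: since $I\subseteq {\rm nil}(R)$, an element $r\in R$ satisfies $\overline{r}\in {\rm nil}(\overline{R})$ if and only if $r\in {\rm nil}(R)$. Indeed, if $\overline{r}^{\,k}=\overline{0}$ then $r^{k}\in I\subseteq {\rm nil}(R)$, so $r^{k}$ is nilpotent and hence so is $r$; the converse is immediate because the quotient map is a ring homomorphism. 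This equivalence is what lets one pass membership in the nilradical back and forth across the quotient, and it is the only place the hypothesis $I\subseteq {\rm nil}(R)$ is really used.

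For the implication ($R$ weak $(\Sigma,\Delta)$-symmetric $\Rightarrow$ $R/I$ weak $(\overline\Sigma,\overline\Delta)$-symmetric), take $\overline{a},\overline{b},\overline{c}\in\overline{R}$ with $\overline{a}\,\overline{b}\,\overline{c}\in{\rm nil}(\overline{R})$. Lifting to representatives $a,b,c\in R$, the observation above gives $abc\in{\rm nil}(R)$. Since $R$ is weak $\Sigma$-symmetric, $ac\sigma_i(b)\in{\rm nil}(R)$ for every $i$, and since $R$ is weak $\Delta$-symmetric, $ac\delta_i(b)\in{\rm nil}(R)$ for every $i$. Projecting and using $\overline{\sigma_i}(\overline{b})=\overline{\sigma_i(b)}$ and $\overline{\delta_i}(\overline{b})=\overline{\delta_i(b)}$ (the defining formulas for the induced systems), we get $\overline{a}\,\overline{c}\,\overline{\sigma_i}(\overline{b})\in{\rm nil}(\overline{R})$ and $\overline{a}\,\overline{c}\,\overline{\delta_i}(\overline{b})\in{\rm nil}(\overline{R})$, which is exactly what is required for $\overline{R}$ to be weak $(\overline\Sigma,\overline\Delta)$-symmetric.

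For the converse ($R/I$ weak $(\overline\Sigma,\overline\Delta)$-symmetric $\Rightarrow$ $R$ weak $(\Sigma,\Delta)$-symmetric), take $a,b,c\in R$ with $abc\in{\rm nil}(R)$; then $\overline{a}\,\overline{b}\,\overline{c}\in{\rm nil}(\overline{R})$, and by hypothesis $\overline{a}\,\overline{c}\,\overline{\sigma_i}(\overline{b})=\overline{ac\sigma_i(b)}\in{\rm nil}(\overline{R})$ and $\overline{ac\delta_i(b)}\in{\rm nil}(\overline{R})$ for every $i$. By the observation this forces $ac\sigma_i(b)\in{\rm nil}(R)$ and $ac\delta_i(b)\in{\rm nil}(R)$ for every $i$. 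Hence $R$ is weak $\Sigma$-symmetric and weak $\Delta$-symmetric, i.e.\ weak $(\Sigma,\Delta)$-symmetric. This is exactly the path taken in \cite{OuyangChen2010}, Theorem 3.7, transposed to the family setting; the hypothesis that $I$ is itself a weak $(\Sigma,\Delta)$-symmetric ideal is not needed for the argument as sketched, but one should double-check whether it is used to ensure the quotient systems behave well, and I expect the only genuinely delicate point — and the main thing to verify carefully — is precisely the nilpotency transfer lemma $\overline{r}\in{\rm nil}(\overline{R})\iff r\in{\rm nil}(R)$ under $I\subseteq{\rm nil}(R)$, everything else being a routine diagram-chase through the quotient homomorphism.
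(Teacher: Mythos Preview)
Your proof is correct and follows essentially the same route as the paper's: both directions unwind the definitions via the observation that, under $I\subseteq{\rm nil}(R)$, an element $r\in R$ is nilpotent if and only if $\overline{r}\in{\rm nil}(R/I)$, and then one simply transports the weak $(\Sigma,\Delta)$-symmetry condition across the quotient map using $\overline{\sigma_i}(\overline b)=\overline{\sigma_i(b)}$ and $\overline{\delta_i}(\overline b)=\overline{\delta_i(b)}$. Your remark that the hypothesis ``$I$ is a weak $(\Sigma,\Delta)$-symmetric ideal'' is not actually used is also correct---the paper's own proof never invokes it either.
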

\begin{proof}
Consider elements $a, b, c\in R$ such that $(a+I)(b+I)(c+I)\in {\rm nil}(R/I)$. There exists a positive integer $m$ with $(abc)^{m}\in I$. Since $I\subseteq {\rm nil}(R)$ it follows that $abc\in {\rm nil}(R)$. By assumption, $R$ is weak $(\Sigma,\Delta)$-symmetric, so $ac\sigma_i(b), ac\delta_i(b) \in {\rm nil}(R)$, for $i=1,\dotsc, n$. Hence $(a+I)(c+I)(\sigma_i(b)+I), (a+I)(c+I)(\delta_i(b)+I) \in {\rm nil}(R/I)$, that is, $(a+I)(c+I)\overline{\sigma_i}(b+I), (a+I)(c+I)\overline{\delta_i}(b+I)\in {\rm nil}(R/I)$. Therefore $R/I$ is weak $(\overline{\Sigma}, \overline{\Delta})$-symmetric.

Conversely, suppose that $R/I$ is a weak $(\Sigma,\Delta)$-symmetric ring. Consider elements $a, b, c\in R$ with $abc\in {\rm nil}(R)$. It is clear that $(a+I)(b+I)(c+I)\in {\rm nil}(R/I)$. Since $R/I$ is weak $(\overline{\Sigma}, \overline{\Delta})$-symmetric, we have that $(a+I)(c+I)(\sigma_i(b) + I) = (ac\sigma_i(b) + I),  (a+I)(c+I)(\delta_i(b) + I) = (ac\delta_i(b) + I) \in {\rm nil}(R/I)$, for $i=1,\dotsc, n$. This means that for every $i$ there exist positive integers $p = p(i), q = q(i)$ depending on $i$, such that $(ac\sigma_i(b))^{p}, (ac\delta_i(b))^{q}\in I$. In this way, $ac\sigma_i(b), ac\delta_i(b)\in I$ because $I\subseteq {\rm nil}(R)$ which shows that $R$ is a weak $(\overline{\Sigma},\overline{\Delta})$-symmetric ring.
\end{proof}
The next theorem generalizes Ouyang and Chen \cite{OuyangChen2010}, Theorem 3.9.
\begin{theorem}\label{2010Theorem 3.9}
If $R$ is a $(\Sigma,\Delta)$-compatible and reversible ring, then $R$ is a weak $(\Sigma,\Delta)$-symmetric ring if and only if $A$ is a weak $(\overline{\Sigma}, \overline{\Delta})$-symmetric ring, where the sets of injective endomorphisms $\overline{\Sigma}$ and $\overline{\Sigma}$-derivations $\overline{\Delta}$ of $A$ are as in Proposition \ref{ReyesSuarez2017CliffordTheorem5.1}.
\end{theorem}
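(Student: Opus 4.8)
The plan is to mimic the strategy used for Theorem \ref{2010Theorem 2.12}, exploiting the translation of the weak $(\Sigma,\Delta)$-symmetry condition into a condition on coefficients via the nilpotency characterization of Theorem \ref{2010Lemma2.10} and the coefficient-wise criterion of Theorem \ref{2010Theorem2.11}. Since a subring $S$ of a weak $(\overline{\Sigma},\overline{\Delta})$-symmetric ring with $\overline{\sigma_i}(S)\subseteq S$ and $\overline{\delta_i}(S)\subseteq S$ is again weak $(\overline{\Sigma},\overline{\Delta})$-symmetric — and $R$ sits inside $A$ with $\overline{\sigma_i}|_R=\sigma_i$, $\overline{\delta_i}|_R=\delta_i$ — one implication is immediate. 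So I would concentrate on showing that if $R$ is weak $(\Sigma,\Delta)$-symmetric, then $A$ is weak $(\overline{\Sigma},\overline{\Delta})$-symmetric.

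First I would fix $f=\sum_{i=0}^{s}a_iX_i$, $g=\sum_{j=0}^{t}b_jY_j$, $h=\sum_{k=0}^{l}c_kZ_k$ in $A$ with $fgh\in {\rm nil}(A)$, and aim to prove $fh\,\overline{\sigma_r}(g)\in {\rm nil}(A)$ and $fh\,\overline{\delta_r}(g)\in {\rm nil}(A)$ for every $r$. By Theorem \ref{2010Theorem2.11}(3), $fgh\in {\rm nil}(A)$ forces $a_ib_jc_k\in {\rm nil}(R)$ for all $i,j,k$. Since $R$ is weak $(\Sigma,\Delta)$-symmetric (applied to the triple $a_i,\ b_j,\ c_k$, then using reversibility/semicommutativity to move factors as needed), I get $a_ic_k\sigma_r(b_j)\in{\rm nil}(R)$ and $a_ic_k\delta_r(b_j)\in{\rm nil}(R)$ for all $i,j,k,r$. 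Now $\overline{\sigma_r}(g)=\sum_j\sigma_r(b_j)Y_j$ and $\overline{\delta_r}(g)=\sum_j\delta_r(b_j)Y_j$ are genuine elements of $A$ (Proposition \ref{ReyesSuarez2017CliffordTheorem5.1}), so applying the converse direction of Theorem \ref{2010Theorem2.11}(3) to the triple $f$, $\overline{\sigma_r}(g)$, $h$ — whose coefficient-products are exactly the $a_i\,\sigma_r(b_j)\,c_k$, and these lie in ${\rm nil}(R)$ after a reversibility shuffle from $a_ic_k\sigma_r(b_j)\in{\rm nil}(R)$ — yields $f\,\overline{\sigma_r}(g)\,h\in{\rm nil}(A)$, hence $fh\,\overline{\sigma_r}(g)\in{\rm nil}(A)$ by one more application of the weak symmetry machinery at the level of $A$ (Theorem \ref{2010Theorem 2.12} tells us $A$ is weak symmetric, so $f\,\overline{\sigma_r}(g)\,h\in{\rm nil}(A)\Rightarrow fh\,\overline{\sigma_r}(g)\in{\rm nil}(A)$). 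The same argument with $\delta_r$ in place of $\sigma_r$ gives $fh\,\overline{\delta_r}(g)\in{\rm nil}(A)$.

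The main obstacle I anticipate is bookkeeping the order of factors: the definition of weak $(\Sigma,\Delta)$-symmetry only moves the middle factor $b$ past $c$ and simultaneously applies $\sigma_r$ or $\delta_r$ to it, so getting from $a_ib_jc_k\in{\rm nil}(R)$ to a product in which $\sigma_r(b_j)$ (resp. $\delta_r(b_j)$) sits in the position required by Theorem \ref{2010Theorem2.11}(3) must be done carefully, using reversibility of $R$ (hence semicommutativity, hence ${\rm nil}(R)$ being an ideal by \cite{LiuZhao2006}, Lemma 3.1) to commute nilpotent products freely. A secondary point to check is that $\overline{\Sigma}$-compatibility and $\overline{\Delta}$-compatibility are not actually needed here beyond what Proposition \ref{ReyesSuarez2017CliffordTheorem5.1} guarantees about $\overline{\sigma_r},\overline{\delta_r}$ being a well-defined endomorphism and derivation of $A$; the nilpotency transfer is already handled by Theorems \ref{2010Lemma2.10} and \ref{2010Theorem2.11}, which only require $R$ to be reversible and $(\Sigma,\Delta)$-compatible — both hypotheses of the present theorem. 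Finally, I would note that the converse implication follows from the subring remark together with the fact that $A$ weak $(\overline{\Sigma},\overline{\Delta})$-symmetric restricts to $R$ weak $(\Sigma,\Delta)$-symmetric, closing the equivalence.
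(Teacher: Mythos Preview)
Your proposal is correct and follows the same core strategy as the paper: reduce $fgh\in{\rm nil}(A)$ to $a_ib_jc_k\in{\rm nil}(R)$ via Theorem \ref{2010Theorem2.11}(3), apply weak $(\Sigma,\Delta)$-symmetry of $R$ to get $a_ic_k\sigma_l(b_j),\,a_ic_k\delta_l(b_j)\in{\rm nil}(R)$, and then lift back to $A$. The only difference is that you take an unnecessary detour through the triple $(f,\overline{\sigma_r}(g),h)$ --- which forces the ``reversibility shuffle'' you worry about and an appeal to Theorem \ref{2010Theorem 2.12} --- whereas the paper simply applies Theorem \ref{2010Theorem2.11}(3) directly to the triple $(f,h,\overline{\sigma_l}(g))$ in that order, so the required coefficient condition is \emph{exactly} $a_ic_k\sigma_l(b_j)\in{\rm nil}(R)$ with no rearrangement needed. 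Your anticipated obstacle therefore dissolves once you pick the right ordering of the three factors.
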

\begin{proof}
If $A$ is a weak $(\overline{\Sigma}, \overline{\Delta})$-symmetric ring, then it is clear that $R$ is weak $(\Sigma,\Delta)$-symmetric ring because $\sigma_i(R), \delta_i(R) \subseteq R$, for every $i=1,\dotsc, n$.

Conversely, suppose that $R$ is weak $(\Sigma,\Delta)$-symmetric ring. Consider the elements $f=\sum_{i=0}^{s} a_iX_i, g=\sum_{j=0}^{t} b_jY_j$ and $h=\sum_{k=0}^{l}c_kZ_k$ of $A$. From Theorem \ref{2010Theorem2.11} we know that $a_ib_jc_k\in {\rm nil}(R)$, for all $i, j, k$, whence $a_ic_k\sigma_l(b_j), a_ic_k\delta_l(b_j)\in {\rm nil}(R)$, for $l=1,\dotsc, n$, since $R$ is weak $(\Sigma,\Delta)$-symmetric. Again, Theorem \ref{2010Theorem2.11} implies that $fh\overline{\sigma_i}(g), fh\overline{\delta_i}(g)\in {\rm nil}(A)$, that is, $A$ is a weak $(\Sigma,\Delta)$-symmetric ring.
\end{proof}
\begin{corollary}[\cite{OuyangChen2010}, Corollary 3.10]
Let $R$ be a reversible ring. Then $R$ is a weak symmetric ring if and only if $R[t]$ is weak symmetric.
\end{corollary}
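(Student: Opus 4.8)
The plan is to obtain the corollary as the one-variable, untwisted specialization of Theorem \ref{2010Theorem 3.9}. Write $A:=R[t]$ and view it as the skew PBW extension $\sigma(R)\langle t\rangle$ arising from the Ore extension $R[t;{\rm id}_R,0]$; by Example \ref{mentioned} this is a skew PBW extension of $R$ in the single variable $t$, with associated family of endomorphisms $\Sigma=\{\sigma_1\}$, $\sigma_1={\rm id}_R$, associated family of derivations $\Delta=\{\delta_1\}$, $\delta_1=0$, and with the only structure constant $c_{1,1}=1$ (forced by condition (iv) of Definition \ref{gpbwextension} when $n=1$), which is in particular central in $R$.

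Next I would verify the hypotheses of Theorem \ref{2010Theorem 3.9} for this $A$. Reversibility of $R$ is assumed. For $(\Sigma,\Delta)$-compatibility in the sense of Definition \ref{Definition3.52008}: since $\sigma^{\alpha}={\rm id}_R$ for every $\alpha$, the equivalence $a\sigma^{\alpha}(b)=0\Leftrightarrow ab=0$ is trivial, so $R$ is $\Sigma$-compatible; and since $\delta^{\beta}=0$ for every $\beta\neq 0$, the implication $ab=0\Rightarrow a\delta^{\beta}(b)=0$ is trivial, so $R$ is $\Delta$-compatible. Likewise the commutation and vanishing conditions demanded in Proposition \ref{ReyesSuarez2017CliffordTheorem5.1} --- namely $\sigma_i\delta_j=\delta_j\sigma_i$, $\delta_i\delta_j=\delta_j\delta_i$, and $\delta_k$ annihilating the structure constants --- all hold because $\delta_1=0$; hence the extended families on $A$ furnished by that proposition satisfy $\overline{\sigma_1}={\rm id}_A$ and $\overline{\delta_1}=0$.

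Finally I would observe that with $\Sigma=\{{\rm id}_R\}$ and $\Delta=\{0\}$ the conditions of Definition \ref{2010Definition2} degenerate: weak $\Delta$-symmetry, $abc\in{\rm nil}(R)\Rightarrow ac\,\delta_1(b)=0\in{\rm nil}(R)$, is automatic, while weak $\Sigma$-symmetry reads $abc\in{\rm nil}(R)\Rightarrow ac\,\sigma_1(b)=acb\in{\rm nil}(R)$, i.e. it says precisely that $R$ is weak symmetric; and applying the same degeneration to $A$ with $\overline{\Sigma}=\{{\rm id}_A\}$, $\overline{\Delta}=\{0\}$ shows that $A$ is weak $(\overline{\Sigma},\overline{\Delta})$-symmetric if and only if $A=R[t]$ is weak symmetric. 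Feeding these two identifications into Theorem \ref{2010Theorem 3.9} yields the asserted equivalence. I do not anticipate a genuine obstacle: the whole argument is a matter of checking that every piece of twisting data is trivial and that the $\delta$-part of each hypothesis and of the conclusion collapses; the only point meriting care is the degeneration of the $(\overline{\Sigma},\overline{\Delta})$-symmetry notion to ordinary weak symmetry. One could equally well bypass Proposition \ref{ReyesSuarez2017CliffordTheorem5.1} and read the corollary off directly from Theorem \ref{2010Theorem 2.12}, which is already phrased in terms of ordinary weak symmetry, applied to the evidently $(\Sigma,\Delta)$-compatible reversible ring $R$ and its skew PBW extension $R[t]$.
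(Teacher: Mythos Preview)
Your proposal is correct and matches the paper's intended argument: the corollary is stated immediately after Theorem \ref{2010Theorem 3.9} without proof, so the implicit derivation is exactly the trivial specialization $\Sigma=\{{\rm id}_R\}$, $\Delta=\{0\}$ that you carry out, together with the observation that in this case weak $(\Sigma,\Delta)$-symmetry collapses to ordinary weak symmetry on both $R$ and $A=R[t]$. Your closing remark that Theorem \ref{2010Theorem 2.12} already gives the result directly is also valid and arguably cleaner, since it avoids the detour through Proposition \ref{ReyesSuarez2017CliffordTheorem5.1} and Definition \ref{2010Definition2}.
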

\section{Examples}\label{examplespaper}
Remarkable examples of skew PBW extensions over $(\Sigma,\Delta)$-compatible and reversible rings can be found in \cite{JaramilloReyes2018, ReyesPhD2013, ReyesYesica2018, SuarezReyesgenerKoszul2017}. In this way, the results obtained in Sections \ref{weaksymmetricskewPBWextensions} and \ref{SigmaDeltaweaksymmetricskewPBWextensions} can be illustrated with every one of these noncommutative rings. Let us just say some of these examples.\\

If $A$ is a skew PBW extension over a ring $R$ where the coefficients commute with the variables, that is, $x_ir = rx_i$, for every $r\in R$ and each $i=1,\dotsc, n$, or equivalently, $\sigma_i = {\rm id}_R$ and $\delta_i = 0$, for every $i$ (these extensions were called {\em constant} in \cite{Suarez2017}, Definition 2.5 (a)), then it is clear that $R$ is a $\Sigma$-rigid ring. Some examples of these  extensions are the following: (i) PBW extensions defined by Bell and Goodearl (which include the classical commutative polynomial rings, universal enveloping algebra of a Lie algebra, and others); some operator algebras (for example, the algebra of linear partial differential operators, the algebra of linear partial shift operators, the algebra of linear partial difference operators, the algebra of linear partial $q$-dilation operators, and the algebra of linear partial q-differential operators). (ii) solvable polynomial rings introduced by Kandri-Rody and Weispfenning. (iii) $G$-algebras introduced by Apel. (iv) PBW algebras defined by Bueso et. al. (v) Calabi-Yau and skew Calabi-Yau algebras. (vi) Koszul and qudratic algebras. $G$-algebras studied by Levandovskyy. (vii) PBW algebras defined by Bueso et al. in \cite{BuesoTorrecillasVerschoren}. A detailed reference of every one of these algebras can be found in \cite{LezamaReyes2014, Suarez2017, SuarezLezamaReyes2017,  SuarezReyesgenerKoszul2017}. Of course, we also encounter examples of skew PBW extensions which are not constant (see \cite{LezamaReyes2014} for the definition of each one of these algebras): the quantum plane $\cO_q(\Bbbk^{2})$; the Jordan plane; the algebra of $q$-differential operators $D_{q,h}[x,y]$; the mixed algebra $D_h$; the operator differential rings; the algebra of differential operators $D_{\bf q}(S_{\bf q})$ on a quantum space ${S_{\bf q}}$; and the family of Ore extensions studied in \cite{ArtamonovLezamaFajardo2016}.\\

Following Rosenberg \cite{Rosenberg1995}, Definition C4.3, a {\em 3-dimensional skew polynomial algebra $\cA$} is a
$\Bbbk$-algebra generated by the variables $x,y,z$ restricted to relations $
yz-\alpha zy=\lambda,\ zx-\beta xz=\mu$, and $xy-\gamma
yx=\nu$, such that the following conditions hold:
\begin{enumerate}
\item [\rm (1)] $\lambda, \mu, \nu\in \Bbbk+\Bbbk x+\Bbbk y+\Bbbk z$, and $\alpha, \beta, \gamma \in \Bbbk^{*}$;
\item [\rm (2)] Standard monomials $\{x^iy^jz^l\mid i,j,l\ge 0\}$ are a $\Bbbk$-basis of the algebra.
\end{enumerate}
3-dimensional skew polynomial algebras are very important in noncommutative algebraic geometry. Now, from the definition it is clear that these algebras are skew PBW extensions (as a matter of fact, in \cite{ReyesSuarez2017FEJM} the authors  proved algorithmically that 3-dimensional skew polynomial algebras are examples of skew PBW extensions).

There exists a classification of 3-dimensional skew polynomial algebras, see \cite{Rosenberg1995}, Theorem C.4.3.1. More precisely, if $\cA$ is a 3-dimensional skew polynomial algebra, then $\cA$
is one of the following algebras:
\begin{enumerate}
\item [\rm (a)] if $|\{\alpha, \beta, \gamma\}|=3$, then $\cA$ is defined by the relations $yz-\alpha zy=0,\ zx-\beta xz=0,\ xy-\gamma yx=0$.
\item [\rm (b)] if $|\{\alpha, \beta, \gamma\}|=2$ and $\beta\neq \alpha =\gamma =1$, then $\cA$ is one of the following algebras:
\begin{enumerate}
\item [\rm (i)] $yz-zy=z,\ \ \ zx-\beta xz=y,\ \ \ xy-yx=x${\rm ;}
\item [\rm (ii)] $yz-zy=z,\ \ \ zx-\beta xz=b,\ \ \ xy-yx=x${\rm ;}
\item [\rm (iii)] $yz-zy=0,\ \ \ zx-\beta xz=y,\ \ \ xy-yx=0${\rm ;}
\item [\rm (iv)] $yz-zy=0,\ \ \ zx-\beta xz=b,\ \ \ xy-yx=0${\rm ;}
\item [\rm (v)] $yz-zy=az,\ \ \ zx-\beta xz=0,\ \ \ xy-yx=x${\rm ;}
\item [\rm (vi)] $yz-zy=z,\ \ \ zx-\beta xz=0,\ \ \ xy-yx=0$,
\end{enumerate}
where $a, b$ are any elements of $\Bbbk$. All nonzero values of $b$
give isomorphic algebras.
\item [\rm (c)] If $|\{\alpha, \beta, \gamma\}|=2$ and $\beta\neq \alpha=\gamma\neq 1$, then $\cA$ is one of the following algebras:
\begin{enumerate}
\item [\rm (i)] $yz-\alpha zy=0,\ \ \ zx-\beta xz=y+b,\ \ \ xy-\alpha yx=0${\rm ;}
\item [\rm (ii)] $yz-\alpha zy=0,\ \ \ zx-\beta xz=b,\ \ \ xy-\alpha yx=0$.
\end{enumerate}
In this case, $b$ is an arbitrary element of $\Bbbk$. Again, any
nonzero values of $b$ give isomorphic algebras.
\item [\rm (d)] If $\alpha=\beta=\gamma\neq 1$, then $\cA$ is the algebra defined by the relations  $yz-\alpha zy=a_1x+b_1,\ zx-\alpha xz=a_2y+b_2,\ xy-\alpha yx=a_3z+b_3$. If $a_i=0\ (i=1,2,3)$, then all nonzero values of $b_i$ give isomorphic
algebras.
\item [\rm (e)] If $\alpha=\beta=\gamma=1$, then $\cA$ is isomorphic to one of the following algebras:
\begin{enumerate}
\item [\rm (i)] $yz-zy=x,\ \ \ zx-xz=y,\ \ \ xy-yx=z${\rm ;}
\item [\rm (ii)] $yz-zy=0,\ \ \ zx-xz=0,\ \ \ xy-yx=z${\rm ;}
\item [\rm (iii)] $yz-zy=0,\ \ \ zx-xz=0,\ \ \ xy-yx=b${\rm ;}
\item [\rm (iv)] $yz-zy=-y,\ \ \ zx-xz=x+y,\ \ \ xy-yx=0${\rm ;}
\item [\rm (v)] $yz-zy=az,\ \ \ zx-xz=z,\ \ \ xy-yx=0${\rm ;}
\end{enumerate}
Parameters $a,b\in \Bbbk$ are arbitrary,  and all nonzero values of
$b$ generate isomorphic algebras.
\end{enumerate}

\vspace{0.5cm}

\noindent {\bf \Large{Acknowledgements}}

\vspace{0.5cm}

The first author was supported by the research fund of Facultad de Ciencias, Universidad Nacional de Colombia, Bogot\'a, Colombia, HERMES CODE 41535.


\end{document}